\newtheorem{theorem}{Theorem}[section]
\newtheorem{lemma}[theorem]{Lemma}
\newtheorem{corollary}[theorem]{Corollary}
\newtheorem{proposition}[theorem]{Proposition}
\newtheorem*{claim*}{Claim}
\theoremstyle{definition}
\newtheorem{remark}[theorem]{Remark}
\newtheorem{definition}[theorem]{Definition}
\newtheorem{example}[theorem]{Example}
\newtheorem{exs}[theorem]{Examples}
\newcommand{\mc}[1]{\mathcal{#1}}
\newcommand{\Z}{\mathbb Z}
\newcommand{\N}{\mathbb N}
\newcommand{\R}{\mathbb R}
\newcommand{\push}{h}
\newcommand{\mpush}{x}
\newcommand{\connsumv}{\underset{v\in V(\Gamma)}{\#} }
\renewcommand{\>}{\rangle}
\newcommand{\arr}{\rightarrow}
\newcommand{\inv}{^{-1}}
\newcommand{\familya}{\mathcal B}
\newcommand{\familyb}{\mathcal C}
\DeclareMathOperator{\Aut}{Aut}
\DeclareMathOperator{\Homeo}{Homeo}
\DeclareMathOperator{\Diffeo}{Diffeo}
\DeclareMathOperator{\supp}{supp}
\newcommand{\map}{\operatorname{Map}}
\newcommand{\mapS}{\operatorname{Map}(S)}
\newcommand{\pmapS}{\operatorname{PMap}(S)}
\newcommand{\pmap}{\operatorname{PMap}}
\newcommand{\mapcS}{\operatorname{Map}_c(S)}
\def\G{{\Gamma}}
\title[Subgroups of Big Mapping Class Groups]{A New Construction of Subgroups of \\ Big Mapping Class Groups}
\author[C.R. Abbott]{Carolyn R. Abbott}
\address{Department of Mathematics, Mailstop 050, Brandeis University, 415 South Street, Waltham, MA 02453}
\email{carolynabbott@brandeis.edu}
\author[H. Hoganson]{Hannah Hoganson}
\address{Department of Mathematics, William E. Kirwan Hall, 4176 Campus Dr., University of Maryland College Park, MD 20742-4015}
\email{hoganson@umd.edu} 
\author[M. Loving]{Marissa Loving}
 \address{Department of Mathematics, 480 Lincoln Drive, Van Vleck Hall, Madison, WI 53706}
 \email{mloving2@wisc.edu}
\author[P. Patel]{Priyam Patel}
 \address{Department of Mathematics, John A. Widtsoe Building, University of Utah, 155 S 1400 E, Salt Lake City, UT 84112}
 \email{patelp@math.utah.edu} 
\author[R. Skipper]{Rachel Skipper}
 \address{Department of Mathematics, John A. Widtsoe Building, University of Utah, 155 S 1400 E, Salt Lake City, UT 84112}
 \email{rachel.skipper@utah.edu}
\date{\today}
\subjclass[2020]{Primary 20F65;   
                 Secondary 57M07} 
\keywords{Mapping class groups, infinite type surfaces, subgroups}
\begin{document}

\begin{abstract} 
We explicitly construct new subgroups of the mapping class groups of an uncountable collection of infinite-type surfaces, including, but not limited to, free groups, Baumslag-Solitar groups, mapping class groups of other surfaces, and a large collection of wreath products. For each such subgroup $H$ and surface $S$, we show that there are countably many non-conjugate embeddings of $H$ into $\mapS$; in certain cases, there are uncountably many such embeddings. The images of each of these embeddings cannot lie in the isometry group of $S$ for any hyperbolic metric and are not contained in the closure of the compactly supported subgroup of $\mapS$. In this sense, our construction is new and does not rely on previously known techniques for constructing subgroups of mapping class groups. Notably, our embeddings of $\map(S')$ into $\map(S)$ are not induced by embeddings of $S'$ into $S$. Our main tool for all of these constructions is the utilization of special homeomorphisms of $S$ called shift maps, and more generally, multipush maps. 
\end{abstract} 

\maketitle

\section{Introduction}\label{S:intro}

A fundamental question in low-dimensional topology asks which groups can arise as subgroups of the diffeomorphism group, homeomorphism group, and mapping class group of a surface $S$, denoted by $\Homeo(S)$, $\Diffeo(S),$ and $\mapS$, respectively. One approach to producing such subgroups is to consider embeddings of finite-type subsurfaces $S'$ into an infinite-type surface $S$  that induce injections of $\map(S')$  into $\map(S)$. In this case, every subgroup of $\map(S')$ is a subgroup of $\mapS$. Another approach to this problem is to show that a particular group $G$ acts by orientation-preserving isometries on a surface $S$, which implies that $G$ can be realized as a subgroup of $\Homeo(S)$, $\Diffeo(S),$ and $\mapS$. 
However, these two classical approaches have limitations. For example, the strong Tits alternative holds for finite-type mapping class groups, meaning every subgroup of $\map(S')$ is either virtually abelian or contains a free subgroup \cite{Ivanov, McCarthy}. In addition, Aougab, Patel, and Vlamis show that only finite groups can arise as the isometry group of a hyperbolic metric on $S$ whenever $S$ contains a \emph{non-displaceable subsurface} (see \cite[Lemma 4.2]{APV2}). They also show that no uncountable group can be obtained as the isometry group of a hyperbolic metric on any infinite-type surface. These observations indicate that in order to fully understand the algebraic structure of big mapping class groups, we  need other constructions of subgroups in $\mapS$, and we also need to understand the many different ways that a particular subgroup can embed in $\mapS$. This is precisely the goal of this paper. 

To streamline the statements of our results below, we construct an uncountable collection of surfaces for which particular results  hold. The precise definitions will appear in \Cref{sec:non-conjugate}. When $\Pi$ is a \textit{distinguished surface}, we denote by $\familyb(\Pi)$ the collection of surfaces that admit a  map which acts as a shift along a countable collection of copies of $\Pi$; see \Cref{def:familyb}. For example, in the case where $\Pi$ is a torus with one boundary component, $\familyb(\Pi)$ includes the ladder surface, the connect sum of a ladder surface and any surface of genus 0, and the Cantor tree surface with exactly two ends accumulated by genus.

Our first construction produces embeddings between big mapping class groups that are not induced by embeddings of the underlying surfaces and that do not preserve the property of being compactly supported. The result is summarized as \Cref{thm:indicable} below and is a consequence of \Cref{T:otherGs}.
Recall that a group is  indicable if it admits a surjection onto $\Z$. 
 
 \begin{restatable}{theorem}{MCG}
\label{thm:indicable}
Let $\Pi$ be a distinguished surface.  If  $\map(\Pi)$ is indicable, then for any surface $S \in \familyb(\Pi)$, there exist countably many non-conjugate embeddings of $\map(\Pi)$ into $\mapS$ that are not induced by an embedding of $\Pi$ into $S$.
\end{restatable}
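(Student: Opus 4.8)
The plan is to build the embeddings $\map(\Pi) \hookrightarrow \mapS$ explicitly using multipush/shift maps, and to distinguish them using the hypothesis that $\map(\Pi)$ is indicable. First I would set up the coordinates on $S \in \familyb(\Pi)$: by definition $S$ admits a shift-type homeomorphism $\sigma$ permuting a bi-infinite family $\{\Pi_i\}_{i \in \Z}$ of disjoint copies of the distinguished surface $\Pi$, so there is a ``reference'' copy $\Pi_0 \subset S$ together with a self-homeomorphism $\sigma$ of $S$ with $\sigma(\Pi_i) = \Pi_{i+1}$. Since $\Pi_0$ is a subsurface (with the distinguished boundary structure) there is a natural ``inclusion'' homomorphism $\iota_0 \colon \map(\Pi) \to \mapS$ supported on $\Pi_0$, and conjugating by powers of $\sigma$ gives homomorphisms $\iota_n = \sigma^n \iota_0 \sigma^{-n}$ supported on $\Pi_n$; these are the building blocks.

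Next, fix a surjection $\phi \colon \map(\Pi) \twoheadrightarrow \Z$ (this is where indicability is used). The key idea is to twist the naive inclusion by a shift whose power is governed by $\phi$: define, for $f \in \map(\Pi)$,
\[
\Psi(f) \;=\; \sigma^{\phi(f)} \cdot \iota_0(f) \cdot \sigma^{-\phi(f)} \cdot (\text{correction terms}),
\]
or more precisely realize $\map(\Pi)$ as acting on the ``doubled'' configuration where $f$ both acts on $\Pi_0$ in the standard way and simultaneously shifts the whole picture by $\phi(f)$ copies; the well-definedness as a homomorphism follows because $\phi$ is a homomorphism and the supports of $\iota_n(f)$ are controlled. (The actual construction will need to interleave the shift with a supported action so that the result is a homomorphism and is injective — injectivity holds because composing $\Psi(f)$ with ``forget the shift and restrict to $\Pi_0$'' recovers $f$.) One then gets a family $\Psi_k$, $k \geq 1$, by instead using $\sigma^{k\phi(f)}$, i.e., shifting by multiples of $k$, or equivalently spreading the support across every $k$-th copy $\Pi_0, \Pi_k, \Pi_{2k}, \dots$; these are countably many embeddings indexed by $k \in \N$.

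The remaining two things to check are that these embeddings are pairwise non-conjugate in $\mapS$ and that none of them is induced by an embedding $\Pi \hookrightarrow S$. For the latter: an embedding of surfaces induces a map on mapping class groups whose image preserves (up to isotopy) the subsurface $\Pi$, hence fixes a point in the appropriate curve/arc complex or preserves a proper essential subsurface; but $\Psi_k(\map(\Pi))$ moves the copies of $\Pi$ around under $\phi$-nontrivial elements and so cannot preserve any finite-type subsurface — in particular there is an element $f$ with $\phi(f) \neq 0$ whose image acts with ``infinite translation length'' type behavior incompatible with being supported on a fixed subsurface. For non-conjugacy: a conjugating homeomorphism $h$ would have to carry the support data of $\Psi_k$ to that of $\Psi_{k'}$; I would extract a conjugacy invariant from the way $\Psi_k$ interacts with the shift $\sigma$ — for instance, for $f$ with $\phi(f) = 1$, the image $\Psi_k(f)$ is (coarsely) a shift by $k$ copies of $\Pi$, and the ``spacing'' $k$ between consecutive copies in the support is a topological invariant of the conjugacy class (detectable, e.g., via the action on ends or via which powers of the element are ``fragmentable'' into compactly supported pieces). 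This last step — pinning down a clean, checkable conjugacy invariant and verifying the maps are genuinely injective homomorphisms rather than just set maps — is the main obstacle, and I expect the paper handles it with a careful analysis of the action on the end space of $S$ and the non-conjugacy argument used for the shift maps themselves (the same mechanism that shows a shift by $k$ and a shift by $k'$ are non-conjugate in $\mapS$).
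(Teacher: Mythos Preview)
Your proposal has the right shape but two genuine gaps.

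\textbf{The embedding is not well-defined as written.} Your formula $\Psi(f)=\sigma^{\phi(f)}\iota_0(f)\sigma^{-\phi(f)}$ is just conjugation and is not a homomorphism; you acknowledge this with ``correction terms,'' but the alternative you sketch---``act on $\Pi_0$ in the standard way and simultaneously shift''---also fails, because $\iota_0(f)$ is supported on $\Pi_0$ while $\sigma$ moves $\Pi_0$ to $\Pi_1$, so the composites $\Psi(f)\Psi(g)$ and $\Psi(fg)$ disagree whenever $\phi(f)\neq 0$. The paper's fix is to replace $\iota_0(f)$ by the \emph{diagonal} action $\bar f$ that applies $f$ simultaneously on \emph{every} $\Pi_i$; then $\bar f$ commutes with the shift $\sigma$, and $f\mapsto \bar f\,\sigma^{\phi(f)}$ is visibly a homomorphism (this is an embedding of $G$ into the wreath product $G\wr\Z$, landing in the diagonal copy of $G$ times the shift). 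Injectivity is then immediate: if $\bar f\,\sigma^k$ is trivial one needs $k=0$ and $\bar f$ trivial, hence $f$ trivial. Your ``restrict to $\Pi_0$'' check for injectivity does not make sense once the shift is present.

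\textbf{Your non-conjugacy mechanism does not use the hypotheses and is the wrong invariant.} You propose distinguishing embeddings by the ``spacing'' $k$ in $\sigma^{k\phi(f)}$, but all of these embeddings have the same support (all of $D_\Pi$), and you give no argument that different $k$ yield non-conjugate subgroups; you yourself flag this as the main obstacle. The paper's approach is cleaner and is exactly what the hypotheses ``$\Pi$ distinguished'' and ``$S\in\familyb(\Pi)$'' are engineered for: instead of varying $k$, one removes a \emph{finite} number $m$ of copies of $\Pi$ from the domain of $\sigma$ to obtain a new shift $\sigma_m$, and builds the embedding with $\sigma_m$ in place of $\sigma$. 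The complement of the support then contains exactly $m$ extra copies of $\Pi$, and the distinguished-surface conditions (finite genus, or finitely many planar/nonplanar ends) guarantee that different values of $m$ give non-homeomorphic complements, hence non-conjugate subgroups. This is a topological invariant you can actually compute, whereas your ``spacing'' invariant would require a separate argument about conjugacy of powers of shifts that is neither supplied nor obviously available from the hypotheses.
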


The above theorem is in line with a body of work dedicated to understanding and constructing homomorphisms between mapping class groups; see, for example, \cite{AramayonaLeiningerSouto, AramayonaSouto, ALM}.  There are uncountably many distinguished surfaces $\Pi$ for which $\map(\Pi)$ is indicable; see \Cref{ex:indicable}. When $\Pi$ has at least two nonplanar ends, \Cref{thm:indicable} also holds with $\pmap(\Pi)$ in place of  $\map(\Pi)$; see \Cref{cor:PureEmbedding}. 
 
\Cref{thm:indicable} also answers Question 4.75 from the AIM problem list on surfaces of infinite type \cite{AIM-PL} which asks, “Given a homomorphism $f \colon \map(S) \to \map(S')$, does $f$ preserve the notion of being compactly supported?”  Bavard, Dowdall and Rafi \cite{BVD} show that the answer is yes for surjective homomorphisms, and  Aramayona, Leininger, and McLeay \cite{ALM} give an example of two surfaces and self-maps for which the answer is no. Our results show  that there is an uncountable family of surfaces and maps for which the answer is also no.

The second main result of our paper is \Cref{cor:otherS}, which is a combination theorem for indicable subgroups of $\mapS$. We summarize its statement as \Cref{thm:main} below. The $\star$-product used in the statement 
is an interpolation between free products and direct products. The $\star$-product of two groups $G_i$ with subgroups $H_i$ is defined as 
\[ (G_1, H_1) \star (G_2, H_2) := G_1\ast G_2 /\llangle [G_1,H_2],[H_1,G_2]\rrangle.
\]
More generally, given $G_1, \ldots, G_n$ with  subgroups $H_1, \ldots, H_n$, let \[(G_1, H_1) \star \cdots \star (G_n, H_n) := G_1 \ast \cdots \ast G_n / \llangle [G_i, H_j]: i \neq j \rrangle.\]

\begin{restatable}{theorem}{Main}\label{thm:main}
    Let $G_i$ be an indicable group that embeds in $\map(S_i)$ for $i=1, \ldots, n$, where $S_i$ is a surface with exactly one boundary component.  For each i, fix a surjective map $f_i \colon G_i \to \Z$, and let $H_i$ be the kernel of  $f_i$.  The indicable group $(G_1, H_1) \star \cdots \star (G_n, H_n)$ embeds in $\mapS$ for  $S = S_\Gamma(\Pi)$, where $\Pi$ is obtained from $\#_n S_i$ by capping off $n-1$ boundary components. 
    \end{restatable}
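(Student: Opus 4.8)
The plan is to build the embedding by hand, using the shift/multipush maps of $S = S_\Gamma(\Pi)$ to glue together $n$ copies of the basic indicable construction underlying Theorem~\ref{T:otherGs}; since \Cref{thm:main} restates Theorem~\ref{cor:otherS}, this construction is what has to be carried out. \emph{Setup.} First I would rewrite each $G_i$ as a semidirect product: because $\Z$ is free, the short exact sequence $1 \to H_i \to G_i \xrightarrow{f_i} \Z \to 1$ splits, so $G_i = H_i \rtimes_{\alpha_i} \langle t_i \rangle$ with $\alpha_i(h) = t_i h t_i\inv$ computed inside $\map(S_i)$. Next, record the relevant features of the target: the surface $\Pi$ (namely $\#_n S_i$ with $n-1$ boundary components capped) contains an arrangement of pairwise disjoint subsurfaces $\Sigma_1, \dots, \Sigma_n$ with $\Sigma_i \cong S_i$, so that $\map(S_i)$ sits inside $\map(\Pi)$ by extension by the identity; and $S_\Gamma(\Pi)$ carries multipush maps $x_1, \dots, x_n$, one per factor, where the graph $\Gamma$ of \Cref{def:familyb} is chosen so that the way the $x_i$-orbits run through the $\Sigma_i$-slots of the countably many copies of $\Pi$ inside $S$ realizes exactly the commutation pattern defining the $\star$-product.

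\emph{The homomorphism.} I would define $\rho$ on generators by $\rho(t_i) = x_i$ and, for $h \in H_i \le \map(S_i)$, by the ``$\alpha_i$-twisted diagonal''
\[
  \rho(h) \;=\; \prod_{v}\, \bigl[\,\alpha_i^{-\epsilon_i(v)}(h)\,\bigr]_{\Sigma_{i,v}},
\]
where $v$ ranges over the copies of $\Pi$ in $S$, $\Sigma_{i,v}$ is the $i$-th slot of the $v$-th copy, and $\epsilon_i(v)$ is the $x_i$-translation coordinate of $v$; this is precisely the embedding $G_i \embeds \map(S)$ produced by the proof of Theorem~\ref{T:otherGs}, transplanted into the $i$-th slot of the common surface $S$. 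One then checks that $\rho$ is a well-defined homomorphism on $(G_1,H_1)\star\cdots\star(G_n,H_n)$ by verifying it on a presentation: the infinite products converge because the $\Sigma_{i,v}$ form a locally finite family of disjoint subsurfaces, so each defines a mapping class of $S$; each $\rho|_{G_i}$ is a homomorphism because the twist by $\alpha_i$ is exactly what forces $x_i\,\rho(h)\,x_i\inv = \rho(\alpha_i(h))$, which is the content of the single-group construction; and the defining commutation relations of the $\star$-product hold in $\map(S)$ because, by the design of $\Gamma$, the subsurfaces supporting the relevant $\rho(H_i)$ and $\rho(H_j)$ are disjoint. Indicability of the $\star$-product is immediate, since the maps $f_i$ all kill the $H_i$ and so assemble into a surjection onto $\Z$.

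\emph{Injectivity, the crux.} Let $\Phi$ be the retraction of $(G_1,H_1)\star\cdots\star(G_n,H_n)$ onto the $\star$-product $Q$ of the cyclic groups $\langle t_i\rangle$ obtained by killing each $H_i$; under $\rho$, the group $Q$ is carried isomorphically onto $\langle x_1,\dots,x_n\rangle \le \map(S)$, because the $x_i$ permute the copies of $\Pi$ in $S$ the way $Q$ acts on $\Gamma$ — this is the second reason $\Gamma$ is chosen as it is. For $w$ in the $\star$-product, collecting all the $x_i^{\pm 1}$ to the right writes $\rho(w) = P_w \cdot \rho(\Phi(w))$ with $P_w$ supported on the static subsurfaces $\Sigma_{i,v}$. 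If $\Phi(w)\neq 1$, then $\rho(\Phi(w))$ genuinely moves copies of $\Pi$, hence moves ends of $S$, while $P_w$ does not, so $\rho(w)\neq 1$. If $\Phi(w)=1$, then $w \in \ker\Phi$ and $\rho(w)=P_w$ is supported on finitely many $\Sigma_{i,v}$, acting on each by an honest element of $H_i \le \map(S_i)$; using the injectivity of $G_i \embeds \map(S_i)$ together with the fact that $\Gamma$ forces ``letters'' of $w$ with non-commuting support onto distinct slots, one reconstructs $w$ from this data, so $\rho(w)=1$ forces $w=1$. I expect this last step to be the main obstacle: $\star$-products admit no canonical normal form, only the partial normal form coming from the commuting-subgroup structure (cf.\ \cite[Section~4.2]{MagnusKarassSolitar}), and the required no-cancellation statement must be extracted from it by a careful analysis of how such a normal form distributes across the copies of $\Pi$ in $S$, which is exactly where the explicit combinatorics of $\Gamma$ does the work.
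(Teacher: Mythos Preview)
Your overall architecture matches the paper's: embed each $G_i$ via a multipush together with a diagonal action on the copies of $S_i \subset \Pi$, then prove injectivity by factoring the image of a word into a ``multipush part'' and a ``static part.'' However, your twisted diagonal is both a misreading of Theorem~\ref{T:otherGs} and a genuine complication. The paper uses the \emph{untwisted} diagonal $\bar g$, which acts as the same $g$ on every copy of $\Pi$; a generator $t \in T_i$ is sent to $\bar t \cdot x_i$, not to the bare multipush $x_i$. The payoff is that $\bar g$ commutes with \emph{every} multipush $x_j$ (not just $x_i$), so for any word $w$ one gets the clean factorization $\phi(w) = u \cdot \bar w$ with $u \in \langle x_1, \dots, x_n \rangle$ and $\bar w$ supported in the vertex surfaces. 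Your twisted diagonal $\rho(h)$ does \emph{not} commute with $x_j$ for $j \neq i$ (conjugation by $x_j$ permutes the slots but leaves the $i$--twist unchanged only if $\epsilon_i$ is $x_j$--invariant, which it generally is not), so ``collecting the $x_i^{\pm 1}$ to the right'' is far messier than you indicate.

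There are two concrete gaps in the injectivity argument. First, when $\Phi(w) \neq 1$, you assert that $\rho(\Phi(w))$ ``genuinely moves copies of $\Pi$''; this fails whenever $\Gamma$ has cycles (e.g.\ the Cayley graph of $\Z^2$), since a nontrivial word in the multipushes can return every $\Pi_v$ to itself. Relatedly, your justification that $\langle x_1,\dots,x_n\rangle \cong Q$ ``because the $x_i$ permute the copies of $\Pi$ the way $Q$ acts on $\Gamma$'' is not a proof---there is no such action, and the freeness requires the covering argument of \Cref{thm:freegroup}. The paper uses the observation recorded after that theorem: a nontrivial $u \in \langle x_1, \dots, x_n \rangle$ has support not contained in the union of the vertex surfaces, whereas $\bar w$ is supported entirely there, so $u \bar w \neq 1$. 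Second, when $\Phi(w) = 1$, your claim that $P_w$ is supported on \emph{finitely many} $\Sigma_{i,v}$ is false---already for $w = h \in H_1$ nontrivial, $\rho(h)$ hits every $\Sigma_{1,v}$---and the vague ``reconstruction'' does not constitute an argument. Here the untwisted diagonal is decisive: $\bar w$ acts on every $S_i$--slot by the \emph{same} element, namely the image of $w$ under the retraction $G_1 * \cdots * G_n \to G_i$. Triviality of $\bar w$ forces all these retractions to vanish; combined with $u = 1$ in the free group (which forces each syllable of $w$ to lie in the appropriate $H_i$), this places the image of $w$ in the kernel of $H_1 * \cdots * H_n \to H_1 \times \cdots \times H_n$, which is exactly $\llangle [H_i, H_j] : i \neq j \rrangle$.
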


We direct the reader to \Cref{sec:constructions} for the definition of the surface $S_\Gamma(\Pi)$, the construction of which was inspired by work of Allcock \cite{All06}. Importantly, the support of the homeomorphisms defined in our construction is not all of $S_\Gamma(\Pi)$. Consequently, we may change the topology outside the support of the homeomorphisms in any way we choose.  In this way, \Cref{thm:main} actually shows that $(G_1, H_1) \star \cdots \star (G_n, H_n)$ embeds in the mapping class group of a wide class of infinite-type surfaces. For instance, we can arrange for the edited surface to have a non-displaceable subsurface so that the subgroups we construct cannot arise from a construction using isometries. 

A key aspect of the proof of \Cref{thm:main} is a set of criteria on a collection of \emph{shift maps} (or \emph{multipush maps}) that guarantees they generate a free group (see \Cref{thm:freegroup}). Shift maps are generalizations of \emph{handleshifts}, homeomorphisms introduced by Patel and Vlamis in \cite{PatelVlamis} that have become integral to the theory of infinite-type surfaces. In particular, we augment the generators of the groups $G_i$ in the statement of the theorem with these shift maps (or multipush maps). The fact that shift maps do not lie in $\overline{\mapcS}$ implies that the subgroups we construct are also not completely contained in $\overline{\mapcS}$. The only exceptions to this are when $S$ is finite type or the Loch Ness Monster surface, in which case $\overline \mapcS = \mapS$. We avoid the technical statement of \Cref{thm:freegroup} here and direct the reader to \Cref{sec:free}.

There are a variety of indicable groups that can play the role of $G_i$ in the statement of \Cref{thm:main} (or the role of $G$ in the statement of \Cref{thm:indicable}). In particular, one can let $G_i$  be any indicable subgroup of the mapping class group of a finite-type surface with exactly one boundary component, for example, free groups, braid groups, and right-angled Artin groups. 

In the case of right-angled Artin groups, Clay, Leininger, Mangahas \cite{CLM12} and Koberda \cite{Koberda} show that every right-angled Artin group embeds as a subgroup of the mapping class group of some finite-type surface of sufficient complexity. The map $f_i: A_{\G_i} \arr \Z$ which sends each generator of the right-angled Artin group $A_{\G_i}$ with defining graph $\Gamma_i$ to $1$ is a homomorphism, so  $A_{\G_i}$ is indicable. The kernel $H_i$ of this map is called the \textit{Bestvina-Brady group} defined on $\G_i$ and is denoted by $BB_{\G_i}$. The pairs $(A_{\G_i}, BB_{\G_i})$ form a rich class of examples that can be used as the input for \Cref{thm:main} and are discussed in detail in \Cref{sub:RAAGs}.

We also produce new examples of indicable subgroups of big mapping class groups that can be used as input for these theorems, including solvable Baumslag-Solitar groups $BS(1,n)$ and  a large class of wreath products $G \wr H$. 
The following theorem is a particular case of \Cref{prop:GwrHgeneral} and \Cref{thm:BS1n}, which both hold for a  more general class of surfaces. We make the statement below to avoid technicalities. 

\begin{theorem}\label{intro-subgroups-statement}
If $S$ is a Cantor tree surface, then solvable Baumslag-Solitar groups $BS(1,n)$  and wreath products $\Z^n \wr \Z$ for any $n\geq 1$ arise as subgroups of $\mapS$.
\end{theorem}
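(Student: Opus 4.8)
The strategy is to realize both families of groups as $\star$-products (or close relatives thereof) and feed them into the machinery of \Cref{thm:main} and its generalizations, using the Cantor tree surface as the ambient surface $S$. Since $S$ is stated to be obtainable in the form $S_\Gamma(\Pi)$ after editing the topology outside the support of the relevant homeomorphisms, and since a Cantor tree surface contains non-displaceable subsurfaces, the main content is (i) exhibiting $BS(1,n)$ and $\Z^n \wr \Z$ as indicable groups built from indicable pieces in the right combinatorial shape, and (ii) checking that the ``shift map'' or ``multipush map'' generators one adjoins, together with the generators of the finite-type pieces, satisfy the hypotheses of \Cref{thm:freegroup} so that the combined group has the desired presentation rather than something larger. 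I would treat the two families separately but in parallel.

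\textbf{Baumslag--Solitar case.} The group $BS(1,n) = \langle a, t \mid t a t^{-1} = a^n\rangle$ is solvable and indicable (the relation forces $a \mapsto 0$, $t \mapsto 1$ onto $\Z$), with the kernel of the abelianization map being $\Z[1/n]$, an increasing union of infinite cyclic groups. I would realize $a$ by a compactly supported homeomorphism supported on a single distinguished piece $\Pi_0$ (a copy of a surface of genus one, say, with one boundary component), and realize $t$ as a shift map that translates along the $\Z$-indexed collection of copies $\{\Pi_k\}_{k \in \Z}$ of $\Pi_0$ appearing in $S_\Gamma(\Pi)$, chosen so that conjugating $a$ by $t$ ``spreads'' its support and the product of $n$ appropriately placed translates of $a$ is isotopic to the original $a$. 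The heart of the verification is a ping-pong / normal-form argument — exactly the role of \Cref{thm:freegroup} — showing the subgroup $\langle a, t\rangle$ is not accidentally free, i.e., that the only relations are consequences of $tat^{-1} = a^n$; here one would track the supports of words in $a, t$ and their translates under the shift to confirm the Britton's-lemma normal form for $BS(1,n)$ is respected. That a Cantor tree surface has the combinatorial type needed to support such a shift with the correct ``$n$-fold overlap'' is where I expect the actual geometric work to lie.

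\textbf{Wreath product case.} For $\Z^n \wr \Z = \left(\bigoplus_{k\in\Z}\Z^n\right) \rtimes \Z$, I would take $G = \Z^n$ realized by $n$ commuting compactly supported homeomorphisms on a distinguished piece $\Pi_0$ (for $\Z^n$ one can use $n$ disjointly supported Dehn twists, or a $\Z^n$ subgroup of the mapping class group of a genus-$n$ subsurface with one boundary component), and realize the acting $\Z$ as a shift map $\sigma$ along the $\Z$-indexed copies $\{\Pi_k\}$, so that $\sigma$-conjugates of the generators of $\Z^n$ land in disjoint copies $\Pi_k$ and hence all commute — giving exactly the base group $\bigoplus_k \Z^n$ — while no nontrivial word in the base group can be killed because distinct copies $\Pi_k$ are disjoint and the shift acts freely on the index set. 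Again \Cref{thm:freegroup} (or the combination theorem \Cref{cor:otherS}/\Cref{T:otherGs}) is the tool that packages ``disjoint supports $\Rightarrow$ the group is the expected $\star$-product'' and ``the shift together with the pieces generates the expected split extension.'' One should note $\Z^n \wr \Z$ is indicable (project to the acting $\Z$), so there is no obstruction from that side, and the editing-outside-the-support remark lets us pass from $S_\Gamma(\Pi)$ to the Cantor tree surface itself.

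\textbf{Main obstacle.} The delicate point in both cases is not the group theory — $BS(1,n)$ and $\Z^n\wr\Z$ are manifestly indicable and have well-understood normal forms — but verifying that the concrete shift/multipush maps on the Cantor tree surface generate exactly the intended group and nothing larger or with extra relations. That is, one must pin down the interaction between the shift map's displacement of subsurface supports and the relations of the target group, and confirm the hypotheses of \Cref{thm:freegroup} hold for the chosen configuration; for $BS(1,n)$ this includes engineering the precise $n$-to-$1$ overlap pattern of supports that encodes $tat^{-1}=a^n$, which is the step I would expect to require the most care and where appeals to \Cref{prop:GwrHgeneral} and \Cref{thm:BS1n} do the real work.
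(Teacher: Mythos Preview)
Your wreath product argument is essentially the paper's: realize $\Z^n$ inside $\map(\Pi)$ for a planar $\Pi$ with enough ends (Dehn twists about $n$ disjoint curves, each bounding at least two ends), let a shift $h$ with domain $D_\Pi$ play the role of the acting $\Z$, and observe that the $h$-conjugates of $\Z^n$ sit in disjoint copies $\Pi_k$ and hence commute. This is exactly \Cref{prop:GwrHgeneral} with $G=\Z^n$ and $H=\langle h\rangle\cong\Z$; the $\star$-product and combination-theorem framing is unnecessary here but harmless. (Do note that the Cantor tree has genus zero, so your parenthetical ``genus-$n$ subsurface'' option is unavailable; the disjoint-twists option is the one that works.)

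Your $BS(1,n)$ plan, however, has a genuine gap. If $a$ is supported on a single piece $\Pi_0$ and $t$ is a shift sending $\Pi_0$ to $\Pi_1$, then $tat^{-1}$ is supported on $\Pi_1$, which is \emph{disjoint} from the support of $a^n\subset\Pi_0$. So $tat^{-1}=a^n$ is impossible unless $a$ is trivial; what you actually get from this setup is $\Z\wr\Z$, not $BS(1,n)$. There is no ``$n$-to-$1$ overlap pattern of supports'' available when the pieces $\Pi_k$ are pairwise disjoint and permuted freely by the shift, and \Cref{thm:freegroup} goes in the wrong direction for you (it shows multipushes generate a \emph{free} group, not that extra relations hold).

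The paper's construction (\Cref{thm:BS1n}) avoids this by making $a=\phi$ act on \emph{every} $\Pi_k$ simultaneously, but differently on each: one fixes a nested system of clopen partitions of the Cantor set of ends in $\Pi$, and on $\Pi_k$ the map $\phi$ shifts the level-$k$ pieces, where level $k$ is $n$ times finer than level $k-1$. Then $h\phi h^{-1}$ acts on $\Pi_k$ as $\phi$ acted on $\Pi_{k-1}$, i.e.\ as a shift of the coarser partition, which equals $\phi^n$ restricted to $\Pi_k$. Injectivity is then checked via the normal form $t^ia^kt^{-j}$. The self-similarity of the Cantor set is used essentially to build these compatible multi-scale shifts; this idea is entirely absent from your proposal and is the missing ingredient.
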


Note that  solvable Baumslag-Solitar and $\Z^n\wr \Z$  cannot embed in the mapping class group of any finite-type surfaces. Our theorem gives the first construction of these groups (for $n>1$) in the mapping class groups of infinite-type surfaces. Lanier--Loving construct $\Z \wr \Z$ as a subgroup of the mapping class group of any infinite-type surface without boundary \cite{LanierLoving}.  

\subsection*{Outline} \Cref{sec:prelim} contains preliminaries on infinite-type surfaces, mapping class groups, and  shift and multipush maps. \Cref{sec:constructions} gives a construction of surfaces based on Schreier graphs and describes how to obtain non-conjugate embeddings of subgroups generated by either shift or multipush maps. Our constructions of specific subgroups of big mapping class groups begins in \Cref{sec:subgroupconstructions}, where we build embeddings  of free groups, wreath products, and solvable Baumslag--Solitar groups into big mapping class groups. In \Cref{sec:indicable} we prove \Cref{thm:indicable}. Finally, in \Cref{sec:combo} we prove and discuss applications of the combination theorem (\Cref{thm:main}).

\subsection*{Acknowledgements}
The authors would like to thank Women in Groups, Geometry, and Dynamics (WiGGD) for facilitating this collaboration, which was supported by NSF DMS--1552234, DMS--1651963, and  DMS--1848346.  The authors also thank Mladen Bestvina and Robbie Lyman for helpful conversations, as well as George Domat for productive discussions about surfaces with indicable mapping class groups and an anonymous referee providing comments which significantly improved the paper. 

In addition, the authors acknowledge support from NSF grants DMS--1803368, DMS--2106906, and DMS-2340341 (Abbott), DMS--1906095 and RTG DMS--1840190 (Hoganson), DMS--1902729 and DMS--2231286 (Loving), DMS--1937969 and DMS--2046889 (Patel), and DMS--2005297 and DMS--2506840 (Skipper). Skipper was also supported by the European Research Council (ERC) under the European Union’s Horizon 2020 research and innovation program (grant agreement No.725773).

\section{Preliminaries}
\label{sec:prelim}
\subsection{Ends of surfaces} Essential to the classification of infinite-type surfaces is the notion of an end of a surface and the space of ends for an infinite-type surface $S$. 

\begin{definition}
An \textit{exiting sequence} in \( S \) is a sequence \( \{U_n\}_{n\in\N} \) of connected open subsets of \( S \) satisfying:
\begin{enumerate}
\item \( U_{n} \subset U_m \) whenever $m<n$;
\item $U_n$ is not relatively compact for any $n \in \N$, that is, the closure of $U_n$ in $S$ is not compact;
\item the boundary of \(U_n \) is compact for each \( n \in \N \); and
\item any relatively compact subset of $S$ is disjoint from all but finitely many of the $U_n$’s.
\end{enumerate}
Two exiting sequences \( \{U_n\}_{n\in\N} \) and \( \{V_n\}_{n\in\N} \) are equivalent if for every \( n \in \N \) there exists \( m \in \N \) such that \( U_m \subset V_n \) and \( V_m \subset U_n \). An \textit{end} of \( S \) is an equivalence class of exiting sequences. 
\end{definition}

The \textit{space of ends} of S, denoted by $E(S)$, is the set of ends of $S$ equipped with a natural topology for which it is totally disconnected, Hausdorff, second countable, and compact. In particular, $E(S)$ is homeomorphic to a closed subset of a Cantor set. The definition of the topology on the space of ends is not relevant to this paper and so is omitted.

Ends of $S$ can be isolated or not and can be \emph{planar}, if there exists an $i$ such that $U_i$ is homeomorphic to an open subset of the plane $\mathbb{R}^2$, or \emph{nonplanar}, if every $U_i$ has infinite genus. The set of nonplanar ends of $S$ is a closed subspace of $E(S)$ and will be denoted by $E^g(S)$; these are frequently called the \emph{ends accumulated by genus}. We have the following classification theorem of Ker\'ekj\'art\'o \cite{Kerekjarto} and Richards \cite{Richards}: 

\begin{theorem}[Classification of infinite-type surfaces]\label{thm:classification}
The homeomorphism type of an orientable, infinite-type surface $S$ is determined by the quadruple
$$(g, b, E^g(S), E(S))$$
where $g \in \Z_{\geq 0} \cup \infty$ is the genus of $S$ and $b \in \Z_{\geq 0}$ is the number of (compact) boundary components of S.
\end{theorem}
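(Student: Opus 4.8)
The plan is to follow the classical strategy of Kerékjártó and Richards: exhaust $S$ by compact subsurfaces, read the data $(g,b,E^g(S),E(S))$ off such an exhaustion, check it is a homeomorphism invariant, and then prove uniqueness by a back-and-forth construction producing a homeomorphism between any two surfaces carrying the same data.

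\emph{Setup and exhaustion.} Since $b$ is manifestly a homeomorphism invariant and a bordered surface is obtained from one without boundary by deleting $b$ disjoint open disks, I would first reduce to the case $b=0$, remembering which complementary regions the deleted disks lie in. By Radó's triangulability theorem, $S$ admits an exhaustion $S_1\subseteq S_2\subseteq\cdots$ by compact connected bordered subsurfaces with $S_n\subseteq\operatorname{int}(S_{n+1})$ and $\bigcup_n S_n=S$. Absorbing every relatively compact complementary component of $S\setminus\operatorname{int}(S_n)$ into $S_n$, and enlarging $S_n$ along arcs joining boundary circles that bound a common complementary region, I may assume that each complementary region of $S_n$ is non-compact and meets $S_n$ in a single circle. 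Each $S_n$ is then a compact surface with boundary, determined by its genus $g_n$ and its number of boundary circles; these boundary circles are in bijection with the complementary regions of $S_n$, and each complementary region of $S_{n+1}$ sits inside a unique complementary region of $S_n$, so the complementary regions form a locally finite tree $T$. One checks directly that $E(S)$ is homeomorphic to the space of ends of $T$, that $g=\sup_n g_n$, and that an end lies in $E^g(S)$ exactly when every region along the corresponding ray of $T$ has infinite genus; in particular all four quantities are intrinsic invariants.

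\emph{Uniqueness.} Suppose now $S$ and $S'$ are infinite-type surfaces with the same quadruple, and fix a homeomorphism $\varphi\colon E(S)\to E(S')$ with $\varphi(E^g(S))=E^g(S')$. Choose exhaustions $\{S_n\}$, $\{S'_n\}$ with trees $T$, $T'$ as above. The heart of the proof is to pass to subsequences and further subdivide the two exhaustions so that $T$ and $T'$ become isomorphic compatibly with $\varphi$ and so that corresponding complementary regions carry the same genus at each level. Granting this, I would construct the homeomorphism $S\to S'$ inductively: given a homeomorphism $S_n\to S'_n$, extend it across the compact collar $\overline{S_{n+1}\setminus S_n}$ using the classification of compact bordered surfaces together with the standard fact that a homeomorphism between the boundaries of two homeomorphic compact bordered surfaces, matching boundary components in the prescribed way, extends to a homeomorphism of the surfaces. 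The union of these maps is the desired homeomorphism $S\to S'$.

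\emph{Main obstacle.} The real difficulty is the combinatorial matching step: from the mere existence of $\varphi$ (respecting genus accumulation) one must produce refinements of the exhaustions that literally agree. This needs two ingredients. First, a genus-redistribution observation: within any complementary region one can slide handles toward or away from the bounding circle, so that after refining the exhaustion the genus added between consecutive levels can be prescribed, subject only to the constraint recorded by $E^g$ — a region all of whose ends are non-planar must keep gaining genus along the way out, whereas a region with a planar end can be arranged to stabilize in genus in that direction. Second, a bookkeeping argument using compactness of $E(S)$ and the fact that the clopen sets cut out by the boundary circles of the $S_n$ form a neighborhood basis, so that finitely many levels at a time suffice to separate ends the way $\varphi$ demands. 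Assembling these into a coherent simultaneous refinement of both exhaustions — essentially the content of Richards' paper — is where all the subtlety lies; the surface-topological inputs (classification of compact surfaces, extension of boundary homeomorphisms) are routine.
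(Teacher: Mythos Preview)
The paper does not prove this theorem at all: it is stated as the classical classification theorem of Ker\'ekj\'art\'o and Richards and cited to \cite{Kerekjarto} and \cite{Richards}, with no argument given. So there is no ``paper's own proof'' against which to compare your attempt.

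For what it is worth, your sketch is a faithful outline of Richards' original argument --- exhaust by compact subsurfaces, normalize the exhaustion so complementary regions are unbounded and meet the core in one circle, read off the end space as the boundary of the resulting tree, and then run a back-and-forth to match two surfaces with the same data. Your identification of the ``main obstacle'' (the genus-redistribution and simultaneous refinement of exhaustions compatible with the given homeomorphism of end pairs) is exactly where the work in Richards' paper lies. If you were asked to supply a proof here, this would be an acceptable plan, though of course the hard combinatorial step is only gestured at; in the context of this paper, however, the theorem is simply quoted.
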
 

There is a more complicated classification of infinite-type surfaces allowing for non-compact boundary components due to Prishlyak--Mischenko \cite{Prishlyak-Mischenko2007}. We  use this classification once in \Cref{sec:non-conjugate}, but in our setting, the surfaces we are comparing have precisely the same boundary, so the classification reduces to considering the triple $(g, E^g(S), E(S))$.

\subsection{Mapping class group}
The \emph{mapping class group} of $S$, denoted  $\mapS$, is the set of orientation-preserving homeomorphisms of $S$ up to isotopy that fix the boundary pointwise. The natural topology on the set of homeomorphisms of $S$ is the compact-open topology, and $\mapS$ is endowed with the induced quotient topology. Equipped with this topology, $\mapS$ is a topological group. When $S$ is a finite-type surface, this topology on $\mapS$ agrees with the discrete topology, but when $S$ is of infinite type, the two topologies are distinct. The \emph{pure mapping class group}, denoted $\pmapS$,  is the subgroup of $\mapS$ that fixes the set of ends of $S$ pointwise, and $\mapcS$ is the subgroup of compactly supported mapping classes. Note that $\overline\mapcS \leq \pmapS$. When $S$ has at most one nonplanar end, $\overline\mapcS$ is actually equal to $\pmapS$ \cite{APV1}.

\begin{definition}\label{def:IntrinsicInfinite} A mapping class $f \in \mapS$ is of \emph{intrinsically infinite type} if $f \notin \overline\mapcS$. A subgroup $H \leq \mapS$ is of intrinsically infinite type if $H$ is not completely contained in $\overline{\mapcS}$. 
\end{definition}

\noindent In this paper, all of the subgroups of $\mapS$ that we construct contain many intrinsically infinite-type homeomorphisms and, therefore, cannot be completely contained in $\overline \mapcS$, except when $S$ is finite-type or the Loch Ness Monster, in which case $\overline \mapcS=\mapS$. Recall that the Loch Ness Monster surface is the unique infinite-genus surface with one end (up to homeomorphism). 

We are particularly interested in indicable groups and various ways of embedding them in mapping class groups of infinite-type surfaces. A group $G$ is \emph{indicable} if there exists a surjective homomorphism $f\colon G \to \Z$. We show in \Cref{L:indicable} that a group $G$ is indicable if and only if there is a presentation for $G$ where the relators all have total exponent sum zero in the generators of $G$. Importantly, many of our constructions require an indicable subgroup $G$ of $\mapS$  as an input, where $S$ is a surface with exactly one boundary component. There are many examples of such groups that were mentioned in the introduction, but there are also some restrictions on what groups $G$ can arise as subgroups of mapping class groups, as is evidenced by the following lemma, which generalizes the same result from the finite-type setting \cite[Corollary~7.3]{primer}. 

\begin{lemma} [{\cite[Corollary 3]{ACCL20}}] If $S$ is an orientable surface with nonempty compact boundary, the mapping class group fixing the boundary pointwise is torsion-free.
\end{lemma}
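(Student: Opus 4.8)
The plan is to adapt the proof of the finite-type statement \cite[Corollary~7.3]{primer}: show that a torsion mapping class is represented by a genuine finite-order homeomorphism, and then argue that such a homeomorphism cannot fix a boundary component pointwise unless it is the identity. So suppose $f \in \map(S)$ has finite order $n \geq 1$; the goal is to prove $f = \id$. It suffices to treat $S$ connected, the general case following componentwise, and (by the classification) $\partial S$ is a finite union of circles.

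First I would realize $f$ by a homeomorphism $\phi$ of $S$ of finite order that fixes $\partial S$ pointwise. For finite-type $S$ this is classical — equip $S$ with a hyperbolic metric for which $f$ acts as an isometry, using Nielsen realization for the cyclic group $\langle f\rangle$; since $f$ fixes each boundary component and $f^n=\id$, a fractional-Dehn-twist computation forces the rotation numbers along the boundary to vanish, so the isometry fixes $\partial S$ pointwise. For infinite-type $S$ one needs the corresponding realization statement for big mapping class groups. Alternatively, one can first cap each boundary circle of $S$ with a once-punctured disk to obtain a boundaryless surface $\widehat S$, observe that the kernel of the capping homomorphism $\map(S)\to\map(\widehat S)$ is the torsion-free group generated by boundary twists (so $f$ survives with order $n$), realize the image by a finite-order homeomorphism $\psi$ of $\widehat S$, and then peel off small $\psi$-invariant disks around the new punctures: by a classical theorem of Ker\'ekj\'art\'o, $\psi$ is topologically conjugate to a rotation near each such puncture, and the same fractional-twist computation (using $f^n=\id$ and that boundary twists have infinite order) shows these rotations are trivial, so $\psi$ restricts to the desired $\phi$ on a copy of $S$.

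Next I would show that a finite-order orientation-preserving homeomorphism $\phi$ of a connected surface $S$ fixing a boundary component $\beta$ pointwise must be the identity. Double $S$ along all of $\partial S$ to obtain a boundaryless connected surface $DS$; since $\phi$ fixes $\partial S$ pointwise, it extends to an orientation-preserving homeomorphism $D\phi$ of $DS$ of finite order that fixes the now-interior circle $\beta$ pointwise. Let $U \subseteq DS$ be the set of points near which $D\phi$ is the identity; then $U$ is open. By Ker\'ekj\'art\'o's theorem, near any fixed point of $D\phi$ the map is topologically conjugate to a rotation, so: (i) near each point of $\beta$ this rotation fixes an arc, hence is trivial, giving $\beta \subseteq U$ and $U \neq \emptyset$; and (ii) if $x$ is a limit of points of $U$, then $x$ is a non-isolated fixed point, so the local rotation is again trivial and $x\in U$, whence $U$ is closed. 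As $DS$ is connected, $U = DS$, so $D\phi = \id$ and $\phi = \id$; in particular $f = \id$.

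The main obstacle is the first step in the infinite-type setting: representing a torsion mapping class of an infinite-type surface by a genuine finite-order homeomorphism fixing the boundary. The second step and the auxiliary facts (the structure of the kernel of the capping map, the infinite order of boundary twists, Ker\'ekj\'art\'o's local normal form) are elementary or import verbatim from the finite-type theory; it is the infinite-type realization statement — the substance of \cite{ACCL20} — that requires genuine work.
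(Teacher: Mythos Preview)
The paper does not give its own proof of this lemma; it simply quotes \cite[Corollary~3]{ACCL20} as a black box and moves on. So there is nothing in the paper to compare your argument against.

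That said, your outline is essentially the argument of \cite{ACCL20}: their main theorem is precisely the Nielsen realization statement for finite subgroups of big mapping class groups, and the torsion-freeness corollary is deduced exactly as you describe---realize, then use that a finite-order orientation-preserving homeomorphism fixing a boundary circle pointwise is the identity (via the local rotation normal form). You correctly flag that the realization step is the entire content; your ``alternative'' route through capping does not avoid it, since realizing the image in $\map(\widehat S)$ is still Nielsen realization on an infinite-type (boundaryless) surface. Your open--closed argument in Step~2 via Ker\'ekj\'art\'o is fine. In short: your proposal is a faithful sketch of the cited result, not an independent proof, and you already say as much in your final paragraph.
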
 

\subsection{Push and shift maps}
In this section, we define shift maps and push maps, which are central to all of our constructions.
A particular type of shift maps, called handle shifts, were first studied by Patel and Vlamis in \cite{PatelVlamis}. This inspired the following definition of Abbott, Miller, and Patel \cite{AMP21}. A similar definition of shift maps appears in \cite{MannRafi} and \cite{LanierLoving}.  

\begin{definition}\label{D:shift}
Let $D_\Pi$ be the surface defined by taking the strip $\mathbb{R}\times [-1,1]$, removing an open disk of radius $\frac{1}{4}$ with center $(n, 0)$ for $n \in \mathbb{Z}$, and attaching any fixed topologically nontrivial surface $\Pi$ with exactly one boundary component to the boundary of each such disk. A \emph{shift} on $D_\Pi$ is the homeomorphism that acts like a translation, sending $(x,y)$ to $(x +1, y)$ for $y \in [-1 + \epsilon, 1 - \epsilon]$ and which tapers to the identity on $\partial D_\Pi$. 
\end{definition}

Given a surface $S$ with a proper embedding of $D_\Pi$ into $S$ so that the two ends of the strip correspond to two different ends of $S$ (see \Cref{fig:WreathConstruction}), the shift on $D_\Pi$ induces a \emph{shift} on $S$, where the homeomorphism acts as the identity on the complement of $D_\Pi$. If instead, we have a proper embedding of $D_{\Pi}$ into $S$ where the two ends of the strip correspond to the same end, we call the resulting homeomorphism on $S$ a \emph{one-ended shift}. Given a shift or one-ended shift $h$ on $S$, the embedded copy of $D_\Pi$ in $S$ is called the \emph{domain} of $h$. By abuse of notation, we will sometimes say that the domain of the  shift or one-ended shift $h$ is $D_\Pi$ rather than referring to it as an embedded copy of $D_\Pi$ in $S$ (when it is clear from context to which embedded copy of $D_\Pi$ we are referring). 

\begin{remark}\label{rem:infinitetype}
If the surface $\Pi$ in \Cref{D:shift} has a nontrivial end space, then a shift or one-ended shift $h$ on $S$ with domain $D_\Pi$ is not in $\pmapS$ since there are ends of $S$ that are not fixed by $h$. Thus, $h \notin \overline \mapcS$ and is of intrinsically infinite type. On the other hand, if $h$ is a shift map and if $\Pi$ is a finite-genus surface with no planar ends, then $h$ is a power of a handle shift on $S$, and the proof of \cite[Proposition 6.3]{PatelVlamis} again tells us that  $h \notin \overline \mapcS$. However, the second conclusion does not hold when $h$ is a power of a one-ended handle shift since, in that case, it follows from work in \cite{PatelVlamis} that $h \in \overline{\mapcS}$.
\end{remark}

\begin{figure}
    \centering
    \def\svgwidth{4in}
\begingroup%
  \makeatletter%
  \providecommand\color[2][]{%
    \errmessage{(Inkscape) Color is used for the text in Inkscape, but the package 'color.sty' is not loaded}%
    \renewcommand\color[2][]{}%
  }%
  \providecommand\transparent[1]{%
    \errmessage{(Inkscape) Transparency is used (non-zero) for the text in Inkscape, but the package 'transparent.sty' is not loaded}%
    \renewcommand\transparent[1]{}%
  }%
  \providecommand\rotatebox[2]{#2}%
  \newcommand*\fsize{\dimexpr\f@size pt\relax}%
  \newcommand*\lineheight[1]{\fontsize{\fsize}{#1\fsize}\selectfont}%
  \ifx\svgwidth\undefined%
    \setlength{\unitlength}{403.20001004bp}%
    \ifx\svgscale\undefined%
      \relax%
    \else%
      \setlength{\unitlength}{\unitlength * \real{\svgscale}}%
    \fi%
  \else%
    \setlength{\unitlength}{\svgwidth}%
  \fi%
  \global\let\svgwidth\undefined%
  \global\let\svgscale\undefined%
  \makeatother%
  \begin{picture}(1,0.36142385)%
    \lineheight{1}%
    \setlength\tabcolsep{0pt}%
    \put(0,0){\includegraphics[width=\unitlength,page=1]{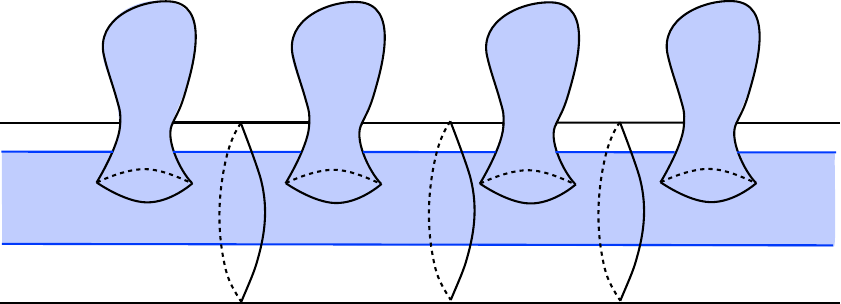}}%
    \put(0.16072293,0.30803801){\makebox(0,0)[lt]{\lineheight{1.25}\smash{\begin{tabular}[t]{l}$\Pi$\end{tabular}}}}%
    \put(0.38910431,0.3052529){\makebox(0,0)[lt]{\lineheight{1.25}\smash{\begin{tabular}[t]{l}$\Pi$\end{tabular}}}}%
    \put(0.61400421,0.3052529){\makebox(0,0)[lt]{\lineheight{1.25}\smash{\begin{tabular}[t]{l}$\Pi$\end{tabular}}}}%
    \put(0.82915608,0.31151943){\makebox(0,0)[lt]{\lineheight{1.25}\smash{\begin{tabular}[t]{l}$\Pi$\end{tabular}}}}%
    \put(0.93847282,0.11864862){\makebox(0,0)[lt]{\lineheight{1.25}\smash{\begin{tabular}[t]{l}$D_\Pi$\end{tabular}}}}%
  \end{picture}%
\endgroup%

    \caption{A surface $S$ that admits a shift whose domain is an embedded copy of $D_\Pi$.}
    \label{fig:WreathConstruction}
\end{figure}

We now use the construction of shift maps to introduce \emph{finite shifts}, which will be used in \Cref{sec:wreath} to construct certain wreath products. These are constructed in a completely analogous way, starting with an annulus instead of a biinfinite strip.

\begin{definition} 
Let $A_\Pi$ be a surface defined by taking the annulus \[([0,n]/0\sim n) \times [-1, 1],\] removing an open disk of radius $\frac{1}{4}$ centered at the integer points, and attaching any fixed topologically nontrivial surface $\Pi$ with exactly one boundary component to the boundary of each disk. A \emph{finite shift} on $A_\Pi$ is the homeomorphism that acts like a translation, sending $(x,y)$ to $(x +1, y)$ (modulo n) for $y \in [-1 + \epsilon, 1 - \epsilon]$ and which tapers to the identity on $\partial A_\Pi$. Given a surface $S$ with a proper embedding of $A_\Pi$ into $S$, the finite shift on $A_\Pi$ induces a \emph{finite shift} on $S$, where the homeomorphism acts as the identity on the complement of $A_\Pi$.  We call the embedded copy of $A_\Pi$ the \emph{domain} of the finite shift.
\end{definition}

\begin{definition}
A \emph{push} is any map that is  a finite shift, a one-ended shift, or a shift map. 
\end{definition}

In \Cref{sec:constructions}, we will introduce the notion of a \emph{multipush}, which is roughly a collection of push maps with disjoint supports, once we have developed some further notation and language.

\section{Surfaces from graphs and non-conjugate embeddings}\label{sec:constructions}

In this section, we begin by constructing a broad class of surfaces using an underlying graph. We then introduce a type of homeomorphism called a \emph{multipush} and show that these maps can be utilized to produce infinitely many non-conjugate embeddings of certain groups into mapping class groups.

\subsection{A construction of surfaces}\label{sec:surfaces}

The basic building block for this construction is a $d$--holed sphere. The following definition of seams restricts to the normal notion of seams for a 3--holed sphere, i.e., a pair of pants. 

\begin{definition}\label{defn:seams}
A set of \emph{seams} on a $d$--holed sphere is a collection of $d$ disjointly embedded arcs such that each boundary component of the sphere intersects  exactly two components of the seams at two distinct points and such that the collection of seams divides the sphere into two components. Call one component the \emph{front side} and the other component the \emph{back side}. These conditions imply that each component is homeomorphic to a disk.
\end{definition}

Starting from any graph $\Gamma$ with a countable vertex set and any surface $\Pi$ with exactly one boundary component, we describe a procedure for building a surface $S_\Gamma(\Pi)$. 
This mirrors a construction of Allcock using the Cayley graph of a given group $G$ \cite{All06}. 

For each vertex $v$ of valence $d$, start with a $d$--holed sphere. Remove a disk on the interior of the front side, and attach the surface $\Pi$ along the boundary component. Call the resulting surface the \emph{vertex surface for $v$}, which we denote by $V_v$, and let $\Pi_v$ be the copy of $\Pi$ on $V_v$. For each edge of the graph, define the edge surface $E$ to be the $2$--holed sphere with seams; topologically this is an annulus.

Whenever $u$ and $v$ are  vertices of $\Gamma$ connected by an edge,  connect the vertex surfaces $V_u$ and $V_v$ with an edge surface $E(u,v)$ by gluing  one boundary component of the edge surface to a boundary component of $V_u$ and the other boundary component of the edge surface to a boundary component of $V_v$ so that the gluing is \emph{compatible} in the following sense: the union of the seams separates $S_\Gamma$ into two disjoint connected components, the \emph{front} and the \emph{back}, containing the front and, respectively, the back of each vertex and edge surface.  Call the resulting surface $S_\Gamma(\Pi)$.  See \Cref{fig:S_Gamma(Pi)} for an example.   Notice that the assumption that the vertex set $V(\Gamma)$ of $\Gamma$ is countable is necessary for this construction to yield a surface.  In particular, if $V(\Gamma)$ is uncountable, then $S_\Gamma(\Pi)$ is not second countable and therefore cannot be a surface. 

\begin{figure}
    \centering
    \def\svgwidth{3.75in}
    \import{images/}{S_Gamma-Pi-Z2.pdf_tex}
    \caption{An example of the surface $S_\Gamma(\Pi)$ where the graph $\Gamma$ is the Cayley graph of the group $\mathbb Z^2=\langle a,b : [a,b]\rangle$.}
    \label{fig:S_Gamma(Pi)}
\end{figure}

We  also define a more general class of surfaces constructed by editing the back of $S_\Gamma(\Pi)$ as follows.  As above, fix a graph $\Gamma$  with a countable vertex set and a surface with one boundary component $\Pi$, and let $S=S_\Gamma(\Pi)$.  Given any  collection of surfaces $\{\Omega_v\}_{v \in V(\Gamma)}$, only finitely many of which have boundary, we form the surface $S \connsumv \Omega_v$ as follows. For each $v\in V(\Gamma)$, take the connect sum of $V_v$ and the corresponding $\Omega_v$. It is
helpful to assume that the connect sum is done on the back
of $V_v$, since we will perform certain homeomorphisms on the front of $S$ later in the paper. We note that if every $\Omega_v$ is a sphere, then $S\connsumv \Omega_v$ is homeomorphic to $S$.  On the other hand, by choosing the $\Omega_v$ to be more topologically complex, the homeomorphism type of the resulting surface differs from $S$ in either the genus or the space of ends. Thus, even for a fixed surface $\Pi$, this construction yields a large family of surfaces, formed by varying the topological type of the  $\Omega_v$.

The underlying graph $\Gamma$ used to build $S_\Gamma(\Pi)$ throughout this paper will often be a Schreier graph, which is defined as follows. Let $G$ be a finitely generated group, $H$ a subgroup of $G$, and $T$  a finite generating set for $G$. The Schreier graph $\Gamma(G,T,H)$ is the graph whose vertices are the right cosets of $H$ and in which, for each coset $Hg$ and each $s\in T$, there is an edge from $Hg$ to $Hgs$ labeled by $s$. If $Hg = Hgs$, there is a loop labeled by $s$ at the vertex corresponding to $Hg$. Our assumption on the finiteness of $T$ ensures that $\Gamma(G,T,H)$ has a countable vertex set. When $\Gamma$ is a Schreier graph,  let $\Pi_{Hg}$ be the copy of $\Pi$ on the vertex surface corresponding to the coset $Hg$.  In the special case when $H=\{1\}$,  the Schreier graph $\Gamma(G,T,\{1\})$ is simply the Cayley graph of $G$ with respect to the generating set $T$, which we denote by $\Gamma(G,T)$.

\begin{definition}
Let $\Gamma$ be a Schreier graph for a triple $(G, T, H)$.   A \textit{Schreier surface associated to $(G,T,H)$} is a surface $S=S_\Gamma(\Pi)\connsumv \Omega_v$ where   $\Pi$ has exactly one boundary component and is not a disk, and $\{\Omega_v\}$ is any collection of surfaces,  only finitely many of which have boundary.    
\end{definition}

Using the more general class of Schreier graphs, rather than just Cayley graphs, to construct surfaces in this fashion broadens the class of surfaces our results apply to. For example, when $\Pi$ is compact, the surface $S_{\Gamma}(\Pi)$ will have the same end space as the graph $\Gamma$. A Cayley graph for a finitely generated group has $1,$ $2$ or a Cantor set of ends. On the other hand, there are many more possibilities for a  Schreier graph; any regular graph with even degree can be realized as a Schreier graph \cite{Gro77, Lub95}. Thus, there are Schreier graphs with any finite number of ends, or end spaces isomorphic to $\N \cup \{\infty\}$ or $\{-\infty\}\cup \Z \cup \{\infty\}$. This implies that there are Schreier surfaces with these end spaces as well.  See \Cref{fig:SchreierExs} for two examples of Schreier graphs that cannot be realized as Cayley graphs.

\begin{figure}
    \centering
    \includegraphics[width=5in]{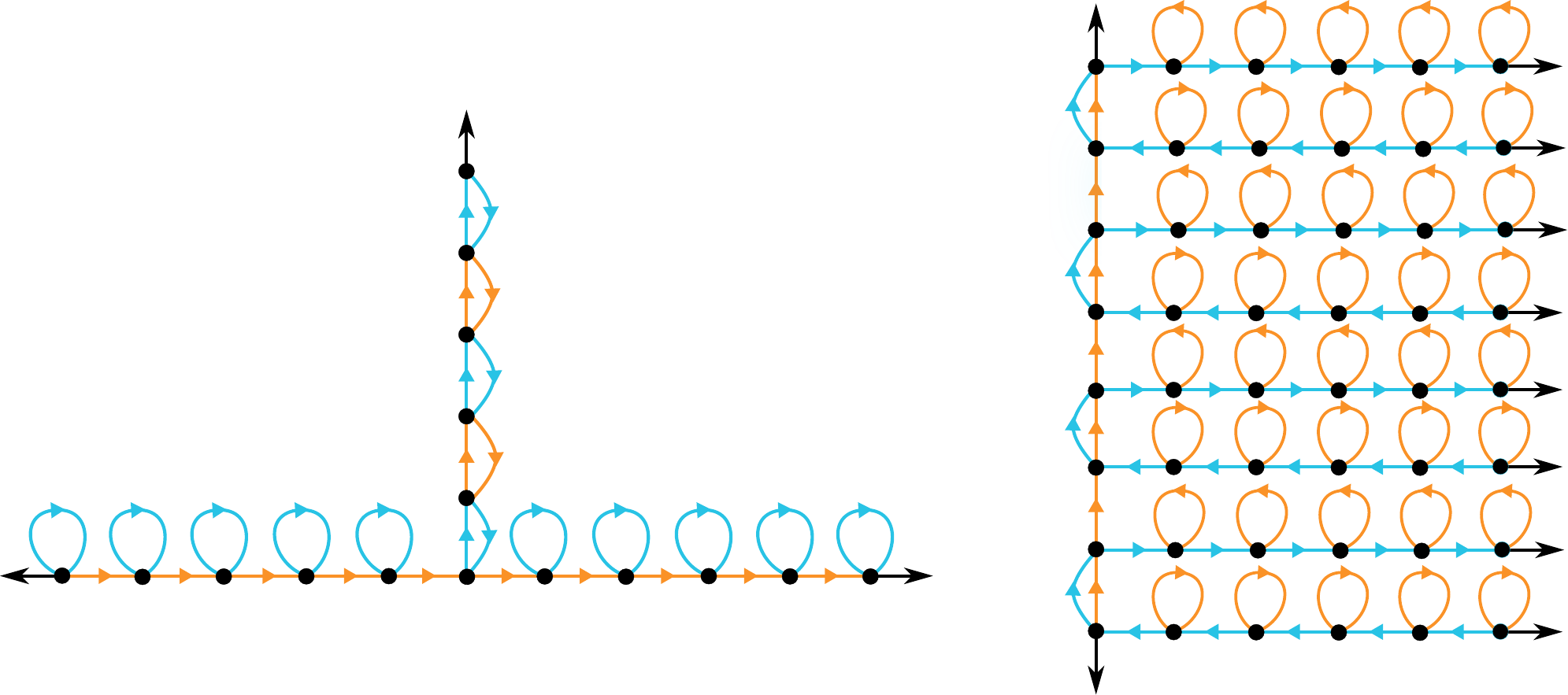}
    \caption{Each graph can be realized as a Schreier graph but not a Cayley graph.  The graph on the left has 3 ends, and the graph on the right has end space homeomorphic to the $2$-point compactification of $\Z$.}
    \label{fig:SchreierExs}
\end{figure}

We now move to defining a homeomorphism called a \emph{multipush} on a Schreier surface.

\begin{definition}\label{D:mpush} Let  $\Gamma = \Gamma(G,T, H)$ be a Schreier graph.  Fix a surface $\Pi$ with exactly one boundary component. Let $S= S_\Gamma(\Pi)\connsumv\Omega_v$ be a Schreier surface.

For each $s\in T$, we construct a collection of push maps whose support corresponds to connected components of the subgraph of $\Gamma$ which includes only edges labeled by $s$ (see \Cref{fig:multipush}). Fix a transversal $\mathcal T$ for the set of double cosets $\{Hg\langle s\rangle\mid g\in G\}$, so that $\mc T$ contains exactly one element from each double coset in the set.  In the case $H=\{1\}$, that is, when $\Gamma$ is a Cayley graph, the set $\mc T$ is simply a transversal for (left cosets of) $\langle s\rangle$.
 For each element $t$ in the transversal, we define a push $\push_{t\langle s \rangle }$ which maps $\Pi_{Hts^i}$ to $\Pi_{Hts^{i+1}}$. The support of $\push_{t\langle s \rangle }$ is contained in the front of \[\left(\bigcup_{i\in \Z}V_{ Hts^i}\right) \,\bigcup\, \left(\bigcup_{i\in \Z}E(Hts^i, Hts^{i+1}) \right).\] Recall that $V_{Hts^i}$ is the vertex surface associated to the vertex $Hts^i$ and $E(Hts^i, Hts^{i+1})$ is the edge surface associated to the edge $(Hts^i, Hts^{i+1})$ for each $i \in \mathbb Z$. 
 This support corresponds to a connected component of $\Gamma$ with all edges labeled by $s$; see \Cref{fig:multipush}.
 The \emph{multipush} $\mpush_s$ associated to $ s$ is the element of $\mapS$ that acts simultaneously as the pushes $\push_{t \langle s \rangle}$ for each $t\in \mathcal T$.  We let $D_s$ denote the domain of the multipush $\mpush_s$. If  $h_{t\langle s\rangle}$ is not a finite shift for any $t\in T$,  we say $x_s$ is an \textit{infinite} multipush. 
 \end{definition}

\begin{figure}
    \centering
    \def\svgwidth{4in}
    \import{images/}{multipush.pdf_tex}
    \caption{A portion of the domain $D_a$ (in blue) of the multipush $\mpush_a$ on the surface $S_\Gamma(\Pi)$, where $\Gamma$ is the Cayley graph $\Gamma=\Gamma(\mathbb F_2,\{a,b\})$ for $\mathbb F_2=\langle a,b\rangle$.}
    \label{fig:multipush}
\end{figure}

Since supports of the pushes $h_{t\<s\>}$ are disjoint, the multipush $x_s$ is a well-defined homeomorphism of the surface.  Note that if $Hg=Hgs$, then the edge $E(Hgs^i,Hgs^{i+1})$ in the support of  $h_{g\langle s\rangle}$ is a loop. In particular, finite pushes can occur as part of a multipush. In \Cref{fig:multipush}, all pushes are infinite, but the multipush with the orange domain shown in \Cref{fig:multipush-finitepush} has both a finite and infinite push.

\begin{remark}
We emphasize that multipushes are \textit{not} induced by an action on the graph $\Gamma$, even when $\Gamma$ is a Cayley graph.  The construction simply uses the labeling of the vertices of $\Gamma$ to define the homeomorphism $\mpush_s$.
\end{remark}

\begin{figure}
    \centering
    \def\svgwidth{3in}
\begingroup%
  \makeatletter%
  \providecommand\color[2][]{%
    \errmessage{(Inkscape) Color is used for the text in Inkscape, but the package 'color.sty' is not loaded}%
    \renewcommand\color[2][]{}%
  }%
  \providecommand\transparent[1]{%
    \errmessage{(Inkscape) Transparency is used (non-zero) for the text in Inkscape, but the package 'transparent.sty' is not loaded}%
    \renewcommand\transparent[1]{}%
  }%
  \providecommand\rotatebox[2]{#2}%
  \newcommand*\fsize{\dimexpr\f@size pt\relax}%
  \newcommand*\lineheight[1]{\fontsize{\fsize}{#1\fsize}\selectfont}%
  \ifx\svgwidth\undefined%
    \setlength{\unitlength}{404.09613059bp}%
    \ifx\svgscale\undefined%
      \relax%
    \else%
      \setlength{\unitlength}{\unitlength * \real{\svgscale}}%
    \fi%
  \else%
    \setlength{\unitlength}{\svgwidth}%
  \fi%
  \global\let\svgwidth\undefined%
  \global\let\svgscale\undefined%
  \makeatother%
  \begin{picture}(1,1.0022915)%
    \lineheight{1}%
    \setlength\tabcolsep{0pt}%
    \put(0,0){\includegraphics[width=\unitlength,page=1]{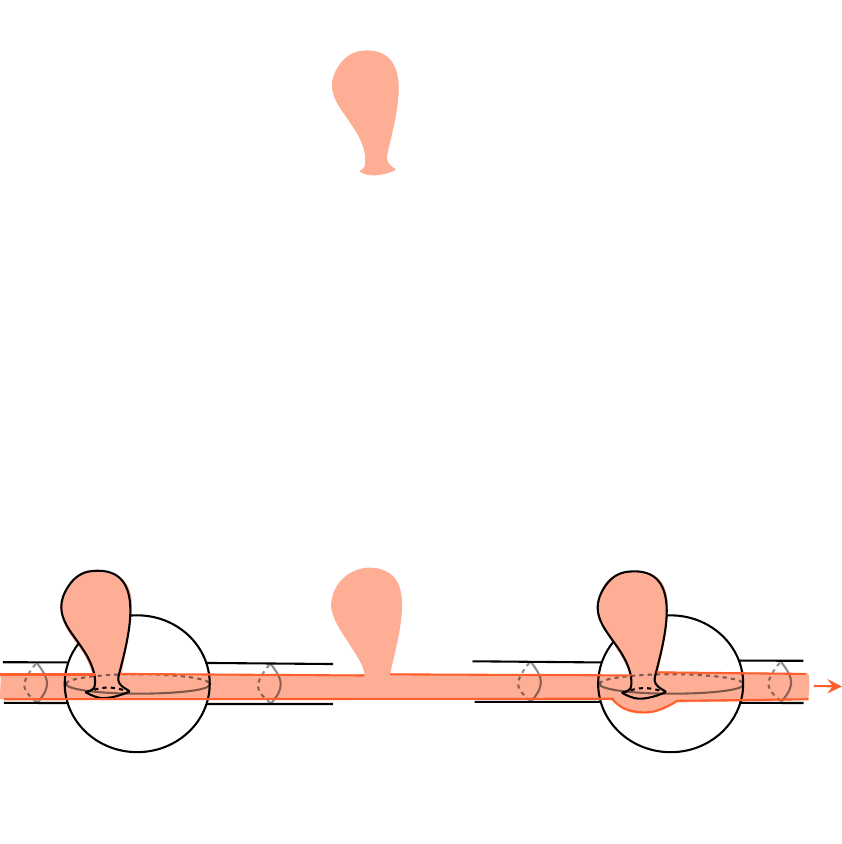}}%
    \put(0.10316864,0.26841667){\makebox(0,0)[lt]{\lineheight{1.25}\smash{\begin{tabular}[t]{l}$\Pi$\end{tabular}}}}%
    \put(0,0){\includegraphics[width=\unitlength,page=2]{Multipush-finitepush.pdf}}%
    \put(0.42300997,0.27803588){\makebox(0,0)[lt]{\lineheight{1.25}\smash{\begin{tabular}[t]{l}$\Pi$\end{tabular}}}}%
    \put(0.73804154,0.26841659){\makebox(0,0)[lt]{\lineheight{1.25}\smash{\begin{tabular}[t]{l}$\Pi$\end{tabular}}}}%
    \put(0,0){\includegraphics[width=\unitlength,page=3]{Multipush-finitepush.pdf}}%
    \put(0.42300997,0.89796257){\makebox(0,0)[lt]{\lineheight{1.25}\smash{\begin{tabular}[t]{l}$\Pi$\end{tabular}}}}%
    \put(0,0){\includegraphics[width=\unitlength,page=4]{Multipush-finitepush.pdf}}%
    \put(0.42300997,0.58915843){\makebox(0,0)[lt]{\lineheight{1.25}\smash{\begin{tabular}[t]{l}$\Pi$\end{tabular}}}}%
  \end{picture}%
\endgroup%

    \caption{A multipush on a surface corresponding to the Schreier graph on the left in \Cref{fig:SchreierExs} that contains both a finite and infinite push. The domain of the multipush is highlighted in orange.}
    \label{fig:multipush-finitepush}
\end{figure}

\subsection{Non-conjugate embeddings}\label{sec:non-conjugate}

Given a shift map $h$ corresponding to an embedding of $D_\Pi$ into a surface $S$, one can define a new and distinct shift map $h'$ on $S$ by omitting some of the surfaces $\Pi_i$ from the  domain of $h$, so long as infinitely many remain. This gives another embedding of $D_\Pi$ into $S$. See \Cref{fig:RemoveDomain}.  Since there are uncountably many infinite subsets of $\Z$, we can construct uncountably many distinct embeddings of $D_\Pi$ into $S$, and thus uncountably many distinct shift maps on $S$, in this way. The same argument goes through for one-ended shifts as well. 
 Similarly, one infinite multipush $x_s$ associated to a generator $s \in T$ on a surface $S= S_\Gamma(\Pi)$ can be used to produce uncountably many distinct domains for multipushes associated to $s$ by simply omitting some of the copies of $\Pi$ from the domains of $\mpush_{s}$ for all $s \in T$.  

\begin{figure}
    \centering
    \def\svgwidth{3in}
    \import{images/}{different-embeddings.pdf_tex}
    \caption{Two different embeddings of $D_{\Pi}$. The figure at the top corresponds to a shift $h$ and the embedding shown at the bottom corresponds to a new and distinct shift $h'$ obtained by leaving two copies of $\Pi$ out of the domain of $h$.}
    \label{fig:RemoveDomain}
\end{figure}

In many cases, these distinct domains give rise to isomorphic but non-conjugate subgroups of $\mapS$.  First, consider the case of shift maps.
Given a shift map $h$ on $S$, we define a new shift map $h'$ on $S$ by removing some copies of $\Pi$ from the domain of the shift. The groups $\langle h \rangle$ and $\langle h' \rangle$ are isomorphic subgroups of $\mapS$.  If they were conjugate, not only would $\supp(h)$ and $\supp(h')$ be homeomorphic, but their complements  $S \setminus \supp(h)$ and $S \setminus \supp(h')$ would also be homeomorphic. There are many surfaces for which this latter condition fails. For example, let $\Pi$ be a handle, i.e. a torus with one boundary component, and let $S=S_{\G}(\Pi)$ for $\G(\Z,\{1\})$, i.e. the ladder surface (see \Cref{fig:RemoveDomain}). In this case, $h=x_1$ is a shift map and $S\setminus\supp(h)$ has genus zero, while $S\setminus \supp(h')$ has nonzero genus coming from the copies of $\Pi$ that were removed from the domain of $h$. Therefore, the embeddings of $\Z$ as $\<h\>$ and $\<h'\>$ are non-conjugate in $\mapS$.  This example can be generalized by letting $\Pi$ be any surface with a countable end space and one boundary component, so long as removing copies of $\Pi$ from the domain of the shift map produces non-homeomorphic subsurfaces $S\setminus \supp(h)$ and $S\setminus\supp(h')$.  This motivates the following definition.

\begin{definition}\label{def:familyb}
 A \textit{distinguished surface} is a surface $\Pi$ with exactly one boundary component, satisfying at least one of the following: \begin{enumerate}
     \item $\Pi$ has finite genus,  
     \item $E(\Pi)$ consists of finitely many planar ends, or 
     \item $E(\Pi)$ consists of finitely many nonplanar ends.
 \end{enumerate}  For each distinguished surface $\Pi$, let $\familyb(\Pi)$ be the collection of surfaces $S$ that admit an embedding of $D_\Pi$ such that the following holds. If $\Pi$ satisfies (1), then $S \setminus D_\Pi$ has finite (possibly zero) genus. If $\Pi$ satisfies (2) or (3), then $S\setminus D_\Pi$ has finitely many planar or nonplanar ends, respectively. If $\Pi$ falls into more than one of the above categories, then $\familyb(\Pi)$ should consist of surfaces that satisfy either of the conditions on $S\setminus D_{\Pi}.$
\end{definition}

If $\Pi$ is a distinguished surface and $S\in\familyb(\Pi)$, then $S$ admits a shift $h$ with domain $D_\Pi$.  If $h'$ is another shift on $S$ whose domain is embedded by omitting finitely many copies of $\Pi$ from $D_\Pi$, then each of the three  conditions on $\Pi$ ensures that $S\setminus\supp(h)$ and  $S\setminus\supp(h')$ are not homeomorphic.  In particular, $S\setminus\supp(h)$ and  $S\setminus\supp(h')$ will have different genus or will contain a different number of planar or nonplanar ends.   Similarly, if $h'$ and $h''$ are obtained from $h$ by omitting different (finite) numbers of copies of $\Pi$ from $D_\Pi$, then the complements of their supports are not homeomorphic.

The collection $\familyb(\Pi)$ for a distinguished surface $\Pi$ is uncountable.  To see this, suppose $\Pi$ has finite genus or $E(\Pi)$ consists of finitely many nonplanar ends.  Then  $S$ can be any surface such that $S\setminus D_\Pi$ has only planar ends.  On the other hand, if $E(\Pi)$ consists of finitely many planar ends, then $S$ can be any surface so that $S\setminus D_\Pi$ has no planar ends. In either case, there are uncountably many such $S$. 

The definition of a distinguished surface $\Pi$ and the collection of surfaces $\familyb(\Pi)$ ensure that we can ``count" the number of copies of $\Pi$ that have been removed from the domain of a shift, thus producing non-conjugate embeddings.  We could expand the definition of a distinguished surface and the collection $\familyb(\Pi)$ to encompass a larger family of surfaces for which this is possible, but we choose the streamlined definition above for simplicity, while still demonstrating that our results hold for a broad class of surfaces.

We have shown that there are countably many non-conjugate infinite cyclic subgroups in $\mapS$.  In \Cref{sec:indicable}, we will use these different embeddings of $\mathbb Z$ to construct non-conjugate embeddings of indicable subgroups into $\mapS$ (\Cref{thm:indicable}). The following lemma summarizes the discussion above.

\begin{lemma}\label{lem:nonconjugate2}
    Let $S$ be any surface in the uncountable collection $\familyb(\Pi)$ for a distinguished surface $\Pi$. There exist countably many non-conjugate embeddings of the subgroup generated by the shift map on $S$ with domain $D_\Pi$ into $\mapS$. 
\end{lemma}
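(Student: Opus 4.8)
The plan is to make the countably many embeddings explicit, exactly as the discussion preceding the lemma indicates, and then separate them using a topological invariant of the complement of the support. Concretely: since $S\in\familyb(\Pi)$, fix a shift (or one-ended shift) $h$ on $S$ whose domain is an embedded copy of the model surface $D_\Pi$ realizing the defining property of $\familyb(\Pi)$, and enumerate the copies of $\Pi$ inside this domain as $\{\Pi_i\}_{i\in\Z}$. For each integer $k\ge 0$, let $h_k$ be the shift on $S$ whose domain $D_\Pi^{(k)}$ is obtained from the given one by omitting the copies $\Pi_1,\dots,\Pi_k$; thus $h_0=h$. Because $\Pi$ is distinguished and only finitely many copies are omitted, $D_\Pi^{(k)}$ is again homeomorphic to $D_\Pi$ (reindex the remaining copies; the genus and end space of $D_\Pi$ are unaffected by deleting finitely many $\Pi_i$), so $h_k$ is a bona fide shift map, and it has infinite order in $\mapS$. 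Hence $h\mapsto h_k$ defines an embedding $\iota_k\colon\langle h\rangle\hookrightarrow\mapS$, and $\{\iota_k\}_{k\ge 0}$ is a countable family; it remains to show $\iota_j$ and $\iota_k$ are non-conjugate whenever $j\ne k$.

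Suppose $\iota_j$ and $\iota_k$ are conjugate. Since $\langle h\rangle\cong\Z$ has only the inversion automorphism and reversing a shift leaves its domain (hence its support) unchanged, this means there is $\phi\in\mapS$ with $\phi h_j\phi^{-1}=h_k$ in $\mapS$. Pick a homeomorphism $\Phi$ of $S$ representing $\phi$ and let $H_j,H_k$ be the shift homeomorphisms; then $\Phi H_j\Phi^{-1}$ is again a shift map, with domain $\Phi(D_\Pi^{(j)})$, and it is isotopic to $H_k$, whose domain is $D_\Pi^{(k)}$. Here one invokes the structural fact that the domain of a shift map is well-defined up to ambient isotopy --- equivalently, that isotopic shift maps have ambiently isotopic domains --- so that $\Phi(D_\Pi^{(j)})$ is isotopic in $S$ to $D_\Pi^{(k)}$; taking complements and using that $\Phi$ is a homeomorphism of $S$ yields $\overline{S\setminus D_\Pi^{(j)}}\cong\overline{S\setminus D_\Pi^{(k)}}$. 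I expect this to be the one step that requires genuine care: it is a uniqueness-of-support statement of the kind established by Alexander-method arguments and already implicit in the theory of handle shifts (cf.\ \cite{PatelVlamis}). Everything else below is bookkeeping.

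To finish, observe that $\overline{S\setminus D_\Pi^{(k)}}$ genuinely depends on $k$: it is the disjoint union of $\overline{S\setminus D_\Pi}$ with $k$ copies of $\Pi$ (the omitted copies, which inside $S$ are attached to the remaining strip but to nothing else in the complement), so if $c_0$ denotes the number of connected components of $\overline{S\setminus D_\Pi}$ then $\overline{S\setminus D_\Pi^{(k)}}$ has exactly $c_0+k$ components. Alternatively --- and this is the packaging used in \Cref{def:familyb} --- one may instead track the genus or the number of planar or nonplanar ends of $\overline{S\setminus D_\Pi^{(k)}}$, each of which is a strictly monotone function of $k$ under the hypotheses on $\Pi$ and $S$. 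In every case, $\overline{S\setminus D_\Pi^{(j)}}\cong\overline{S\setminus D_\Pi^{(k)}}$ forces $j=k$, contradicting $j\ne k$. Therefore the embeddings $\iota_k$, $k\ge 0$, are pairwise non-conjugate, giving the desired countable family.
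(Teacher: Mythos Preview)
Your proposal is correct and follows essentially the same approach as the paper: the lemma merely summarizes the discussion preceding it, which constructs shifts $h_k$ by omitting $k$ copies of $\Pi$ from the domain and distinguishes their conjugacy classes by showing the complements $S\setminus\supp(h_j)$ and $S\setminus\supp(h_k)$ are non-homeomorphic. You are in fact more careful than the paper in two places---you explicitly handle the inversion automorphism of $\Z$ and you flag the well-definedness of the support/domain as the one genuinely delicate step (the paper simply asserts it)---and your component-count invariant is a valid alternative to the paper's genus/end count, though the latter is what \Cref{def:familyb} is designed to package.
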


We now turn our attention to constructing non-conjugate embeddings of subgroups generated by multipushes. Let $S=S_\Gamma(\Pi)$ be infinite-type, and let $x_s$ be the multipush defined by $s\in T$.  In the same way as for a shift map, by omitting copies of $\Pi$ from the domain of $x_s$ so that the complements of the supports are not homeomorphic, we obtain a non-conjugate embedding of $\langle x_s \rangle$ in $\mapS$.  

If several multipushes  $x_s$ for $s\in T$ have common copies of $\Pi$ in their supports, such as in \Cref{fig:multipushes}, more care needs to be taken.  It is possible to remove copies of $\Pi$ from the domains of all  the multipushes to obtain new multipushes $x_s'$  in such a way that $\langle x_s\mid s\in T\rangle\cong \langle x_s'\mid s\in T \rangle$ and so that the complements of the supports of the subgroups are not homeomorphic.  One way to formalize this is to consider the surface $S_m=S \connsumv \Omega_v$ where exactly $m$ of the $\Omega_v$ are homeomorphic to $\Pi$ with the boundary component capped off, and the remainder of the $\Omega_v$ are spheres. By the classification of surfaces, the surfaces $S$ and $S_m$ are homeomorphic, and this homeomorphism induces an isomorphism of mapping class groups $\mapS\cong \map(S_m)$.  Let $x_s^{(m)}$ be the multipush on the surface $S_m$ defined by $s\in T$.  Notice that $G=\langle x_s \mid s\in T\rangle$ is isomorphic to $\langle x_s^{(m)} \mid s\in T \rangle$ because they are generated by multipushes with the same supports $\pi_1$-embedded into different surfaces.  
Let $G_m\leq \mapS$ be the image of $\langle x_s^{(m)} \mid s\in T\rangle\leq \map(S_m)$ under the isomorphism of mapping class groups, so that $G\cong G_m$.  
 By construction, there are $m$ copies of $\Pi$ that are not in the support of $G_m$, while all copies of $\Pi$ are in the support of $G$, and so $G$ and $G_m$ are not conjugate.   Similarly, whenever $m\neq n$, the groups $G_m$ and $G_n$ are isomorphic and non-conjugate.  

For the remainder of the paper, when we say that we remove copies of $\Pi$ from the supports of  multipushes, we will mean that we do so in the above manner, so that the resulting groups are isomorphic.

If the end space of $\Gamma$ contains a Cantor set, then there are  uncountably many non-conjugate copies of $G$ in $\mapS$. To see this, 
use the procedure above to edit the domains of the multipush maps by removing a collection of copies of $\Pi$ that accumulate onto a closed subset of the Cantor set of ends of $S=S_\Gamma(\Pi)$.  By removing copies of $\Pi$ that accumulate onto non-homeomorphic closed subsets of the Cantor set, we obtain a non-conjugate embedding of $G$ into $\mapS$.

Above, we assumed that $S=S_\Gamma(\Pi)$.  However, the argument  applies more broadly.  For example, if $S= S_\Gamma(\Pi)\connsumv\Omega_v$ and each $\Omega_v$ has only planar ends and $\Pi$ has nonzero finite genus, then  adding two finite collections of handles of differing sizes to some $\Omega_v$ still results in the complements of the domains being non-homeomorphic subspaces.  More generally, we could let $\Pi$ be any surface with a countable end space and one boundary component (of which there are uncountably many), so long as removing two finite collections of $\Pi$ of differing cardinalities still results in the complement subsurfaces being non-homeomorphic.  This observation leads to the definition of the following family of surfaces.

\begin{definition} \label{def:familya}
    Let $\familya$ be the collection of Schreier surfaces $S= S_\Gamma(\Pi)\connsumv\Omega_v$ such that $S_\Gamma(\Pi)$ is infinite-type, $\Pi$ has a countable end space, and the surfaces $\Omega_v$ are compatible with $\Pi$ in following sense: 
    Let $Y= \bigcup_{s \in T} \supp(x_s)$, and $Y'=\bigcup_{s \in T} \supp(x_{s}')$, $Y''=\bigcup_{s \in T} \supp(x_{s}'')$, where $x_{s}'$ is obtained by moving $m$ copies of $\Pi$ out of the domain of  $x_s$ and $x_{s}''$ is obtained by moving $n$ copies of $\Pi$ out of the domain of  $x_s$, with $m\neq n$. Then $S\setminus Y'$ and $S\setminus Y''$ are non-homeomorphic in $S$. 
    
   Let $\mc B_\infty$ be the subset of $\mc B$ consisting of those Schreier surfaces built from infinite-type surfaces $\Pi$. 
\end{definition}

The above discussion demonstrates that $\familya$ is uncountable and proves the following lemma. 

\begin{lemma}\label{lem:nonconjugate}
    Let $S$ be any surface in the uncountable collection $\familya$. Letting $G$ be the subgroup of $\mapS$ generated by the multipush maps $x_s$ on $S$ for $s\in T$, there exist countably many non-conjugate copies of $G$ in $\mapS$. If the end space of $\Gamma$ contains a Cantor set, then there are  uncountably many non-conjugate copies of $G$ in $\mapS$.
\end{lemma}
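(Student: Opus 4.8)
\emph{Strategy and construction.} The plan is to exhibit, for each nonnegative integer $m$, a subgroup $G_m\le\mapS$ with $G_m\cong G$ and $G_0=G$, such that $G_m$ and $G_n$ are not conjugate in $\mapS$ whenever $m\neq n$. Fix $S=S_\Gamma(\Pi)\connsumv\Omega_v\in\familya$ and set $Y=\bigcup_{s\in T}\supp(x_s)$. Following the formalization preceding \Cref{def:familya}, I would let $S_m$ be obtained from $S$ by connect-summing, onto the backs of $m$ distinct vertex surfaces and disjointly from $Y$, $m$ copies of $\Pi$ with its boundary capped off. The compatibility of the $\Omega_v$ with $\Pi$ built into the definition of $\familya$, together with the classification theorem \Cref{thm:classification}, guarantees that $S_m$ is homeomorphic to $S$; I would fix such a homeomorphism $\varphi_m\colon S\to S_m$ and let $\Phi_m\colon\mapS\to\map(S_m)$ be the induced isomorphism of mapping class groups. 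On $S_m$ the same Schreier-graph data defines multipushes $x_s^{(m)}$ whose domains $D_s$ sit in $S_m$ exactly as they do in $S$, so the $m$ new copies of $\Pi$ lie outside $\bigcup_{s\in T}\supp(x_s^{(m)})$; since $\langle x_s^{(m)}\mid s\in T\rangle$ and $G$ are generated by multipushes supported on the same $\pi_1$-embedded subsurfaces, $x_s\mapsto x_s^{(m)}$ extends to an isomorphism. I then set $G_m:=\Phi_m^{-1}\bigl(\langle x_s^{(m)}\mid s\in T\rangle\bigr)\le\mapS$, so $G_m\cong G$, and note that under $\varphi_m$ the complement of $\bigcup_{s}\supp(x_s^{(m)})$ corresponds to the subsurface denoted $S\setminus Y'$ in \Cref{def:familya} with $m$ copies of $\Pi$ moved out.

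\emph{Non-conjugacy, the main obstacle.} The key point is that the homeomorphism type of the complement of the support of a multipush subgroup is a conjugacy invariant. I would argue this via minimal supports: the union $\bigcup_{s}D_s$ of the domains is, up to isotopy, the smallest subsurface $\Sigma$ of $S$ such that every element of $\langle x_s\mid s\in T\rangle$ is isotopic to a homeomorphism supported in $\Sigma$, because a push cannot be isotoped off any copy of $\Pi$ that it moves, so no smaller subsurface supports a nontrivial element. This minimal supporting subsurface is a conjugacy invariant: if $F$ is a homeomorphism of $S$ representing a class $f$ with $fG_mf^{-1}=G_n$, then $F$ carries the minimal supporting subsurface $\Sigma_m$ of $G_m$ to one isotopic to the minimal supporting subsurface $\Sigma_n$ of $G_n$, and hence $S\setminus\Sigma_m$ is homeomorphic to $S\setminus\Sigma_n$. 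But by the defining property of $\familya$ these are the non-homeomorphic surfaces $S\setminus Y'$ and $S\setminus Y''$ obtained by moving out $m\neq n$ copies of $\Pi$ — they differ in genus, or in the number or accumulation pattern of planar or nonplanar ends, depending on which condition in \Cref{def:familya} the surfaces satisfy. This contradiction shows that the $G_m$ are pairwise non-conjugate, so there are countably many non-conjugate copies of $G$ in $\mapS$. Making the minimal-supporting-subsurface claim fully rigorous — in particular, that it exists, is unique up to isotopy, and transforms correctly under conjugation — is the part of the argument I expect to require the most care.

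\emph{The uncountable case.} Suppose now that $E(\Gamma)$, and hence $E(S)$, contains a Cantor set. For each closed subset $C$ of this Cantor set I would repeat the construction, this time connect-summing infinitely many capped-off copies of $\Pi$ onto vertex surfaces $V_v$ whose labels accumulate precisely onto $C$; as before the resulting surface is homeomorphic to $S$, the resulting subgroup $G_C\le\mapS$ is isomorphic to $G$, and the complement of its minimal supporting subsurface carries a distinguished closed subspace of ends — or of ends accumulated by genus — homeomorphic to $C$. Since a Cantor set has uncountably many pairwise non-homeomorphic closed subsets, the same minimal-support argument shows that the subgroups $G_C$ realize uncountably many non-conjugate copies of $G$ in $\mapS$.
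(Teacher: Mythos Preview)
Your proposal is correct and follows essentially the same approach as the paper: construct $S_m$ by connect-summing $m$ capped-off copies of $\Pi$ onto backs of vertex surfaces, transport the multipush subgroup back to $\mapS$ via a homeomorphism $S\cong S_m$, and distinguish conjugacy classes by the homeomorphism type of the complement of the support; the uncountable case is handled identically by letting the added copies of $\Pi$ accumulate onto non-homeomorphic closed subsets of the Cantor set of ends. Your discussion of a minimal supporting subsurface makes explicit a step the paper leaves implicit (it simply asserts that conjugate subgroups would have homeomorphic support complements), so your self-flagged ``main obstacle'' is exactly the point the paper treats as evident rather than a divergence in strategy.
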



\subsection{Non-isometric embeddings}\label{sec:non-isometric}
Throughout the paper, all constructions of subgroups will utilize push and multipush maps.  If the complement of the domain of a (multi)push is not simply connected, then the map cannot act as an isometry for any hyperbolic metric on $S$.   We can use the collection of subsurfaces $\{\Omega_v\}$  from the construction of a Schreier surface $S$ to ensure this condition holds, and so all of our constructions can produce subgroups that are not contained in the isometry group of $S$ for any hyperbolic metric on $S$. 

For many surfaces, this is not simply an artifact of our particular construction.  By choosing the collection  $\{\Omega_v\}$ carefully, we can often ensure that the resulting surface $S$ has a non-displaceable subsurface, and hence its isometry group (with respect to any hyperbolic metric) contains only finite groups \cite[Lemma 4.2]{APV2}.  In particular, the groups we construct could not arise from a construction using isometries for any such surface.

\section{Free groups, wreath products, and Baumslag-Solitar groups}\label{sec:subgroupconstructions}
In this section, we use shift maps and multipushes to construct free groups, certain wreath products, and solvable Baumslag-Solitar groups as subgroups of big mapping class groups.

\subsection{Free groups}\label{sec:free}
The  construction of  Schreier surfaces from \Cref{sec:surfaces}
was motivated by the following construction of a free subgroup of intrinsically infinite type. 

\begin{example}\label{ex:freegroup}
Let $\Gamma$ be the Cayley graph of the free group $\mathbb{F}_2=\<a,b\>$, which is the Schreier graph $\Gamma(\mathbb F_2,\{a,b\},\{\text{id}\})$, and build the Schreier surface $S = S_{\Gamma}(\Pi)$ with $\Pi$ a torus with one boundary component. See \Cref{fig:multipush}. This Schreier surface is homeomorphic to the blooming Cantor tree, that is, the surface with no boundary components, no planar ends, and a Cantor set of nonplanar ends.
The multipushes $x_a$ and $x_b$ generate a copy of $\mathbb{F}_2$ in $\pmap(S)$. To see this, observe that for any $g\in \<a,b\>$, 
the multipush $x_a$ maps 
$\Pi_g$ to $\Pi_{ga}$, 
and similarly for $x_b$. Thus, the only way for a word $w\in\<x_a,x_b\>$ to act trivially on the surface is if the corresponding word in $\<a,b\>$ is trivial. Moreover, \Cref{rem:infinitetype} shows that this copy of $\mathbb{F}_2$ in $\pmapS$ is not contained in $\overline \mapcS$.
\end{example}

In this example, it is straightforward to prove that a non-trivial word $w\in\langle x_a,x_b\rangle$ acts non-trivially on the surface because $\mathbb{F}_2$ has no relations and  
$\Gamma$ is a tree,  so we only need to track where $w$ sends $\Pi_{\text{id}}$. With a more nuanced analysis of the action of $w$, however, we can show that multipushes generate a free group in a much more general setting. 
Recall that the collection $\familya$ of Schreier surfaces was defined in \Cref{def:familya}.

\begin{theorem}\label{thm:freegroup}
	Let $\Gamma$ be a Schreier graph for a triple $(G, T, H)$ and $S$ any associated Schreier surface.  The set $\{\mpush_\alpha \mid \alpha \in T\}$ generates a free group of rank $|T|$ in $\map(S)$. If $\lvert T \rvert=1$ and $\Gamma$ is finite, then we require that at least one $\Omega_v$ is not  a sphere. 
	
	Moreover, when $S \in \familya$, there exist countably many non-conjugate embeddings of such a free group in $\mapS$, none of which can lie entirely in the isometry group for any hyperbolic metric on $S$. If $S$ is not finite type and not the Loch Ness monster surface, these free groups  cannot be completely contained in $\overline \mapcS$.
\end{theorem}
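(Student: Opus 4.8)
The plan is to prove that $\{x_\alpha \mid \alpha \in T\}$ generates a free group by a ping-pong–style argument that tracks the action of a word $w$ in these multipushes on the copies $\Pi_{Hg}$ of $\Pi$ sitting inside the Schreier surface $S$. Write a nontrivial reduced word $w = x_{\alpha_k}^{\pm n_k} \cdots x_{\alpha_1}^{\pm n_1}$ with each $n_i \geq 1$ and $\alpha_{i+1} \neq \alpha_i$. The key observation is that the multipush $x_\alpha$ sends $\Pi_{Hg}$ to $\Pi_{Hg\alpha}$ (or, along a finite push, cyclically through the copies lying over a loop component), so the \emph{combinatorial} effect of $w$ on the set of basepoints is governed entirely by the corresponding word $\bar w = \bar\alpha_k^{\pm n_k}\cdots \bar\alpha_1^{\pm n_1}$ acting on vertices of $\Gamma$ by right multiplication. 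First I would fix a vertex $Hg_0$ whose orbit under the relevant cyclic subgroups is ``large enough'' that no wraparound from a finite push collapses the word — concretely, choose $g_0$ so that for each $i$ the partial products $g_0 \bar\alpha_1^{\pm n_1}\cdots\bar\alpha_i^{\pm n_i}$ all lie in distinct cosets; such a $g_0$ exists because only finitely many constraints are imposed and, along each $\langle s\rangle$-component that is infinite, the orbit is infinite, while along a finite ($n$-periodic) component one simply takes $|n_i|$ smaller than the period or avoids that component. With such a basepoint, $w$ moves $\Pi_{Hg_0}$ to $\Pi_{Hg_0\bar w}$, and since consecutive generators differ there is no cancellation of the pushes' supports, so $\Pi_{Hg_0\bar w} \neq \Pi_{Hg_0}$ and $w$ acts nontrivially. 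This shows $w \neq 1$ in $\map(S)$, giving freeness of rank $|T|$; the exceptional hypothesis when $|T|=1$ and $\Gamma$ is finite is exactly what rules out the case where the single multipush is a finite shift on an annulus $A_\Pi$ with no genus or ends to obstruct it — if some $\Omega_v$ is nontrivial, then the finite shift has infinite order because it permutes a nontrivial subsurface cyclically while acting nontrivially on $\pi_1$.

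Next I would assemble the ``moreover'' clauses by citing the work already done in Section~\ref{sec:constructions}. The countably many non-conjugate embeddings come directly from \Cref{lem:nonconjugate}: since $S \in \familya$, removing $m$ versus $n$ copies of $\Pi$ from the domains of all the $x_\alpha$ simultaneously (in the capping-off sense formalized before \Cref{def:familya}) yields isomorphic free subgroups $G_m \not\cong_{\text{conj}} G_n$ for $m \neq n$, because $S \setminus Y_m$ and $S \setminus Y_n$ are non-homeomorphic. For the non-isometric statement, invoke Section~\ref{sec:non-isometric}: by choosing the $\Omega_v$ so that the complement of $\bigcup_\alpha \supp(x_\alpha)$ is not simply connected, no $x_\alpha$ can be an isometry of any hyperbolic metric on $S$, hence the free group cannot lie entirely in $\Isom(S,\rho)$ for any such $\rho$. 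Finally, the intrinsically-infinite-type claim follows from \Cref{rem:infinitetype}: if $\Pi$ has nontrivial end space then each infinite multipush already fails to lie in $\pmap(S) \supseteq \overline{\mapcS}$; if $\Pi$ has finite genus and no planar ends then a shift/multipush is a power of a handle shift, and \cite[Proposition 6.3]{PatelVlamis} places it outside $\overline{\mapcS}$ — the only way to avoid this is for every component to be a one-ended handle shift or for $S$ to be finite type or the Loch Ness monster, which are precisely the excluded cases (there $\overline{\mapcS} = \map(S)$, so the statement is vacuous).

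The main obstacle I anticipate is the bookkeeping in the first paragraph around \emph{finite} pushes inside a multipush: when a $\langle s\rangle$-component of $\Gamma$ is a finite cycle of length $\ell$, the push $h_{t\langle s\rangle}$ has order-$\ell$ behavior on the \emph{combinatorial} level even though it has infinite order as a mapping class (it drags genus/ends around the cycle, permuting a nontrivial $\Pi$-fragment), so one cannot detect nontriviality of $w$ purely from the vertex permutation and must instead track the induced action on $\pi_1(S)$ or on the isotopy classes of the embedded $\Pi$-fragments. The clean fix is a two-tiered argument: if $\bar w$ acts nontrivially on \emph{some} vertex of $\Gamma$ we are done as above; if $\bar w$ fixes every vertex (which can only happen when all relevant components are finite cycles and the exponents conspire), then $w$ is a product of finite shifts whose combined effect is to cyclically permute nontrivial subsurfaces $\Pi_v$, and one shows this is nontrivial in $\map(S)$ by examining its action on an appropriate simple closed curve or arc that threads through those subsurfaces — here the hypothesis that some $\Omega_v$ is nontrivial (in the $|T|=1$ finite case) or the infinite-type structure (in general) guarantees the needed nontriviality. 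I would make sure this dichotomy is stated cleanly before diving into either branch.
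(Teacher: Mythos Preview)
Your first branch---detecting nontriviality of $x_w$ by finding a coset $Hg_0$ with $Hg_0 w \neq Hg_0$---is exactly the opening move of the paper's proof, and the ``moreover'' clauses are correctly assembled from \Cref{lem:nonconjugate}, Section~\ref{sec:non-isometric}, and \Cref{rem:infinitetype}.

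The real gap is your second branch. Your claim that $\bar w$ fixing every vertex ``can only happen when all relevant components are finite cycles'' is false: take $\Gamma$ to be the Cayley graph of $\Z^2=\langle a,b\rangle$, where every $\langle s\rangle$-component is a biinfinite line, yet $w=aba^{-1}b^{-1}$ fixes every vertex. In this situation $x_w$ returns each $\Pi_v$ to itself and restricts to the identity on every vertex surface---it is \emph{not} ``cyclically permuting nontrivial subsurfaces'' as you write---so its nontriviality is supported entirely in the edge-surface region and is invisible to any argument that only tracks which $\Pi_{Hg}$ goes where. Your proposed fix of threading a curve through the subsurfaces is not developed, and this is precisely the delicate case (illustrated in the paper by \Cref{fig:loopimage}).

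The paper handles this by a different mechanism altogether: for $|T|\geq 2$ it passes to the universal cover $p\colon\tilde\Gamma\to\Gamma$, which is the $2|T|$-valent tree (the Cayley graph of the free group on $T$), builds the associated covering Schreier surface $P\colon\tilde S\to S$, and lifts the multipushes to $\tilde x_t$. If $x_w$ were trivial in $\map(S)$, then $\tilde x_w$ would be a deck transformation of $P$; but $\tilde x_w$, being a product of $|w|$ multipushes, displaces every vertex surface of $\tilde S$ by at most $|w|$ edges, whereas any nontrivial deck transformation of the tree cover has unbounded displacement on vertices. This bounded-versus-unbounded-displacement contradiction is the key idea missing from your proposal, and it is what replaces the undeveloped curve argument in your second branch.
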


\begin{proof}
Let $w=t_1\ldots t_k$ be a nontrivial, freely reduced word in the free group generated by the set $T$, and let $x_w:=x_{t_k}\cdots x_{t_1}$ be the product of multipushes. We aim to show that $x_w$ is nontrivial in $\mapS$. We first observe that if $\mpush_w(\Pi_{Hg})=\Pi_{Hgw}\neq \Pi_{Hg}$ for any coset $Hg$, then $\mpush_w$ is nontrivial in $\mapS$. We may therefore assume that $\mpush_w$ returns each $\Pi_{Hg}$ to itself.  In particular, this implies that $Hgw=Hg$ for all $g\in G$, and so the edge path given by labels $(t_1, \dots, t_k)$ in $\Gamma(G,T,H)$ based at any vertex describes a cycle.

First consider a one-generated group $G$. If $\Gamma$ is infinite, then we must have $H=\{\text{id}\}$, in which case $\Gamma(G, T, H)=\Gamma(\Z,  \{1\},  \{\operatorname{id}\})$ is the Cayley graph of $\Z$ with its standard generator.  Since this graph has no cycles, each element $x_w$ with $w\in G$ is non-trivial in $\mapS$.  
On the other hand, suppose $\Gamma$ is a finite cycle of order $k$, and consider the multipush $x_t$, where  $t$ is the generator of $G$. Then, $x_t^k$ represents a cycle in $\Gamma$, but the requirement that some $\Omega_i$ is not a sphere guarantees that the curve $\gamma$ and $\mpush_t^k(\gamma)$ cobound a surface with non-trivial topology. See \Cref{fig:non-trivial} for the case $k=3$. Thus $\gamma$ and $\mpush_t^k (\gamma)$ are not homotopic, so $\mpush_t^k$ is non-trivial and $\<x_t\>\cong \Z$. 

\begin{figure} 
    \centering
    \def\svgwidth{3in}
\begingroup%
  \makeatletter%
  \providecommand\color[2][]{%
    \errmessage{(Inkscape) Color is used for the text in Inkscape, but the package 'color.sty' is not loaded}%
    \renewcommand\color[2][]{}%
  }%
  \providecommand\transparent[1]{%
    \errmessage{(Inkscape) Transparency is used (non-zero) for the text in Inkscape, but the package 'transparent.sty' is not loaded}%
    \renewcommand\transparent[1]{}%
  }%
  \providecommand\rotatebox[2]{#2}%
  \newcommand*\fsize{\dimexpr\f@size pt\relax}%
  \newcommand*\lineheight[1]{\fontsize{\fsize}{#1\fsize}\selectfont}%
  \ifx\svgwidth\undefined%
    \setlength{\unitlength}{196.79280113bp}%
    \ifx\svgscale\undefined%
      \relax%
    \else%
      \setlength{\unitlength}{\unitlength * \real{\svgscale}}%
    \fi%
  \else%
    \setlength{\unitlength}{\svgwidth}%
  \fi%
  \global\let\svgwidth\undefined%
  \global\let\svgscale\undefined%
  \makeatother%
  \begin{picture}(1,0.89207392)%
    \lineheight{1}%
    \setlength\tabcolsep{0pt}%
    \put(0.46499545,0.78969876){\makebox(0,0)[t]{\lineheight{1.25}\smash{\begin{tabular}[t]{c}$\Pi$\end{tabular}}}}%
    \put(0,0){\includegraphics[width=\unitlength,page=1]{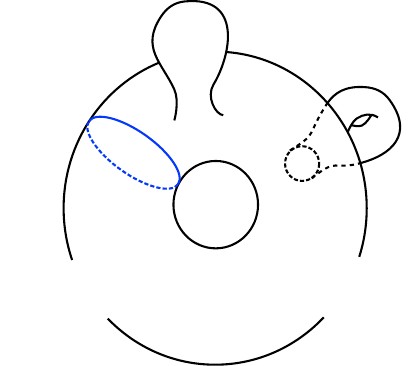}}%
    \put(0.18654448,0.14800438){\makebox(0,0)[t]{\lineheight{1.25}\smash{\begin{tabular}[t]{c}$\Pi$\end{tabular}}}}%
    \put(0.86250734,0.14800438){\makebox(0,0)[t]{\lineheight{1.25}\smash{\begin{tabular}[t]{c}$\Pi$\end{tabular}}}}%
    \put(0,0){\includegraphics[width=\unitlength,page=2]{non-trivial.pdf}}%
    \put(0.17029403,0.61647685){\makebox(0,0)[lt]{\lineheight{1.25}\smash{\begin{tabular}[t]{l}{\color{blue}$\gamma$}\end{tabular}}}}%
    \put(-0.00515096,0.36771546){\makebox(0,0)[lt]{\lineheight{1.25}\smash{\begin{tabular}[t]{l}{\color{red}$x_t^3(\gamma)$}\end{tabular}}}}%
  \end{picture}%
\endgroup%

    \caption{A surface $S$ built from the Cayley graph of $\Z/3\Z=\langle t\rangle$. The curve $\gamma$ is not homotopic to  its image under $x_t^3$  due to the handle on the back of $S$.}
    \label{fig:non-trivial}
\end{figure}

Now assume $|T|=n\geq 2$, so that every vertex of $\Gamma(G, T, H)$ has degree $2n\geq 4$. Let $p\colon \tilde{\Gamma}\arr \G$ be the universal cover of the labelled graph $\Gamma$, which is a tree of valency $2n$ with edge labels in the set $T$. Construct the Schreier surface $\tilde S=S_{\tilde{\Gamma}}(\Pi) \,\underset{\tilde{v}\in V(\tilde{\G})}{\#}\, \Omega_{\tilde{v}}$, where $\Omega_{\tilde{v}}=\Omega_{v}$ whenever $p(\tilde{v})=v$. By construction, $\tilde S$ is a cover of $S=S_{\Gamma}(\Pi)\connsumv \Omega_v$. See \Cref{fig:CoveringSurface} for an example.

For each $t\in T$, let $\tilde{x}_t$ be the multipush on $\tilde S$ 
obtained by identifying $\tilde{\Gamma}$ with the Cayley graph of the free group with basis $T$. The covering map $P\colon \tilde S\rightarrow S$ 
induces a homomorphism from the group generated by the multipushes on $\tilde S$ 
 to the group generated by the multipushes on $S$ 
 by mapping $\tilde{x}_t\mapsto x_t$. Recall that, by assumption, $w=t_1\ldots t_k$ is a non-trivial reduced word in the free generating set $T$ and  $x_w=x_{t_k}\dots x_{t_1}$. Let $\tilde{x}_w=\tilde{x}_{t_k}\dots \tilde{x}_{t_1}$. 

Suppose towards a contradiction that $x_w$ is trivial in $\mapS$.  Then the following commutative diagram of homeomorphisms shows that $\tilde{x}_w$ is a deck transformation.

\[
\begin{tikzcd}
S_{\tilde{\Gamma}}(\Pi) \arrow{r}{\tilde{x}_w} \arrow[swap]{d}{P} & S_{\tilde{\Gamma}}(\Pi) \arrow{d}{P} \\
S_{\Gamma}(\Pi) \arrow{r}{x_w = \text{id}} & S_{\Gamma}(\Pi)
\end{tikzcd}
\]

\begin{figure}[h] 
    \centering
    \def\svgwidth{8in}
    \import{images/}{CoveringSurface.pdf_tex}
    \caption{An example of the surface $\tilde S=S_{\tilde\Gamma}(\Pi)$ and lifts of multipushes $x_a,x_b$. }
    \label{fig:CoveringSurface}
\end{figure}

On the other hand, since $\tilde{x}_w$ is a multipush, it moves  every vertex surface of $\tilde S$ 
at most $k$ steps away from itself, a bounded distance. We claim this is a contradiction. Indeed, as the covering map sends vertex surfaces to vertex surfaces and edge surfaces to edge surfaces, respecting the edge labels in $T$, we see that any deck transformation of $P\colon \tilde S \arr S$ 
is determined by a deck transformation of the covering $p\colon  \tilde{\Gamma}\rightarrow \Gamma$. One readily checks that for any such nontrivial deck transformation and for all $j\geq 1$, there exists a vertex $v$ in the tree $\tilde\Gamma$ such that the distance from $v$ to its image is larger than $j$, and we have obtained our contradiction.

When $S\in \familya$, it follows from \Cref{lem:nonconjugate} that there are countably many non-conjugate embeddings of the free group $\mathbb F_{|T|}$ in $\mapS$.  By the argument in \Cref{sec:non-isometric}, none of these embeddings lie in the isometry group for any hyperbolic metric on $S$.  Finally, when $S$ is not finite-type or the Loch Ness Monster (in which case $\overline{\mapcS}=\mapS$), each multipush in the argument above is a collection of shift maps, so  \Cref{rem:infinitetype} completes the proof. \end{proof}

    It follows from the proof of \Cref{thm:freegroup} that the support of every non-trivial element of $\mathbb F_{|T|}$ is not contained in the union of the vertex surfaces.  This is clear if $w$ does not fix every $\Pi_{Hg}$, because the shift domains are contained in the support of $x_w$.  On the other hand, suppose $w$ fixes each $\Pi_{Hg}$.  Since $x_w$ is a collection of pushes, it  therefore restricts to the identity on each vertex surface.  However, the proof of the theorem shows that $x_w$ is a non-trivial homeomorphism, and so the support of $x_w$ cannot be contained in the union of the vertex surfaces. See \Cref{fig:loopimage} for an example of what the image of a loop $\gamma$ might look like after the application of $x_w$ when $w$ is trivial in $G$.  This will be a crucial ingredient in the proof of \Cref{cor:otherS}.

     \begin{figure}
     \centering\def\svgwidth{5in}
\begingroup%
  \makeatletter%
  \providecommand\color[2][]{%
    \errmessage{(Inkscape) Color is used for the text in Inkscape, but the package 'color.sty' is not loaded}%
    \renewcommand\color[2][]{}%
  }%
  \providecommand\transparent[1]{%
    \errmessage{(Inkscape) Transparency is used (non-zero) for the text in Inkscape, but the package 'transparent.sty' is not loaded}%
    \renewcommand\transparent[1]{}%
  }%
  \providecommand\rotatebox[2]{#2}%
  \newcommand*\fsize{\dimexpr\f@size pt\relax}%
  \newcommand*\lineheight[1]{\fontsize{\fsize}{#1\fsize}\selectfont}%
  \ifx\svgwidth\undefined%
    \setlength{\unitlength}{642.84468894bp}%
    \ifx\svgscale\undefined%
      \relax%
    \else%
      \setlength{\unitlength}{\unitlength * \real{\svgscale}}%
    \fi%
  \else%
    \setlength{\unitlength}{\svgwidth}%
  \fi%
  \global\let\svgwidth\undefined%
  \global\let\svgscale\undefined%
  \makeatother%
  \begin{picture}(1,0.47242891)%
    \lineheight{1}%
    \setlength\tabcolsep{0pt}%
    \put(0,0){\includegraphics[width=\unitlength,page=1]{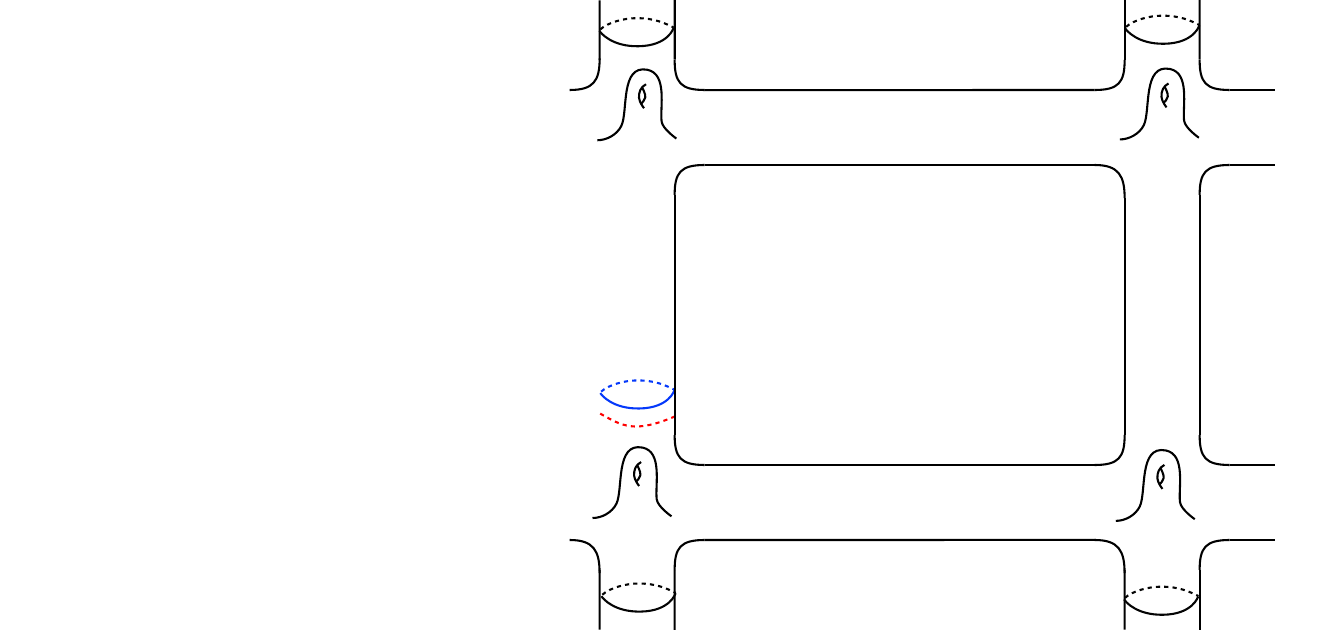}}%
    \put(0.51661554,0.17525578){\makebox(0,0)[lt]{\lineheight{1.25}\smash{\begin{tabular}[t]{l}{\color{blue}$\gamma$}\end{tabular}}}}%
    \put(0.75415214,0.37669711){\makebox(0,0)[lt]{\lineheight{1.25}\smash{\begin{tabular}[t]{l}{\color{red} $x_w(\gamma)$}\end{tabular}}}}%
    \put(0,0){\includegraphics[width=\unitlength,page=2]{schreir.pdf}}%
  \end{picture}%
\endgroup%

     \caption{A portion of the Schreier surface for $(\Z^2,\{a,b\},\{1\})$ and the image of the curve $\gamma$ under the element $x_{bab^{-1}a^{-1}}$.}
    \label{fig:loopimage}
\end{figure}

\subsection{Shift Maps that do not generate a free group}\label{sec:notfree}
The construction above uses a countable collection of intersecting push maps to ensure the resulting group is free. The following example demonstrates why this is necessary by showing that the group generated by two shift maps with minimal intersection is not free. We use the convention that $[x,y]=xyx\inv y\inv$ and choose a right action.

\begin{figure}
    \centering
    \def\svgwidth{2.5in}
\begingroup%
  \makeatletter%
  \providecommand\color[2][]{%
    \errmessage{(Inkscape) Color is used for the text in Inkscape, but the package 'color.sty' is not loaded}%
    \renewcommand\color[2][]{}%
  }%
  \providecommand\transparent[1]{%
    \errmessage{(Inkscape) Transparency is used (non-zero) for the text in Inkscape, but the package 'transparent.sty' is not loaded}%
    \renewcommand\transparent[1]{}%
  }%
  \providecommand\rotatebox[2]{#2}%
  \newcommand*\fsize{\dimexpr\f@size pt\relax}%
  \newcommand*\lineheight[1]{\fontsize{\fsize}{#1\fsize}\selectfont}%
  \ifx\svgwidth\undefined%
    \setlength{\unitlength}{325.88591343bp}%
    \ifx\svgscale\undefined%
      \relax%
    \else%
      \setlength{\unitlength}{\unitlength * \real{\svgscale}}%
    \fi%
  \else%
    \setlength{\unitlength}{\svgwidth}%
  \fi%
  \global\let\svgwidth\undefined%
  \global\let\svgscale\undefined%
  \makeatother%
  \begin{picture}(1,0.9279277)%
    \lineheight{1}%
    \setlength\tabcolsep{0pt}%
    \put(0,0){\includegraphics[width=\unitlength,page=1]{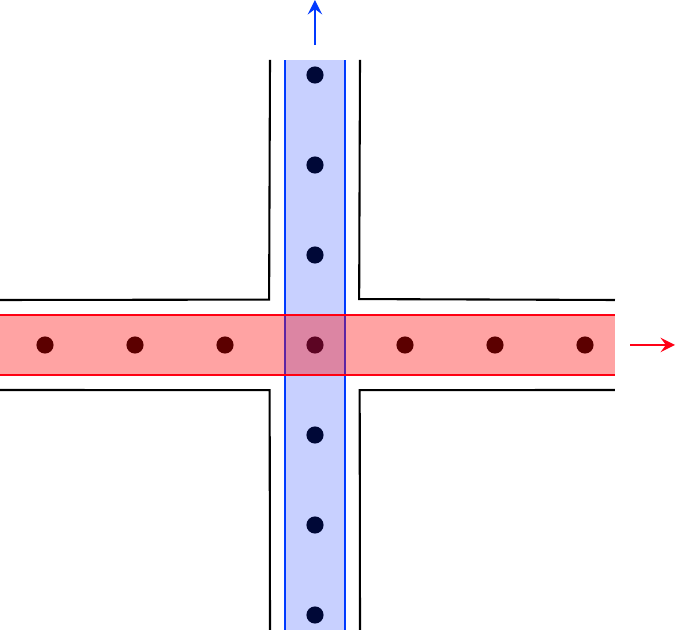}}%
    \put(0.93776879,0.36797148){\makebox(0,0)[lt]{\lineheight{1.25}\smash{\begin{tabular}[t]{l}{\color{red}$h_a$}\end{tabular}}}}%
    \put(0.48653599,0.88456994){\color[rgb]{0.91372549,0.68627451,0.68627451}\makebox(0,0)[lt]{\lineheight{1.25}\smash{\begin{tabular}[t]{l}{\color{blue}$h_b$}\end{tabular}}}}%
    \put(0.41695337,0.36375437){\makebox(0,0)[lt]{\lineheight{1.25}\smash{\begin{tabular}[t]{l}\tiny$(0,0)$\end{tabular}}}}%
  \end{picture}%
\endgroup%

    \caption{The shifts $h_a$ and $h_b$ do not generate a free group.}
    \label{fig:notfree}
\end{figure}

Let $\Gamma$ be the four-ended tree with a single vertex of valence four and all other vertices of valence two. Identify $\G$ with the coordinate axes in $\R^2$ to get a labeling of the vertices as integer coordinates. Let $\Pi$ be any surface with one boundary component that is not a disk,  and construct the surface $S=S_{\G}(\Pi)$. There is a horizontal shift $h_{a}$ corresponding to the $+(1,0)$ map on the $x$--axis and a vertical shift $h_{b}$ corresponding to the $+(0,1)$ map on the $y$--axis, as shown in \Cref{fig:notfree}. The intersection of the supports of these shifts is contained in the front of $V_{(0,0)}$, the vertex surface at $(0,0)$. It can be checked that the support of $[h_a,h_b]$ is contained in the fronts of $V_{(-1,0)}$, $V_{(0,0)}$, and $V_{(0,-1)}$ and the adjoining edge surfaces. The word $w=h_ah_bh_a^2$ maps $\{\Pi_{(0,-1)}, \Pi_{(-1,0)}, \Pi_{(0,0)}\}$ to the collection $\{ \Pi_{(1,0)}, \Pi_{(2,0)}, \Pi_{(3,0)} \}$. Thus, the elements $[h_a,h_b]$ and $w[h_a,h_b]w\inv$ have disjoint supports and so commute.  More generally, the words $w_n= h_a^{3n+1}h_bh_a^{2}$ map $\{\Pi_{(0,-1)}, \Pi_{(-1,0)}, \Pi_{(0,0)}\}$ to $\{ \Pi_{(1+3n,0)}, \Pi_{(2+3n,0)}, \Pi_{(3+3n,0)}\}$. From this, we see that $H:=\<h_a,h_b\>$ is not a free group and actually contains copies of $\Z^n$ for all $n$.  

In fact, $H$ is isomorphic to a 2--generated subgroup of an infinite strand braid group.  To see this, note that the group structure of $H$ is not dependent on the surface $\Pi$ that we attach, so we may assume $\Pi$ is a punctured disk. We can also realize each shift domain as a disk with countably many punctures with two distinct accumulation points on the boundary. Because braid groups are mapping class groups of punctured disks, this viewpoint allows us to realize $H$ as a subgroup of the infinite strand braid group in which braids are allowed to have non-compact support. In particular, $H$ is isomorphic to the subgroup of this braid group generated by the elements $h_a$ and $h_b$, viewed as braids with non-compact support.

\subsection{Wreath products} \label{sec:wreath}

Recall that if $H$ acts on a set $\Lambda$, then the (restricted) wreath product $G\wr_{\Lambda} H$ is defined as
\[
G\wr_{\Lambda} H= G^{\Lambda} \rtimes_\gamma H,
\] 
that is, the semidirect product of $H$ with the direct sum of copies of $G$ indexed by $\Lambda$. Here, $G^\Lambda=\oplus_{\Lambda} G$ and is the set of $(g_\lambda)_{\lambda\in\Lambda}$. The automorphism $\gamma\colon H\to \Aut(G^\Lambda)$ is defined by $\gamma(h)(G_\lambda)=hG_\lambda h^{-1}=G_{h\lambda}$, so that $H$ acts on $G^\Lambda$ by permuting the coordinates according to the action on the indices. When it is clear from context, or when $\Lambda=H$, we may simply write $G\wr H$.  

We now construct a collection of wreath products in big mapping class groups. The most straightforward example of this construction is when $S$ is a surface which admits a shift  whose domain is an embedded copy of $D_\Pi$ for some surface $\Pi$ with one boundary component. For any $G\leq \map(\Pi)$, we generalize a construction of Lanier and Loving \cite{LanierLoving} to construct $G\wr \Z$ as a subgroup of $\map(S)$. When $G$ is chosen to be the infinite cyclic group generated by a single Dehn twist,  we recover \cite[Theorem 4]{LanierLoving}.

\begin{proposition}\label{prop:GwrHgeneral}
    Let $G\leq \map(\Pi)$, where $\Pi$ is a surface with a single boundary component.  Let $S$ be a surface and $H\leq \mapS$ be generated by a finite collection of pushes and multipushes, all of whose domains are (unions of) embedded copies of $A_\Pi$ or $D_{\Pi}$. Index the copies of $\Pi$  in these domains by $\Lambda$.  The wreath product $G\wr_{\Lambda} H$ is a subgroup of $\mapS$.
\end{proposition}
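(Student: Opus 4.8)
The plan is to realize the two pieces of the wreath product $G \wr_\Lambda H = G^\Lambda \rtimes_\gamma H$ as explicit, compatible subgroups of $\mapS$ and check the semidirect product relations. First I would place a copy of $G$ inside the mapping class group of each individual surface $\Pi_\lambda$: for each $\lambda \in \Lambda$, the copy $\Pi_\lambda$ of $\Pi$ sitting inside one of the domains $A_\Pi$ or $D_\Pi$ is an embedded subsurface of $S$ with one boundary component, and extending homeomorphisms of $\Pi_\lambda$ by the identity gives an injection $\map(\Pi) \hookrightarrow \mapS$ (injectivity follows because $\Pi_\lambda$ is $\pi_1$-injectively embedded — it carries genuine topology and its boundary curve is essential — so a nontrivial mapping class of $\Pi_\lambda$ remains nontrivial in $\mapS$). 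Denote by $G_\lambda \leq \mapS$ the image of $G$ under this inclusion. Since the subsurfaces $\Pi_\lambda$ for distinct $\lambda$ are pairwise disjoint, the subgroups $G_\lambda$ pairwise commute and, again by the $\pi_1$-injectivity together with disjointness of supports, the subgroup they generate is the \emph{direct sum} $\bigoplus_{\lambda \in \Lambda} G_\lambda \cong G^\Lambda$ (a product of elements from distinct $G_\lambda$ is trivial only if each factor is, because it acts nontrivially on the respective $\pi_1(\Pi_\lambda)$ otherwise).

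Next I would record the action of $H$. Each generator of $H$ is a push or multipush whose domain is a union of copies of $A_\Pi$ or $D_\Pi$, hence by construction it permutes the collection $\{\Pi_\lambda\}_{\lambda\in\Lambda}$ of attached copies of $\Pi$; this induces an action of $H$ on the index set $\Lambda$. A key observation is that these push maps can be (and, in the shift/finite-shift construction, are) taken to be the identity on a neighborhood of each $\Pi_\lambda$ itself — a push taper-shifts the strip/annulus part but maps each $\Pi_\lambda$ rigidly (by the identity in local coordinates) onto $\Pi_{h\cdot\lambda}$. Consequently, for $h \in H$ and $f \in G_\lambda$ supported on $\Pi_\lambda$, the conjugate $h f h^{-1}$ is supported on $\Pi_{h\cdot\lambda}$ and equals the "same" homeomorphism transported by $h$; identifying all the $\Pi_\lambda$ via the construction's local coordinates, this says precisely $h G_\lambda h^{-1} = G_{h\cdot\lambda}$, matching the automorphism $\gamma$ defining the wreath product. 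I would verify this by choosing the local models of the pushes so that the restriction to each $\Pi_\lambda$ is literally the standard identification $\Pi_\lambda \to \Pi_{h\cdot\lambda}$.

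With these two facts in hand, the subgroup $\langle G^\Lambda, H\rangle \leq \mapS$ receives a surjection from $G \wr_\Lambda H$ sending the base factors to the $G_\lambda$ and $H$ to $H$; the relations $h f_\lambda h^{-1} = f_{h\cdot\lambda}$ just checked show this is a well-defined homomorphism. For injectivity, suppose a word $w \cdot g$ with $w \in H$ and $g = (g_\lambda)_\lambda \in G^\Lambda$ acts trivially on $S$. If $w \neq 1$, it moves some vertex surface a nonzero distance in the relevant graph (this is exactly the mechanism used in the proof of \Cref{thm:freegroup} and in \Cref{ex:freegroup}: a nontrivial push/multipush word fails to fix some $\Pi_\lambda$, or more carefully, acts nontrivially by the covering-space argument there), contradicting triviality since $g$ is supported on the $\Pi_\lambda$'s and cannot cancel the global displacement. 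Hence $w = 1$, and then $g$ acts trivially, forcing each $g_\lambda = 1$ by $\pi_1$-injectivity of $\Pi_\lambda$. Therefore the homomorphism is an isomorphism onto its image and $G \wr_\Lambda H \leq \mapS$.

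The main obstacle I anticipate is making precise the claim that conjugation by a push \emph{exactly} realizes the coordinate-permuting automorphism $\gamma$ rather than some twisted version of it — i.e., ensuring the pushes are set up so their restriction to each attached copy of $\Pi$ is the canonical identification and not composed with some nontrivial mapping class of $\Pi$. This is a matter of carefully choosing the local models in \Cref{D:shift} and the finite-shift definition (the "tapers to the identity" clause, applied so that the pushes are actually the identity near the $\Pi_\lambda$ and shift only on the strip), and then observing that the induced identifications $\Pi_\lambda \to \Pi_{h\cdot\lambda}$ compose correctly to give an honest $H$-action on the fixed copy $\map(\Pi)$. Everything else — disjoint supports commuting, $\pi_1$-injectivity of essential subsurfaces, and detecting nontrivial $H$-words — is either routine or already established in the preceding sections.
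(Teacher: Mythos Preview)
Your approach is essentially the same as the paper's: define $G_\lambda$ as the copy of $G$ supported on $\Pi_\lambda$, observe that disjoint supports give $\langle G_\lambda\mid\lambda\in\Lambda\rangle = G^\Lambda$, check the conjugation relation $hG_\lambda h^{-1} = G_{h\cdot\lambda}$, and verify that $H$ meets $G^\Lambda$ trivially. The paper's proof is terser and simply asserts these facts; you supply more justification, and in particular your worry about whether conjugation by a push realizes \emph{exactly} the automorphism $\gamma$ (rather than a twisted version) is a legitimate detail the paper glosses over.

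Your injectivity argument has one misdirection, however. You invoke \Cref{thm:freegroup} to claim that a nontrivial $w\in H$ must move some $\Pi_\lambda$, but that theorem concerns Schreier surfaces and its hypotheses need not hold in the generality of this proposition; indeed \Cref{sec:notfree} exhibits a group $H$ generated by shifts containing nontrivial elements that fix every $\Pi_\lambda$ setwise. The correct argument---which the paper also leaves implicit---is purely a support argument and does not need $H$ to be free or to displace any $\Pi_\lambda$. If $w\cdot g=\id$ with $g\in G^\Lambda$, then since $g$ is supported on $\bigcup_\lambda \Pi_\lambda$, the element $w$ must be the identity on $S\setminus\bigcup_\lambda\Pi_\lambda$. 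If $w$ permuted the $\Pi_\lambda$ nontrivially then so would $w\cdot g$, so $w$ fixes each $\Pi_\lambda$ setwise; by your own observation that pushes restrict to the canonical identification on each $\Pi_\lambda$, this forces $w|_{\Pi_\lambda}=\id$ as well. Hence $w=\id$, and then $g=\id$ by the $\pi_1$--injectivity you already noted.
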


\begin{proof}
    Let $h_1,\dots, h_n$ be the generators of $H$, so $\Lambda$ is a set indexing the copies of $\Pi$ contained in the union of the domains of the $h_i$.  Each $h_i$ permutes the copies of $\Pi$ in its domain and so acts on $\Lambda$: if $\lambda\in \Lambda$, then $h_i(\lambda)$ is defined to be the index of $h_i\left(\Pi_\lambda\right)$.  This induces an action of $H$ on $\Lambda$. 
    
     Let $G\leq \map(\Pi)$, and let $G_\lambda\cong G$ be the corresponding subgroup of $\mapS$ supported on $\Pi_\lambda$.  Whenever $\lambda\neq\lambda'$, the subgroups $G_\lambda$ and $G_{\lambda'}$ have disjoint supports and commute, so $\<G_{\lambda}\mid \lambda \in \Lambda \>=G^{\Lambda}$.  For any $h\in H$ and $\lambda\in \Lambda$, we have
    $ hG_{\lambda} h^{-1} = G_{h(\lambda)}$  and $ H\cap  G_{\lambda}=\{1\}$.  Therefore, the subgroup of $\mapS$ generated by $\langle H, G_{\lambda}\mid \lambda \in \Lambda\rangle$ is isomorphic to $G\wr_{\Lambda} H$.
\end{proof}

We illustrate this proposition with several examples.

\begin{example} \label{ex:GwrHgeneral} 
\Cref{prop:GwrHgeneral} applies whenever $S$ and $H$ are one of the following.
    \begin{enumerate}
        \item Let $S$ be a surface with an embedded copy of $D_\Pi$, and let $H$ be generated by a (possibly one-ended) shift $h$, so that $H\cong \Z$.    The index set $\Lambda$ is simply $\mathbb Z$, and $h$ acts on $\Lambda$ as addition by 1.
        \item Let $S$ be a Schreier surface for a triple $(A,T,B)$ such that $t_1,\dots, t_n\in T$ correspond to biinfinite geodesics in $\Gamma(A,T,B)$. Let $H$ be the subgroup of $\mapS$ generated by the multipushes $x_{t_1},\dots, x_{t_n}$.  By \Cref{thm:freegroup}, $H\cong \mathbb F_n$.  In this case, the index set $\Lambda$ is the collection of right cosets $\{Ba\mid a\in A\}$.  Each generator $x_{t_i}$ acts on $\Lambda$ as follows: if $Ba\in \Lambda$, then $x_{t_i}\cdot Ba = Bat_i$.
        \item Let $S=S_\Gamma(\Pi)$ be the surface described in \Cref{sec:notfree}, and let $H=\langle h_a,h_b\rangle$ be the subgroup of $\map(S)$ constructed in that section.  In this case, $H$ is not free.  The index set $\Lambda$ is the set $\{(0,n),(n,0)\mid n\in\mathbb Z\}$, and the generators $h_a$ and $h_b$ act on $\Lambda$ as addition by $(1,0)$ and $(0,1)$, respectively. 
    \end{enumerate}

\end{example}

When $S\in\familya$, it follows from \Cref{lem:nonconjugate} that there are countably many non-conjugate embeddings of $G \wr H$ in $\mapS$ for $G,H$ as in the statements of \Cref{prop:GwrHgeneral}.  Moreover, none of these embeddings lie in the isometry group for any hyperbolic metric on $S$  by the discussion in \Cref{sec:non-isometric} or in  $\overline{\mapcS}$ by \Cref{rem:infinitetype}.

\subsection{Solvable Baumslag-Solitar groups} \label{sec:BS1n}

For our third and final construction in this section, we focus on solvable Baumslag-Solitar groups. Fixing a positive integer $n$, recall that the Baumslag-Solitar group $BS(1,n)$ is the group with presentation 
\[
BS(1,n)=\langle a,t \mid tat^{-1}a^{-n} \rangle.
\]

Given $n\in \mathbb N$, we first construct a shift map with $n^k$th roots for all $k\in \mathbb N$ on a certain class of surfaces.  To do this, we borrow ideas from asymptotically rigid mapping class groups \cite{AF21}.  We say that a mapping class group element is \emph{rigid} with respect to some pants decomposition if the pants curves, seams, and fronts (as defined in \Cref{defn:seams}) are preserved setwise by the element.

\begin{lemma}\label{lem:roots}
Let $S$ be a surface whose end space contains a clopen subset homeomorphic to a Cantor set of planar ends. For any $n\in \N$, there is a shift map in $\mapS$ with $n^k$th roots for all $k\in \N$.
\end{lemma}

\begin{proof}

If the end space of $S$ has a clopen subset homeomorphic to a Cantor set of planar ends, then there exists a subsurface $S'\subset S$ which is homeomorphic to a Cantor tree surface with a boundary component (see \cite[Lemma 2.7]{AMP21} for a proof). That is, $S'$ has exactly one compact boundary component, genus zero, and $E(S')$ is a Cantor set.  We will construct an explicit realization of $S'$ and a shift map that has roots. Then, because homeomorphisms induce isomorphisms of the respective mapping class groups, we see that any surface with a subsurface homeomorphic to $S'$ also has a shift map with the desired property, proving the theorem in its full generality.

We start with a pair of pants with countably many cuffs, called $B$, which is constructed from countably many copies of a standard pair of pants (3-hold spheres) in the following way. Let $P$ be a standard pair of pants with boundary components labeled by $\alpha$, $\beta_r$, $\beta_\ell$ and equipped with seams so that $P$ has a back side and a front side. Then $\beta_r$ and $\beta_\ell$ are the right and left cuff of $P$, respectively. The surface $B$ is constructed from countably many copies of $P$, labeled by $\{P_i\}_{i\in\Z}$, with the right cuff of $P_i$ glued to the left cuff of $P_{i+1}$ for all $i \in \Z$, such that the seams and fronts of the $P_i$ are compatible. Here compatibility is the property described after \Cref{defn:seams}, i.e.,  the union of the seams separates $B$ into two disjoint connected components, the front and the back, containing the front and, respectively, the back of each $P_i$. The boundary component of $B$ corresponding to the cuff $\alpha$ of $P_i$ will be labeled by $\alpha_i$.

Next, let $T$ be a Cantor tree surface with boundary that is built from $(n+1)$-holed spheres, called pants, equipped with compatible seams and fronts.
Glue a copy of $T$, called $T_0$, to $\alpha_0$ in $B$ so that the seams and front line up with those of $B$. We will use finite strings in $\{1,\dots, n\}$ to label the curves in the pants decomposition of $T_0$ as follows. Start with the pants containing the boundary component of $T_0$ and, moving clockwise along the boundary of the polygon that forms the front of the pants, label the other boundary curves $\alpha_{0,1}, \dots \alpha_{0,n}$. Continue on adjacent pants, using the cyclic order to append the corresponding digit in $\{1,\dots, n\}$. See \Cref{fig:PantsDecomp} below for the case $n=3$.
Finally, glue a copy of $T$, called $T_i$, to each other curve $\alpha_i$ of $B$, so that the seams and fronts line up. Label the pants curves of $T_i$ by $\alpha_{i,x}$ with $x\in \{1,\dots,n\}^{<\N}$ analogous to the labeling of pants curves of $T_0$.

\begin{figure}[h]
        \centering
        \begin{overpic}[width=0.6\linewidth]{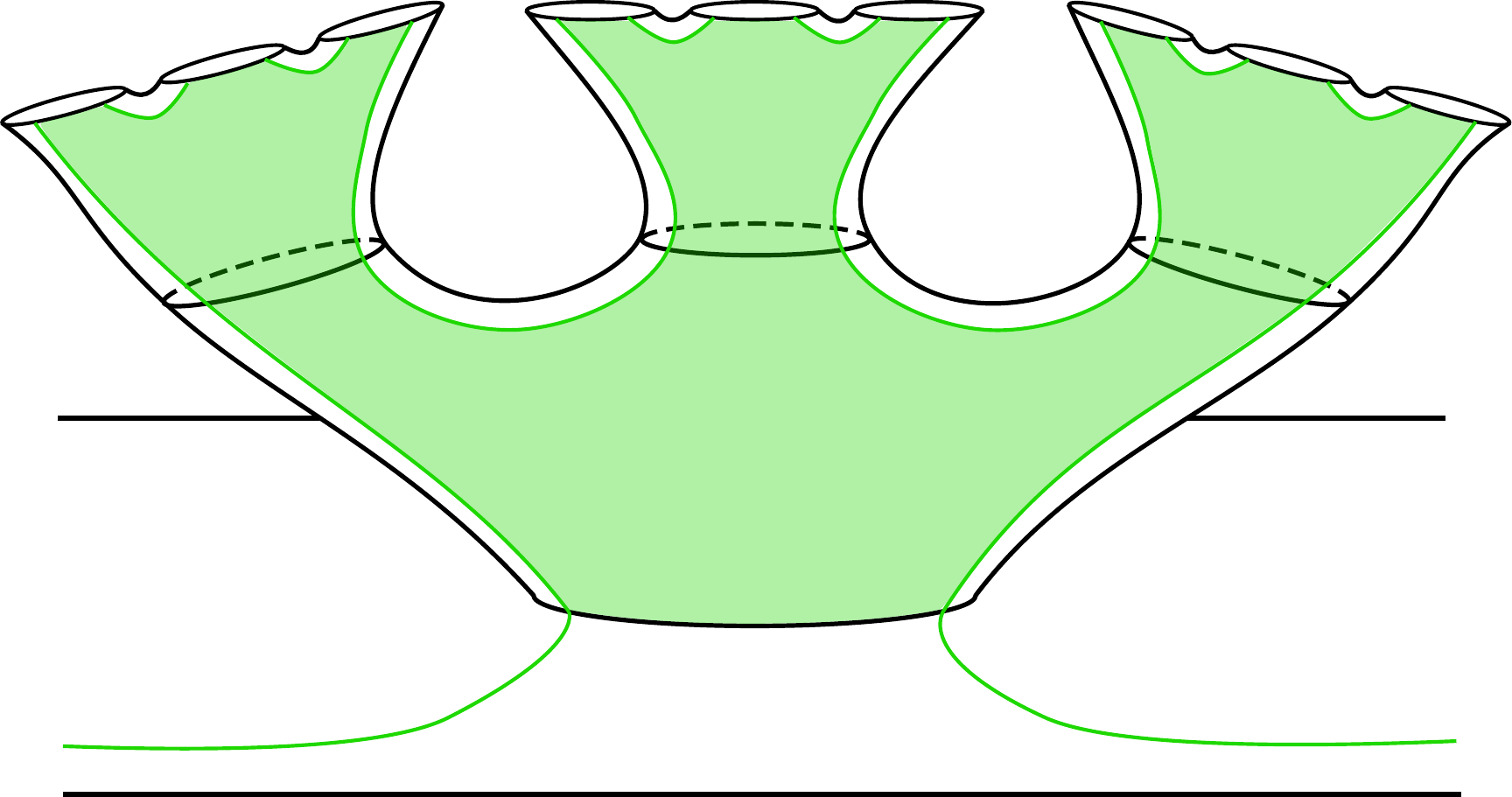}
            \put(49,8){$\alpha_0$}
            \put(18,31){$\alpha_{0,1}$}
            \put(49,32){$\alpha_{0,2}$}
            \put(77,31){$\alpha_{0,3}$ }
            \put(1,48){\tiny{$\alpha_{0,11}$}}
            \put(11,50.5){\tiny{$\alpha_{0,12}$}}
            \put(22,53){\tiny{$\alpha_{0,13}$}}
            \put(36,54){\tiny{$\alpha_{0,21}$}}
            \put(46,54){\tiny{$\alpha_{0,22}$}}
            \put(58,54){\tiny{$\alpha_{0,23}$}}
            \put(72,53){\tiny{$\alpha_{0,31}$}}
            \put(83,50.5){\tiny{$\alpha_{0,32}$}}
            \put(93,47.5){\tiny{$\alpha_{0,33}$}}
            \put(46,20){front}
        \end{overpic}
        \caption{The labeling of curves in the pants decomposition for $n=3$ on $T_0$.}
        \label{fig:PantsDecomp}
\end{figure} 

The resulting surface $R$ is homeomorphic to a Cantor tree surface since it has no boundary, genus zero, and its endspace is homeomorphic to a Cantor set of planar ends.

There is a rigid shift $\phi\in\map(R)$ with shift domain a properly embedded copy of $D_T$, which shifts $T_i$ to $T_{i+1}$, and which respects the seams and front of $R$.
By construction, $\phi$ has an $n$th root which shifts $\alpha_{i,j}$ to $\alpha_{i,j+1}$, when $1\leq j<n$ and $\alpha_{i,n}$ to $\alpha_{i+1,1}$. See \Cref{fig:shiftroots} for an example in the case where $n=3$.
    \begin{figure}[h]
        \centering
        \begin{overpic}[width=0.8\linewidth]{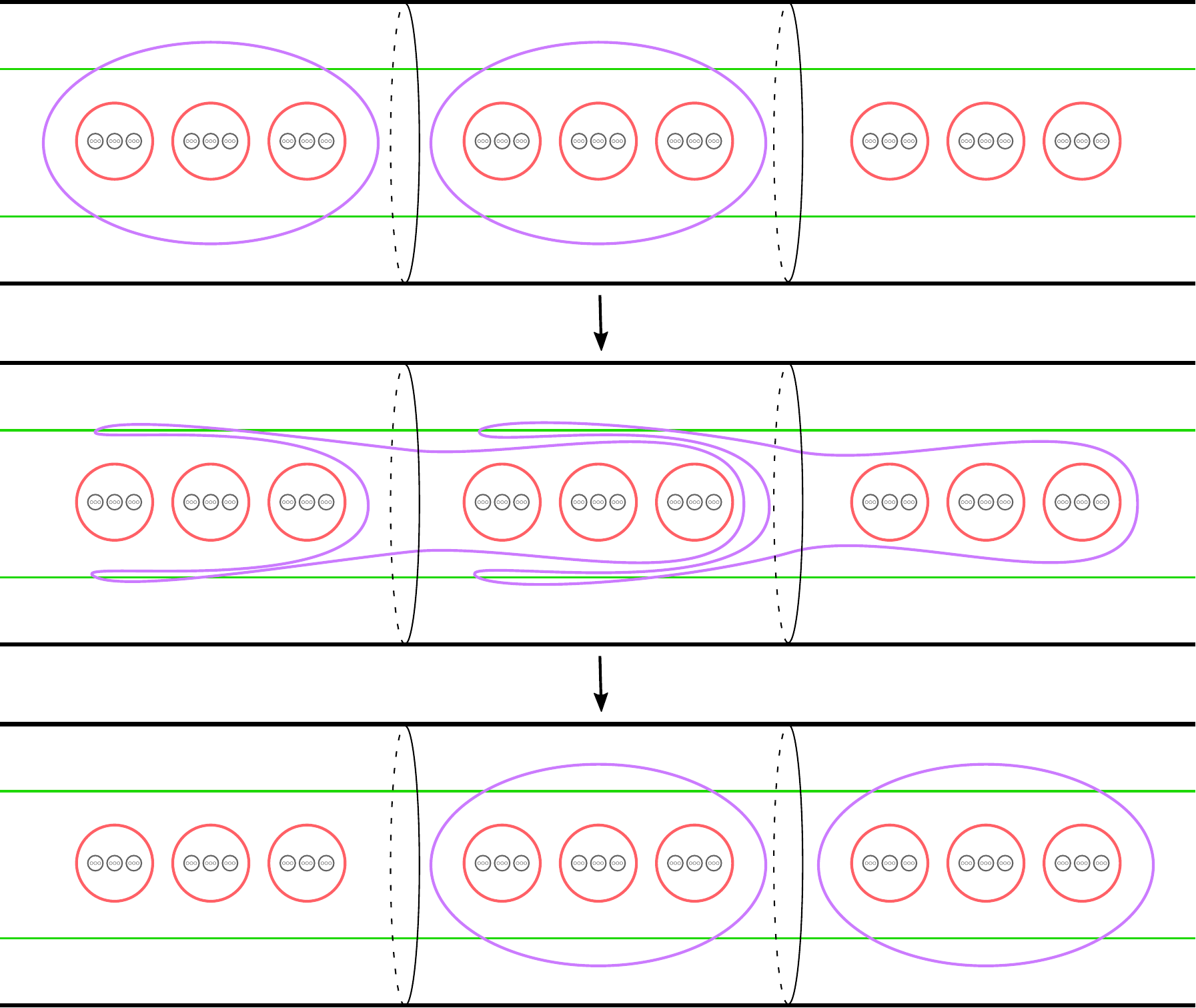}
        \put(17.5, 62){\textcolor{violet}{$\alpha_0$}}
           \put(50, 62){\textcolor{violet}{$\alpha_1$}}
              \put(20, 33){\textcolor{violet}{$f^3(\alpha_0)$}}
                 \put(55, 33){\textcolor{violet}{$f^3(\alpha_1)$}}
                \put(7.5,76.5){\textcolor{purple}{\tiny{$\alpha_{0,1}$}}}
                 \put(16,76.5){\textcolor{purple}{\tiny{$\alpha_{0,2}$}}}
                \put(23,76.5){\textcolor{purple}{\tiny{$\alpha_{0,3}$}}}
                   \put(41,76.5){\textcolor{purple}{\tiny{$\alpha_{1,1}$}}}
                   \put(35.5,45){\textcolor{purple}{\scalebox{.4}{$f^3(\alpha_{0,1})$}}}
                    \put(47,46){\textcolor{purple}{\scalebox{.4}{$f^3(\alpha_{0,2})$}}}
                    \put(54,46){\textcolor{purple}{\scalebox{.4}{$f^3(\alpha_{0,3})$}}}
                 \put(49,76.5){\textcolor{purple}{\tiny{$\alpha_{1,2}$}}}
                \put(57,76.5){\textcolor{purple}{\tiny{$\alpha_{1,3}$}}}
                 \put(52,56){$f^3$}
                 \put(52,26){isotopy}
                       \put(41, 1.25){\footnotesize{\textcolor{violet}{$f^3(\alpha_0) = \phi(\alpha_0)$}}}
                 \put(76, 1.25){\footnotesize{\textcolor{violet}{$f^3(\alpha_1) = \phi(\alpha_1)$}}}
                 \put(12,68){\textcolor{purple}{$\rcurvearrowright$}}
                 \put(12.5, 66){\textcolor{purple}{\tiny $f$}}
                  \put(21,68){\textcolor{purple}{$\rcurvearrowright$}}
                 \put(21.5, 66){\textcolor{purple}{\tiny $f$}}
                 \put(31.5,67){\textcolor{purple}{\LARGE{$\rcurvearrowright$}}}
                 \put(32.5, 65){\textcolor{purple}{\scriptsize $f$}}
                 \put(45,68){\textcolor{purple}{$\rcurvearrowright$}}
                 \put(45.5, 66){\textcolor{purple}{\tiny $f$}}
                  \put(53,68){\textcolor{purple}{$\rcurvearrowright$}}
                 \put(53.5, 66){\textcolor{purple}{\tiny $f$}}
        \end{overpic}
        \caption{A flattened picture of the surface $R$ for $n=3$. Let $f$ be the map that shifts $\alpha_{i,j}$ to $\alpha_{i,j+1}$, when $1\leq j<3$ and $\alpha_{i,3}$ to $\alpha_{i+1,1}$. The domain of $f$ is shown in green. We see that $f^3$ is indeed equal to the rigid shift $\phi$ in $\map(R)$ that takes the $T_i$ to $T_{i+1}$.}
        \label{fig:shiftroots}
\end{figure} 

Similarly, the $n^k$th root shifts the curves at height $k$ according to the lexicographic ordering on $\N\times \{1,\dots,n\}^{k}$. Note that $\phi$ being rigid is part of what ensures these roots of $\phi$ exist. Finally, note that the domain of $\phi$ is not all of $R$ so that we may remove an open disk, say on the back of $R$, to obtain a surface homeomorphic to $S'$. The corresponding mapping class of $S'$, which we also call $\phi$ by an abuse of notation, is a shift map with $n^k$th roots for all $k \in \N$.
\end{proof}

\begin{theorem}
    \label{thm:BS1n}
 Let $S$ be a surface whose end space contains a clopen subset homeomorphic to
a Cantor set of planar ends. Then $BS(1,n)\leq \mapS$ for all $n >0$.
\end{theorem}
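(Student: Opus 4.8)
The plan is to realize $BS(1,n)$ inside $\mapS$ using the interplay between a shift map and a group generated by mapping classes supported on the copies of $\Pi$ that the shift permutes. The presentation $\langle a, t \mid tat^{-1}a^{-n}\rangle$ suggests which ingredients to build: $t$ will be (a variant of) a shift map on $S$, while $a$ will be built out of Dehn twists (or other supported mapping classes) on the copies of $\Pi$ in the shift domain, arranged so that conjugating $a$ by $t$ yields the $n$-th power $a^n$. The key structural fact is that the hypothesis on $E(S)$—containing a self-similar subset with a Cantor set of maximal ends—guarantees, by \cite{FPR} (see also \cite{MannRafi}), that $S$ admits a shift map whose domain $D_\Pi$ can be taken with its copies of $\Pi$ indexed in a self-similar way. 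Concretely, one wants the ``ends'' of the shift strip to both limit onto the self-similar Cantor set, and one wants to choose the indexing of the copies of $\Pi$ by a set carrying both the shift action and an $n$-to-$1$ ``absorption'' structure.

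The main construction I would carry out is as follows. First, fix the self-similar Cantor set $C \subseteq E(S)$ of maximal ends and use it to embed a domain $D_\Pi$ (with $\Pi$ a once-punctured disk, say, since $BS(1,n)$ will not depend on the choice of $\Pi$ by the same reasoning as in \Cref{sec:notfree}) so that the two ends of the bi-infinite strip both accumulate onto $C$; this produces a one-ended shift $h$ on $S$ in the sense of \Cref{D:shift}. Second, I index the copies of $\Pi$ in $D_h$ not by $\Z$ but by $\Z[1/n]$ (the group of $n$-adic rationals), arranging the embedding so that multiplication by $n$ on $\Z[1/n]$ is realized by $h$ — i.e. $h$ carries $\Pi_{r}$ to $\Pi_{nr}$ — which is possible precisely because the self-similar structure of $C$ lets us ``refine'' the strip $n$-fold at each step. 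Third, let $a$ be the mapping class acting as a fixed Dehn twist on $\Pi_m$ for each $m \in \Z$ (the integer-indexed copies), and trivially on all other copies $\Pi_r$ with $r \notin \Z$. Since the $\Pi_r$ have pairwise disjoint supports, $a$ is a well-defined element of infinite order, commuting with all its $h$-translates. Then $h a h^{-1}$ is supported on $\Pi_{nm}$ for $m \in \Z$, which is the twist on the sub-collection indexed by $n\Z \subseteq \Z$; meanwhile $a^n$ is the $n$-th power of the twist on \emph{all} of $\Z$. These are not literally equal, so the indexing needs to be chosen more carefully — the correct setup is to have $a$ be a twist supported near the boundary of a subsurface whose $h$-orbit absorbs $n$ copies at a time, exactly as the Baumslag–Solitar relation $t a t^{-1} = a^n$ dictates. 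Fourth, verify the relation $h a h^{-1} = a^n$ holds in $\mapS$ and that no unexpected relations hold, so that $\langle a, h\rangle \cong BS(1,n)$.

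The last step is where the real work lies, and I expect it to be the main obstacle: checking that $\langle a, h\rangle$ has \emph{exactly} the relations of $BS(1,n)$ and no more. For the ``no extra relations'' direction I would exhibit a homomorphism $BS(1,n) \to \mapS$ sending the generators to $a$ and $h$, and then argue injectivity by a combinatorial/dynamical ping-pong-type argument on the space of ends or on a family of disjoint curves: every element of $BS(1,n)$ has a normal form $t^{-i} a^k t^j$ (with $k$ not divisible by $n$ when $i,j>0$, roughly), and I would track the support — which copies of $\Pi$, equivalently which $n$-adic indices, are moved or twisted — to show that any word in normal form that is nontrivial in $BS(1,n)$ acts nontrivially on $S$. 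This is morally parallel to the support-tracking arguments used in \Cref{ex:freegroup} and the proof of \Cref{thm:freegroup}, but the presence of the relation $t a t^{-1} = a^n$ means I cannot simply read off the action from a tree; instead I rely on the $\Z[1/n]$-indexing so that the ``$a$-part'' of a normal form is detected by a twist about a curve surrounding a specific finite cluster of copies of $\Pi$, and the ``$t$-part'' by which cluster it is. The hypothesis on $E(S)$ is exactly what is needed to make room for this $n$-adic refinement while keeping everything properly embedded in the fixed surface $S$. Once injectivity is in hand, the theorem follows, and (as in the other constructions) one could additionally note that $h \notin \overline{\mapcS}$ by \Cref{rem:infinitetype} so the copy of $BS(1,n)$ is of intrinsically infinite type, though that is not part of the stated conclusion.
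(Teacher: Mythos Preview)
Your proposal has a genuine gap at the heart of the construction. You correctly identify the problem yourself: if $a$ is the same Dehn twist on each integer-indexed $\Pi_m$ and $h$ sends $\Pi_r$ to $\Pi_{nr}$, then $hah^{-1}$ is a \emph{single} twist on the $n\Z$-indexed copies, whereas $a^n$ is an \emph{$n$-fold} twist on all $\Z$-indexed copies. These are not isotopic, and no amount of re-indexing fixes this, because conjugation by a shift only transports supports; it never multiplies a twist. Your fallback suggestion --- ``a twist supported near the boundary of a subsurface whose $h$-orbit absorbs $n$ copies at a time'' --- has the same defect: conjugating such a twist by $h$ produces one twist about a bigger subsurface, not $n$ twists about the original one. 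A related issue is your choice of $\Pi$ as a once-punctured disk: this leaves no internal room in each $\Pi_k$ for the structure you actually need.

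The missing idea, which the paper supplies, is that $a$ must act \emph{differently} on each $\Pi_k$, and not by twists at all. The paper takes $\Pi$ itself to contain a Cantor set of maximal ends (this is what the hypothesis on $E(S)$ buys, via \cite{FPR}), and for each $k\in\Z$ fixes a system of curves in $\Pi$ that partitions that Cantor set at ``resolution $n^{-k}$''. The element $a$ is then defined so that on the $k$-th copy $\Pi_k$ it acts as an internal shift $\phi_k$ by one step at resolution $n^{-k}$. The point is that $\phi_{k-1}$ (one step at the coarser resolution) equals $\phi_k^n$ (\,$n$ steps at the finer resolution), so when the outer shift $h$ moves $\Pi_k$ to $\Pi_{k-1}$ and back, the conjugate $hah^{-1}$ acts on $\Pi_k$ as $\phi_{k-1}=\phi_k^n$, yielding the relation $hah^{-1}=a^n$ on the nose. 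Injectivity then follows easily from the normal form $t^i a^k t^{-j}$: matching the outer shift forces $i=j$, and then restricting to $\Pi_0$ shows $\phi_{-j}^k$ must be trivial, hence $k=0$. Your normal-form strategy for injectivity is fine; what you are missing is a construction of $a$ for which the relation actually holds.
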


\begin{proof}
    Let $S'\subset S$ be homeomorphic to the Cantor tree surface with a boundary component.
    Let $\push\in \map(S')$ be a rigid shift map with domain $D_{T}$, where $T$ is also homeomorphic to a Cantor tree surface with a boundary component. Index the copies of $T$ in $D_{T}$ by $\Z$ according to the shift order. 
    
    Now, since $T_0$ is homeomorphic to a Cantor tree surface with one boundary component, we may choose a rigid shift $\phi$ on $T_0$ with $n^k$th roots by \Cref{lem:roots}. For $k\geq0$ let $\phi_k=\sqrt[n^k]{\phi}$, and for $k<0$ let $\phi_k=\phi^{n^{-k}}$. Note that for all $k \in \Z$, we have $\phi_k^n=\phi_{k-1}$.

    Next, let $\tilde{\phi_k}=h^k\phi_k h^{-k}$. Heuristically, $\tilde{\phi_k}$ performs $\phi_k$ on $T_k$ instead of on $T_0$. Because the domains of $\tilde{\phi_k}$ are disjoint, the product \[\Phi=\prod_{k\in Z}\tilde{\phi_k}\] is a well-defined element of $\mapS$. This map simultaneously performs $\phi_k$ on $T_k$ for all $k \in \Z$. See \Cref{fig:ModifiedPantsDecomp} for an example of the action of $\Phi$ on $S$ when $n=2$. 

    \begin{figure}[h]
        \centering
        \begin{overpic}[width=0.9\linewidth]{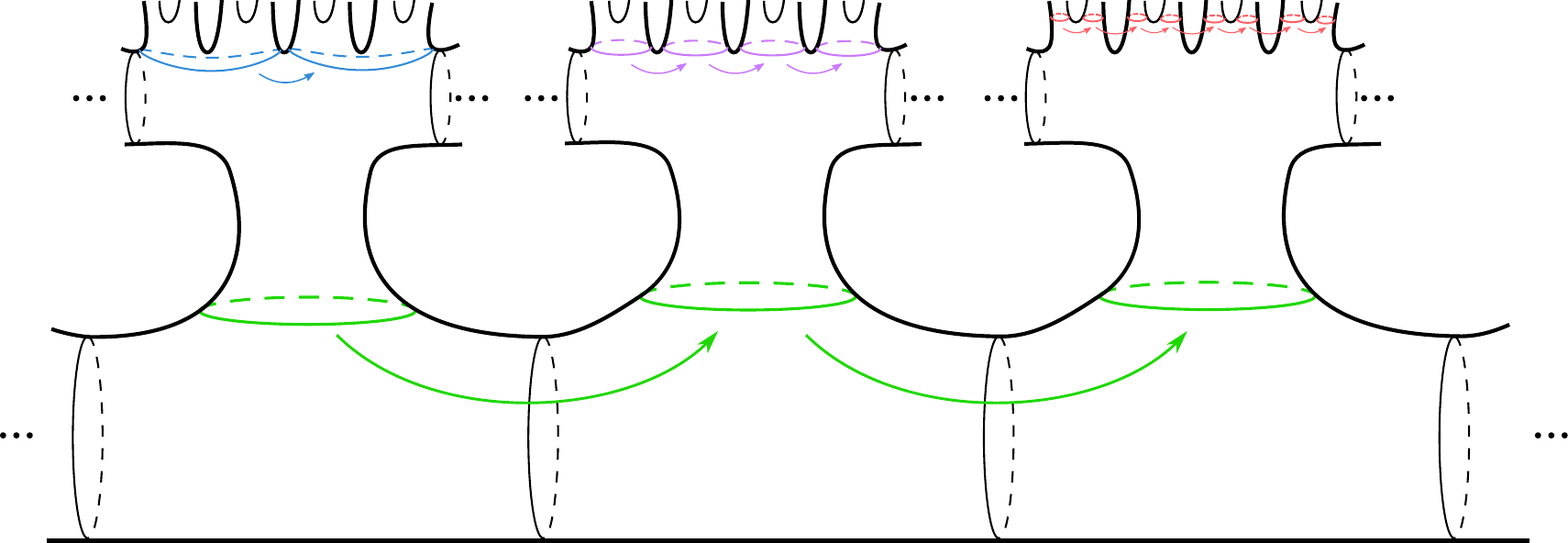}
        \put(17.5,26.5){\textcolor{cyan}{$\phi^2$}}
          \put(46,27.5){\textcolor{violet}{$\phi$}}
          \put(72,29.5){\textcolor{purple}{\footnotesize{$\phi^{\frac{1}{2}}$}}}
            \put(31,6){\textcolor{green}{$h$}}
            \put(60,6){\textcolor{green}{$h$}}
               \put(18,18){$T_{-1}$}
             \put(47,18){$T_{0}$}
              \put(77,18){$T_{1}$}
        \end{overpic}
        \caption{This figure shows the action of the shift $h$ on $S$ as well as the action of $\Phi$ on the subsurfaces $T_{-1}, T_0, T_1$ when $n=2$.}
        \label{fig:ModifiedPantsDecomp}
\end{figure}

    Let  $f\colon BS(1,n)\to \map(S)$ be the map defined by $f(a)=\Phi$ and $f(t)=\push$.  We will show that  $f$ is an isomorphism onto its image, i.e., $\mapS$ contains a subgroup isomorphic to $BS(1,n)$. 

    For each $k\in \Z$, the mapping class $f(tat^{-1})=\push \Phi \push^{-1}$ first shifts $T_k$ to $T_{k-1}$, applies $\Phi$, which now acts as $\phi_{k-1}=\phi_k^n$ on $T_{k-1}$, and then shifts $T_{k-1}$ back to $T_k$. It follows that 
    \[
        f(tat^{-1})=\push \Phi \push^{-1}=\Phi^n=f(a^n).
    \]  
    Therefore, $f$ is a well-defined homomorphism.  This can also be verified algebraically:

\begin{align*}
    \Phi^n&=\prod_{k\in Z}\tilde{\phi_k}^n=\prod_{k\in Z}(h^k\phi_k h^{-k})^n=\prod_{k\in Z}h^k\phi_k^n h^{-k} = \prod_{k\in Z}h^k\phi_{k-1} h^{-k}\\
    &=h \left( \prod_{k\in Z}h^{k-1}\phi_{k-1} h^{-(k-1)}\right) h\inv= h \Phi h\inv.
\end{align*}

    Suppose there exists $g\in BS(1,n)$ such that $f(g)$ is the identity of $\map(S)$.   Using the relation in $BS(1,n)$, the element $g$ can be written as $g=t^ia^kt^{-j}$ for some $k\in \Z$ and $i,j\in \Z_{\geq 0}$.  Since $f(g)=\push^i\Phi^k\push^{-j}$ is the identity, it must fix each $T_i$, and so we must have $i=j$.  Consider the surface $T_0$.  Then, $f(g)$ first shifts $T_0$ to the left $j$ times, applies $\Phi^k$ (which acts as $\phi_{-j}^k$), and then shifts back to $T_0$.  The result is that $f(g)$ acts by $\phi_{-j}^k$ on $T_0$. The only way that $f(g)$ can act as the identity on $T_0$ is if $k=0$.  Thus, $g=t^ja^0t^{-j}=1$, and $f$ is injective, as desired.
\end{proof}

The construction above embeds solvable Baumslag-Solitar groups into mapping class groups of certain 
\emph{infinite-type} surfaces.  This is in contrast to the finite-type case, where $BS(1,n)$ is never a subgroup of the mapping class group due to the Tits alternative: every subgroup of such a mapping class group either contains a free subgroup or is virtually abelian  \cite{Ivanov, McCarthy}.  Since $BS(1,n)$ is solvable, it does not contain any free subgroups, but it is also not virtually abelian.

We expect that the techniques in \Cref{sec:non-conjugate} can be used to produce countably many non-conjugate embeddings of $BS(1,n)$ into $\mapS$ if the topology of the Cantor trees $T_i$ are suitably edited. We also expect that a construction similar to the one above can be used to embed $BS(m,n)$ into $\mapS$ when $m\neq 1$.  However, the lack of a normal form for elements in $BS(m,n)$ significantly increases the complexity of the proof.

\section{Indicable groups}\label{sec:indicable}

In this section, we give a general construction for embedding any indicable group which arises as a subgroup of a mapping class group of a surface with one boundary component into a big mapping class group in countably many non-conjugate and intrinsically infinite-type ways. 
We will need the following lemma in our construction.

\begin{lemma}\label{L:indicable}
A group $G$ is indicable if and only if there exists a presentation $G = \langle T \mid R \rangle$ such that for each $r \in R$, the total exponent sum of $r$ with respect to the generators $T$ is zero. 
\end{lemma}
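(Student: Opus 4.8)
The plan is to prove both directions directly from the definitions. For the easier direction, suppose $G = \langle T \mid R \rangle$ is a presentation in which every relator $r \in R$ has total exponent sum zero in the generators $T$. I would define a map $\phi$ on the free group $F(T)$ by sending each generator $t \in T$ to $1 \in \Z$; equivalently, $\phi$ records the total exponent sum of a word. Since each relator has exponent sum zero, $\phi$ kills the normal closure of $R$, hence descends to a homomorphism $\bar\phi \colon G \to \Z$. This map is surjective as long as $T$ is nonempty (if $T$ were empty, $G$ would be trivial and not indicable, so we may assume $T \neq \varnothing$), since any single generator maps to $1$. Thus $G$ is indicable.

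For the converse, suppose $G$ is indicable, witnessed by a surjection $f \colon G \to \Z$. Choose any generating set $S_0$ for $G$ (for instance all of $G$, or a given one). I would then doctor this generating set so that the induced presentation has the exponent-sum-zero property. The key trick: pick $g_0 \in G$ with $f(g_0) = 1$, and for each generator $s \in S_0$ replace $s$ by $s' = s g_0^{-f(s)}$, which satisfies $f(s') = 0$. The set $T = \{s' : s \in S_0\} \cup \{g_0\}$ still generates $G$. Now take the presentation $G = \langle T \mid R \rangle$ where $R$ is the full set of relations (the kernel of $F(T) \twoheadrightarrow G$). I claim every $r \in R$ has total exponent sum zero: define $\psi \colon F(T) \to \Z$ by $\psi(s') = 0$ for each $s'$ and $\psi(g_0) = 1$. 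Since $f(s') = 0 = \psi(s')$ and $f(g_0) = 1 = \psi(g_0)$, the homomorphism $\psi$ factors through $f$ composed with the quotient map, so $\psi$ vanishes on $R$. But $\psi(r)$ is exactly the total exponent sum of $r$ with respect to $T$ (note the generator $g_0$ contributes, so "total exponent sum" here must be interpreted as the image under the abelianization-then-augmentation that sends every generator to $1$ — wait, this needs care).

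Let me reconsider the exponent-sum bookkeeping, since this is the only real subtlety. The statement says "the total exponent sum of $r$ with respect to the generators $T$ is zero," which I read as: summing $+1$ for each occurrence of a generator and $-1$ for each occurrence of its inverse, over all generators. That is the map $\epsilon \colon F(T) \to \Z$ sending every $t \in T$ to $1$. So for the converse I actually want a generating set $T$ on which $f$ agrees with $\epsilon$. The fix is: let $T_0$ be any generating set, let $g_0$ satisfy $f(g_0)=1$, and set $T = \{ s g_0^{1 - f(s)} : s \in T_0\} \cup \{g_0\}$. Then for each new generator $t$ we have $f(t) = 1$, and $f(g_0) = 1$, so $f = \epsilon$ as maps $F(T) \to \Z$ after composing $f$ with the quotient $F(T) \to G$. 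Hence every relator $r$ (element of the kernel) satisfies $\epsilon(r) = f(\text{image of } r) = f(1) = 0$, as desired. For the forward direction I should similarly note $\epsilon$ is the map sending all generators to $1$, and exponent-sum-zero relators are exactly those in $\ker \epsilon$, so $\epsilon$ descends to $G \to \Z$, surjective since some $t \in T$ maps to $1$ (and $T \neq \varnothing$ as $G$ is indicable hence infinite).

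The main obstacle is purely the correct interpretation and handling of "total exponent sum with respect to $T$" — whether it means the sum over all generators (the augmentation-type map sending every $t$ to $1$) or something coordinatewise. Everything hinges on arranging a generating set on which the surjection to $\Z$ coincides with this augmentation map; once that is set up, both implications are immediate from the universal property of presentations. I would write the proof cleanly around the augmentation homomorphism $\epsilon \colon F(T) \to \Z$, $\epsilon(t) = 1$ for all $t \in T$, and the observation that relators of total exponent sum zero are precisely the elements of $\ker\epsilon$.
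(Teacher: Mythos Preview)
Your proposal is correct and follows essentially the same approach as the paper: for the easy direction you both send every generator to $1\in\Z$, and for the converse you both arrange a generating set on which $f$ takes the constant value $1$ so that the augmentation map and $f$ agree. The only cosmetic difference is that the paper starts from a generating set $V$ of $\ker(f)$ together with an element $a$ with $f(a)=1$ and replaces each $v\in V$ by $av$, whereas you start from an arbitrary generating set $S_0$ and replace each $s$ by $sg_0^{1-f(s)}$; both maneuvers achieve the same thing.
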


Before presenting the proof of the lemma, we give an example that motivates the argument. Consider the Baumslag-Solitar group $BS(1,n)$ with its standard presentation $BS(1,n) = \langle a ,t  \mid t a t^{-1}a^{-n}\rangle$. This presentation does not have the desired property since the total exponent sum of the relator in the generators $a$ and $t$ is $1-n$. However, there exists a homomorphism $f \colon BS(1,n) \to \Z$ defined by letting $f(a) = 0$ and $f(t) =1$, so Lemma~\ref{L:indicable} tells us that there must be a presentation of $BS(1,n)$ with the desired property. If we augment the generator $a$ to be $at$ instead, then \[BS(1,n) = \left\langle at, t \,\middle\vert\, (t\cdot at\cdot t^{-1}\cdot t^{-1}) \cdot \underbrace{t(at)^{-1} \cdots t(at)^{-1}}_{n\text{ times}} \right\rangle,\] and the relator has zero total exponent sum in the generators $at$ and $t$. In this presentation, the generators of $BS(1,n)$ both map to $1$ under the homomorphism $f$, and we will use this property in the proof of the lemma. 

\begin{proof}[Proof of \Cref{L:indicable}]
Given a group $G = \langle T \, | \, R \rangle$ with all relators having total exponent sum zero, there is a well-defined homomorphism $f \colon G \to \Z$ defined by sending each generator to $1 \in \Z$.

For the other direction, assume there exists a homomorphism $f\colon G \to \Z$, and let $N = \ker(f)$. Let $N = \langle V \mid W \rangle$ be a presentation for $N$, and let $a \in G$ be such that $f(a) = 1$. Then since $G/N \cong \Z$, the group $G$ is generated by $T' = \{a\} \cup V$. If we augment the generators in $V\subseteq T'$ by $a$, then $T = \{a\} \cup \{av : v \in V\}$ is also a generating set for $G$. Importantly, the image of every one of these generators under $f$ is $1 \in \Z$.

Let $G = \langle T \mid R \rangle$ be the presentation of $G$ for the generating set $T$. If $r \in R$ is a relator, then $r$ is a word in $\langle T \rangle$ that is the identity in $G$. Thus, $f(r) = 0$, and given that every element of $T$ maps to $1 \in \Z$, the total exponent sum of $r$ with respect to $T$ must be zero. Therefore, $\langle T \mid R \rangle$ is one such desired presentation for $G$. 
\end{proof}

We can now begin our construction. Take any indicable group $G$ that arises as a subgroup of $\map(\Pi)$, where $\Pi$ is a surface with exactly one boundary component. Let $h$ be a shift map on an infinite-type surface $S$ whose domain is an embedded copy of $D_\Pi$ in $S$. As discussed in \Cref{sec:constructions}, this includes a wide range of surfaces, including surfaces $S_\Gamma(\Pi)$ built from any graph with countable vertex set that contains a  biinfinite path. 

The most trivial way to embed $G$ into $\mapS$ is to let $G$ act on one copy of $\Pi$ in $S$. Indexing the copies of $\Pi$ in $D_{\Pi}$ by $\Z$ and taking any subset of $I$ of $\Z$, $G$ can also act simultaneously on the subsurfaces $\Pi_i$ of $S$ for $i \in I$. Varying over all subsets of $\Z$ gives an uncountable collection of  copies of $G$ in $\mapS$. Unlike these embeddings, the construction in the next theorem produces an uncountable collection of copies of $G$ which do not lie in the isometry group of $S$, even if $G$ lies in the isometry group of $\Pi$, and do not lie in $\overline\mapcS$, even if $\Pi$ is compact. See \Cref{def:familyb} for the definition of a distinguished surface and the family $\familyb(\Pi)$.

\begin{theorem}\label{T:otherGs}
Let $\Pi$ be a distinguished surface and $G\leq \map(\Pi)$ an indicable group. 
Given a surface $S\in \familyb(\Pi)$, there are countably many non-conjugate embeddings of $G$ in $\mapS$ such that no embedded copy is contained in $\overline \mapcS$ and no embedded copy is contained in the isometry group for any hyperbolic metric on $S$. 
\end{theorem}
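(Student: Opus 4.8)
The plan is to use the indicable structure of $G$ together with the shift map $h$ to build an embedding that genuinely involves the biinfinite strip. By \Cref{L:indicable}, fix a presentation $G = \langle T \mid R \rangle$ in which every generator maps to $1 \in \Z$ under some surjection $f \colon G \to \Z$ and every relator has total exponent sum zero. Since $G \leq \map(\Pi)$, each generator $s \in T$ is realized by a homeomorphism of $\Pi$; let $\overline{s}$ denote the mapping class of $S$ that acts as $s$ simultaneously on every copy $\Pi_i$ of $\Pi$ inside $D_\Pi$ (indexed by $\Z$) and as the identity elsewhere. This is well-defined because $\Pi$ has one boundary component and the copies $\Pi_i$ are disjoint. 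Now define $\Phi \colon G \to \mapS$ on generators by $\Phi(s) = h \cdot \overline{s}$, where $h$ is the shift with domain $D_\Pi$. The point of the augmentation is that $h \cdot \overline{s}$ shifts all the $\Pi_i$ one step while also applying $s$; since every relator of $G$ has exponent sum zero, the accumulated shifts cancel along any relator, so $\Phi$ descends to a homomorphism. (This is exactly the phenomenon illustrated by the $BS(1,n)$ example before \Cref{L:indicable}.)

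The first key step is checking that $\Phi$ is a well-defined homomorphism: for a relator $r = s_1^{\epsilon_1} \cdots s_m^{\epsilon_m} \in R$ with $\sum \epsilon_j = 0$, one computes $\Phi(r)$ by tracking how it permutes the copies $\Pi_i$ and what homeomorphism of each $\Pi_i$ it induces. Because $h$ commutes past the $\overline{s_j}$ up to re-indexing (the $\overline{s_j}$ act identically on every copy), $\Phi(r)$ fixes each $\Pi_i$ setwise (exponent sum zero) and acts on $\Pi_i$ by the word $s_1^{\epsilon_1}\cdots s_m^{\epsilon_m}$ pushed through the appropriate shifts — which equals $r = 1$ in $G \leq \map(\Pi)$. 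Hence $\Phi(r) = \id$ and $\Phi$ is a homomorphism. The second key step is injectivity. Suppose $\Phi(g) = \id$. If $f(g) \neq 0$, then $\Phi(g)$ moves every $\Pi_i$ by $f(g) \neq 0$ steps along the strip, so $\Phi(g) \neq \id$. Thus $f(g) = 0$, i.e.\ $g \in \ker f$, and then $\Phi(g)$ fixes each $\Pi_i$ and restricts on, say, $\Pi_0$ to the element of $\map(\Pi)$ obtained by conjugating the given realization of $g$ by a power of the shift; this is just the image of $g$ under $G \leq \map(\Pi)$, which is nontrivial unless $g = 1$. (Here one uses that $G \hookrightarrow \map(\Pi)$ is injective.) So $\Phi$ is injective.

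The third step is producing the countably many non-conjugate embeddings and the two geometric conclusions. For each finite $m \geq 0$, build an embedding $\Phi_m$ exactly as above but using a shift $h_m$ whose domain $D_\Pi^{(m)}$ is obtained from $D_\Pi$ by omitting $m$ copies of $\Pi$ (equivalently, passing to the homeomorphic surface $S_m = S \#_{\Z} \Omega_v$ with $m$ of the $\Omega_v$ equal to $\Pi$ capped off and applying the discussion of \Cref{sec:non-conjugate}). Since $S \in \familyb(\Pi)$ and $\Pi$ is distinguished, Definition~\ref{def:familyb} guarantees that $S \setminus \supp(\Phi_m(G))$ and $S \setminus \supp(\Phi_n(G))$ have different genus or different (finite) numbers of planar/nonplanar ends when $m \neq n$, so no homeomorphism of $S$ carries one image to the other; hence $\Phi_m(G)$ and $\Phi_n(G)$ are non-conjugate, giving countably many classes. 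Every $\Phi_m(G)$ contains the shift-type element $\Phi_m(s) = h_m \overline{s}$, which does not lie in $\overline{\mapcS}$ by \Cref{rem:infinitetype} (the domain $D_\Pi$ has nontrivial end space since $\Pi$ is a distinguished — hence topologically nontrivial — surface, or if $\Pi$ is a compact positive-genus surface then $h_m$ is a power of a handle shift, still outside $\overline{\mapcS}$ by the argument in \Cref{rem:infinitetype}); so $\Phi_m(G) \not\subseteq \overline{\mapcS}$. Finally, by the discussion in \Cref{sec:non-isometric}, we may assume the complement of $D_\Pi$ in $S$ is not simply connected (choosing the $\Omega_v$ appropriately), so no shift with that domain acts as an isometry of any hyperbolic metric, and therefore $\Phi_m(G)$ is not contained in any such isometry group.

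The main obstacle I expect is the homomorphism check: one must be careful about the order of composition (the paper uses a right action and sets $x_w := x_{t_k}\cdots x_{t_1}$ for $w = t_1 \cdots t_k$) and about exactly which power of the shift conjugates the $\Pi_i$-action in the relator computation, so that the ``exponent sum zero'' hypothesis is used correctly and the cancellation of shifts is genuine rather than merely plausible. A secondary subtlety is that the $\overline{s}$ for different $s \in T$ need not commute (they act on the same copies $\Pi_i$), so one cannot separate the shift and the $\Pi$-action naively; the argument must interleave them, which is precisely where the uniform action on all copies $\Pi_i$ (independent of $i$) is essential.
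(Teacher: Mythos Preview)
Your approach is essentially the paper's, and it is correct. The one place you are making life harder than necessary is the commutation: because $\overline{s}$ acts by the \emph{same} homeomorphism on every $\Pi_i$, the elements $h$ and $\overline{s}$ commute on the nose, not merely ``up to re-indexing.'' The paper uses exactly this observation to write $\phi(w)=\bar w\, h^{k}$ in one line (where $k$ is the total exponent sum of $w$), from which both well-definedness and injectivity are immediate; this dissolves the interleaving worry in your final paragraph, since the $\overline{s_j}$ need not commute with each other---only each $\overline{s_j}$ with $h$---to pull all the shifts to one side.
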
 

\begin{proof} Let $h\in \mapS$ be a shift with domain $D_{\Pi}$, and let $G$ be an indicable group. Fix a presentation $\langle T \mid R \rangle$ of $G$ such that each $r \in R$ has total exponent sum zero with respect to $T$, which  exists by \Cref{L:indicable}. Since $G$ is a subgroup of $\map(\Pi)$, $G$ acts by homeomorphisms on each $\Pi_i$ in $D_\Pi$. For each $g \in G$, let $\bar g \in \mapS$ be the element that  acts as $g$ simultaneously on each $\Pi_i$ in $D_\Pi$. We claim that the group generated by $\overline T = \{\bar t h: t \in T\}$ in $\mapS$ is isomorphic to $G$. Let $\phi\colon F_T  \to \langle \overline T \rangle\leq \mapS$ be the surjective map defined by $t \mapsto  \bar t h$ for all $t \in T$, where $F_T$ is the free group on the generators $T$. We claim a word is in the kernel of this map if and only if it represents a trivial element in $G$.

Notice that $h$ and $\bar t$  commute as elements of $\mapS$, so  for any word $w\in F_T$ with total exponent sum $k\in \Z$, the image $\phi(w)$ can be written as  $\bar w h^k$. Thus, $\phi(w)$ acts trivially on $S$ if and only if $k=0$ and $\bar w$ acts trivially on each copy of $\Pi$ in $S$. The only elements $w$ with this property are those that are trivial in $G$, and elements that are trivial in $G$ have this property since products of conjugates of relators $r \in R$ have total exponent sum zero. Thus, the group $G'$ generated by $\overline T$ in $\mapS$ is isomorphic to $G$.

Any element of $G'$ that does not have total exponent sum zero with respect to $\overline T$ is not in $\overline \mapcS$, since it must shift the surfaces $\Pi_i$.   Remove finitely many copies of $\Pi$ from the domain of $h$ to obtain a new shift $h''$, and construct the group $G''=\langle \bar t h''\mid t\in T\rangle \leq \mapS$.  This group $G''$ is isomorphic to $G$ for the same reason that $G'\cong G$.  By the  same reasoning as in \Cref{sec:non-conjugate}, the complements of the supports of $G'$ and $G’'$ are non-homeomorphic.  In particular, $G'$ and $G’'$ are not conjugate.  As in \Cref{lem:nonconjugate}, this procedure produces countably many non-conjugate embeddings of $G$ into $\mapS$.  Finally, no such embedding is contained in $\overline{\mapcS}$ by construction. 
\end{proof}

It was suggested to the authors by Mladen Bestvina that one can get around constructing the presentation in \Cref{L:indicable} for the indicable group $G$ by working instead with the wreath product construction in \Cref{prop:GwrHgeneral}. More specifically, let $f\colon G \rightarrow \Z$ be a surjection from the indicable group to $\Z$. Let $\Pi$ be a surface with exactly one boundary component such that $G$ arises as a subgroup of $\map(\Pi)$, and let $S$ be a surface which admits a shift $h$ with domain $D_\Pi$. For $g\in G$, let $\bar g$ be the element which acts as $g$ on each $\Pi_i$. Then, for $g\in G$, define a new map $\psi\colon G\rightarrow G \wr \Z \leq \mapS$ via $g\mapsto \bar{g} h^{f(g)}$. 
One readily checks that this map is an injective homomorphism by observing that the restriction of the image of $G$ to $\bigoplus_{-\infty}^{\infty}G$ is the diagonal subgroup, and so the action of $\Z$ is trivial. The embedding in the proof of \Cref{T:otherGs} is exactly this map.

\Cref{T:otherGs} applies to all subgroups constructed in \Cref{sec:constructions}.  Another interesting class of examples produces embeddings of pure mapping class groups into a full mapping class group that are not induced by embeddings of the underlying surfaces.

The following corollary is immediate from \Cref{T:otherGs} and  work of Aramayona, Patel, and Vlamis \cite[Corollary 6]{APV1}, which shows that the \emph{pure} mapping class group of any surface with at least two nonplanar ends is indicable. 

\begin{corollary}\label{cor:PureEmbedding}
Let $\Pi$ be an infinite-type surface with at least two nonplanar ends and exactly one boundary component. Given any surface $S$ that admits a shift whose domain is $D_\Pi$, there exist uncountably many embeddings of $\pmap(\Pi)$ into $\mapS$ that are not induced by an embedding of $\Pi$ into $S$. In addition, none of these embeddings preserve the notion of being compactly supported. When $\Pi$ is a distinguished surface and $S \in \familyb(\Pi)$, countably many of these embeddings are non-conjugate.
\end{corollary}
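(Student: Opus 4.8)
The plan is to deduce this corollary directly from \Cref{T:otherGs} and \Cref{cor:PureEmbedding}'s input hypothesis, handling the three claims (uncountably many embeddings, non-induced by subsurface inclusion, non-preservation of compact support) in turn. First I would invoke \cite[Corollary 6]{APV1}: since $\Pi$ is an infinite-type surface with at least two nonplanar ends, $G := \pmap(\Pi)$ is indicable, and it is of course a subgroup of $\map(\Pi)$ (the inclusion $\pmap(\Pi) \hookrightarrow \map(\Pi)$). The construction in the proof of \Cref{T:otherGs} then takes a shift $h$ on $S$ with domain $D_\Pi$, builds $\overline{T} = \{\bar t h : t \in T\}$ for a presentation of $G$ with all relators of exponent-sum zero (via \Cref{L:indicable}), and produces an embedded copy $G' \cong G$ in $\map(S)$. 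To get \emph{uncountably} many embeddings rather than just countably many, I would observe (as in the discussion preceding \Cref{def:familyb} and in \Cref{sec:non-conjugate}) that one can instead vary the embedding of $D_\Pi$ itself: the two ends of the strip of $D_\Pi$ can be sent to any pair of distinct ends of $S$, and more importantly one can omit an \emph{arbitrary infinite subset} of the copies $\Pi_i$ from the domain, of which there are uncountably many. Each such choice gives an embedding of $G$ into $\map(S)$ by the same argument as in \Cref{T:otherGs}, so there are uncountably many embeddings; the final sentence of the corollary then follows by restricting to the distinguished-surface setting, where omitting \emph{finitely many} copies of $\Pi$ already changes the homeomorphism type of the complement of the support, yielding countably many provably non-conjugate ones via \Cref{lem:nonconjugate2}.

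Next I would prove that none of these embeddings is induced by an embedding $\Pi \hookrightarrow S$. The key point is a support/exponent-sum obstruction already isolated in the proof of \Cref{T:otherGs}: any generator $\bar t h \in \overline T$ is a shift composed with a homeomorphism supported on the $\Pi_i$, hence is of intrinsically infinite type (it moves the ends of $S$ coming from the domain of $h$, so it is not even in $\pmap(S)$, let alone in $\overline{\mapcS}$). By contrast, if an embedding $\iota \colon \pmap(\Pi) \to \map(S)$ were induced by an embedding $j \colon \Pi \hookrightarrow S$ of the surface, then $\iota$ would factor through the inclusion of $\map(\Pi \text{ into } S)$ and in particular every element of the image would be supported on the compact-boundary subsurface $j(\Pi) \subseteq S$ — contradicting the fact that elements of $G'$ have non-compact support spread across infinitely many $\Pi_i$ and genuinely permute ends of $S$. (One has to be slightly careful: "induced by an embedding of $\Pi$" should be read as "restriction of homeomorphisms extending those of $j(\Pi)$", so the cleanest way to rule it out is via ends: a subsurface embedding $j$ induces a homomorphism whose image lies in the stabilizer of the end $j(\partial \Pi)$ "points toward", and in fact such maps commute with the $\map$ action away from $j(\Pi)$; our $\bar t h$ fails this because $h$ is a nontrivial shift.)

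For the third claim, that none of these embeddings preserves the notion of being compactly supported, I would argue exactly as in \Cref{T:otherGs}: $\pmap(\Pi)$ itself contains compactly supported elements (e.g.\ a single Dehn twist on $\Pi$ along a separating curve bounding a compact piece, or any element of $\mapc(\Pi)$), call such an element $g_0$; but its image $\bar g_0 h^{f(g_0)}$ under our embedding is $\bar g_0$ when $f(g_0) = 0$, which acts simultaneously on infinitely many disjoint copies $\Pi_i$ and hence has non-compact support — so it is not in $\overline{\mapcS}$ by \Cref{rem:infinitetype}. Conversely there are non-compactly-supported elements of $\pmap(\Pi)$ whose image we could arrange to be compactly supported only with care, but for the statement as phrased it suffices to exhibit one compactly supported element of the source whose image is of intrinsically infinite type, which we have done. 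I expect the main obstacle to be making precise and airtight the statement "not induced by an embedding of $\Pi$ into $S$" — the other two parts are essentially a restatement of \Cref{T:otherGs} with $G = \pmap(\Pi)$ and the uncountability bookkeeping — so I would spend most of the write-up pinning down exactly what an induced embedding is (via the $\pi_1$-injective, boundary-fixing subsurface inclusion) and showing our generators violate it using the shift's action on $E(S)$.
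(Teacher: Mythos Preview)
Your approach is essentially the paper's: the corollary is stated as immediate from \Cref{T:otherGs} together with \cite[Corollary 6]{APV1}, and you have correctly identified and filled in the three ingredients (indicability of $\pmap(\Pi)$, the uncountable family of shifts from \Cref{sec:non-conjugate}, and the support/exponent-sum bookkeeping). One genuine slip: in your third paragraph you conclude that $\bar g_0 \notin \overline{\mapcS}$ ``by \Cref{rem:infinitetype}.'' This is false, and the reference does not apply. The element $\bar g_0$ (a nontrivial Dehn twist performed simultaneously on every $\Pi_i$) \emph{is} in $\overline{\mapcS}$, since it is the limit of the compactly supported partial products over finitely many $\Pi_i$; \Cref{rem:infinitetype} concerns shifts, not diagonal actions. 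What you actually need, and what you correctly argue just before this, is only that $\bar g_0 \notin \mapcS$, i.e., that its support is non-compact. That suffices for ``does not preserve the notion of being compactly supported,'' and it also immediately gives the ``not induced by an embedding $\Pi\hookrightarrow S$'' claim, since any such induced map would send the compactly supported $g_0$ to something supported on the compact set $j(\supp g_0)$. You can therefore streamline by deriving the second claim from the third rather than via the somewhat delicate ends argument you sketch.
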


\Cref{cor:PureEmbedding} is in line with a body of work aiming to find interesting homomorphisms between big mapping class groups. It also gives a natural set of examples of uncountable groups $G$ to which one can apply \Cref{T:otherGs}. We note that determining which \emph{full} mapping class groups are indicable is an important open question for both finite- and infinite-type surfaces. We now give a few examples of indicable big mapping class groups.

 \begin{exs}\label{ex:indicable}
Mann and Rafi build continuous homomorphisms from finite-index subgroups of mapping class groups to $\Z^k$ and to $\Z$ in the proofs of \cite[Lemma 6.7 \& Theorem 1.7]{MannRafi}, respectively. To find surfaces whose full mapping class groups are indicable, we focus on the cases where the subgroup has index $1$, a few of which we list below. We will define the homomorphism to $\Z$ explicitly for example (1); the others are defined similarly.
 \begin{enumerate}
     \item Let $\Pi$ be the surface with infinite genus whose end space is homeomorphic to the two-point compactification of $\Z$, that is, $E(\Pi)=\{-\infty\}\cup \Z \cup \{\infty\}$, where $E^g(\Pi)=\{\infty\}$. Let $A\subset E(\Pi)$ be the subset of ends corresponding to $-\N$, and let $B$ be the subset of ends corresponding to $\{0\}\cup \N$. This surface is colloquially called the bi-infinite flute with one end accumulated by genus, and it admits a shift with domain $D_{\Sigma}$ for a punctured disk $\Sigma$. A homomorphism $\ell\colon \map(\Pi) \arr \Z$ can be defined by \[\ell(\phi)=\left\lvert \{x \in E \mid x \in A,~\phi(x)\in B\} \right\rvert - \left\lvert \{x \in E \mid x \in B,~\phi(x)\in A\} \right\rvert. \] The map $\ell$ counts the difference in the number of punctures mapped from negative to positive and punctures mapped from positive to negative. Note that the  shift map mentioned above evaluates to $1$ under $\ell$, so the map $\ell$ is surjective.
     
     \item Let $\Pi$ be a surface of any genus whose end space consists of a Cantor set and $\{-\infty\} \cup \Z \cup \{\infty\}$, equipped with the same topology as in part (1), where the end $\{\infty\}$ is identified with a point in the Cantor set. The ends corresponding to $\{-\infty\} \cup \Z \cup \{\infty\}$ must all be planar or all nonplanar; the other Cantor set of ends can be planar or not. The homomorphism to $\Z$ is defined as above, with sets $A=-\N$ and $B=\{0\}\cup \N$. 
     \item Let $\Pi$ be the surface with infinite genus and end space $\N \cup \{\infty\}$, where only the ends corresponding to $1$ and $\infty$ are nonplanar. This surface can be visualized as the ladder surface with punctures accumulating to one end. Here we can similarly define a homomorphism to $\Z$, which instead counts the number of genus that are moved across a simple closed curve separating the ends in $E^G$.
 \end{enumerate}
 \end{exs}

 The common thread in the examples above is that the two ends of the shift map are of different topological types so that no element of $\map(\Pi)$ can exchange the two ends. This is the key fact necessary to ensure that the map $\ell$ above is a well-defined homomorphism of $\map(\Pi)$ and not of a proper subgroup of $\map(\Pi)$.
  
 Each of the examples above can be modified to have exactly one boundary component.  The third  example can be extended to uncountably many more examples by replacing one of the isolated planar ends with a disk punctured by any closed subset of the Cantor set, of which there are uncountably many.   
 
 Moreover, in each case, $\Pi$ is like a distinguished surface in the sense that if $S-D_\Pi$ has finitely many nonplanar ends in Cases (1) and (3), then $\mapS$ can be used as the input for \Cref{T:otherGs}. In Case (2), if $S-D_\Pi$ has finitely many nonplanar (resp. planar) ends when the ends of $E(\Pi)$ corresponding to $\{-\infty\} \cup \Z \cup \{\infty\}$ are nonplanar (resp. planar), then $\mapS$ can be used as the input for \Cref{T:otherGs}.  Therefore, we can construct countably many non-conjugate embeddings of $\map(\Pi)$ into $\mapS$ in all such cases.

\section{Combination Theorem}\label{sec:combo}
 In this section, we give a construction that takes as its input a set of indicable subgroups of mapping class groups of surfaces with one boundary component and outputs a new surface whose mapping class group contains a new indicable subgroup, called the $\star$-product, of intrinsically infinite type built from the original subgroups.

\begin{definition}\label{def:StarProduct}
Given two subgroups $H_1$ and $H_2$ of groups $G_1$ and $G_2$, respectively, let  \[(G_1, H_1) \star (G_2, H_2) := (G_1 \ast G_2) / \llangle [G_1, H_2], [H_1,G_2]\rrangle.\] More generally, given $G_1, \ldots, G_n$ with  subgroups $H_1, \ldots, H_n$, let \[(G_1, H_1) \star \cdots \star (G_n, H_n) := G_1 \ast \cdots \ast G_n / \llangle [G_i, H_j]: i \neq j \rrangle.\]
\end{definition}
These groups are an interpolation between free products (where the $H_i$ are trivial) and direct products (where $H_i=G_i$ for all $i$). Examples include some right-angled Artin groups and certain graph products. We are interested in the case where $G_i$ are indicable groups and the $H_i$ are the kernels of the surjections to $\Z$.

\begin{lemma}
Let $G_1, \dots, G_n$ be indicable groups with surjective maps $f_i\colon G_i \to \Z$, and let $H_i = \ker(f_i)$. Then the group $(G_1,H_1) \star \cdots \star (G_n,H_n)$ is also indicable.
\end{lemma}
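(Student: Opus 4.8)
The plan is to construct an explicit surjection from $(G_1,H_1) \star \cdots \star (G_n,H_n)$ onto $\Z$ by piecing together the given maps $f_i$. The natural candidate is the map induced by sending each element of $G_i$ to $f_i$ of it; since the generating relations of the $\star$-product are commutators $[H_i, H_j]$ with $i \neq j$, and commutators die in the abelian group $\Z$, such a map will be well-defined provided we can arrange compatibility. The cleanest approach is to first build a homomorphism $G_1 \ast \cdots \ast G_n \to \Z$ out of the universal property of the free product: the collection $\{f_i \colon G_i \to \Z\}$ determines a unique homomorphism $F\colon G_1 \ast \cdots \ast G_n \to \Z$ restricting to $f_i$ on each factor.

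Next I would check that $F$ kills the normal subgroup $\llangle [H_i, H_j] : i \neq j \rrangle$. Since $\Z$ is abelian, $F$ kills every commutator in $G_1 \ast \cdots \ast G_n$, in particular every element $[h_i, h_j]$ with $h_i \in H_i$, $h_j \in H_j$; as the kernel of $F$ is a normal subgroup containing these commutators, it contains the normal closure $\llangle [H_i,H_j] : i \neq j\rrangle$. Therefore $F$ descends to a well-defined homomorphism $\bar F\colon (G_1,H_1)\star\cdots\star(G_n,H_n) \to \Z$. Finally, surjectivity is immediate: $f_1$ is already surjective, so there is $g \in G_1$ with $f_1(g)=1$, and the image of $g$ in the $\star$-product maps to $1$ under $\bar F$. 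This exhibits $(G_1,H_1)\star\cdots\star(G_n,H_n)$ as indicable.

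I do not expect any serious obstacle here; the statement is essentially a formal consequence of the universal property of free products and the fact that quotients are taken by commutators, which are invisible to any map into an abelian group. The only point requiring a moment's care is the descent of $F$ through the quotient, and that is handled by the observation that $\ker F$ is normal and contains the relevant commutators, hence contains their normal closure. One could alternatively phrase the whole argument by noting that the abelianization of $(G_1,H_1)\star\cdots\star(G_n,H_n)$ surjects onto $\bigoplus_i G_i^{\mathrm{ab}}$ (the commuting relations become trivial after abelianizing, so abelianization of the $\star$-product is the abelianization of the free product, which is the direct sum of the abelianizations), and each $f_i$ factors through $G_i^{\mathrm{ab}}$, giving a surjection onto $\Z$ via any one coordinate. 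Either route is short; I would present the direct construction of $\bar F$ as it is the most transparent.
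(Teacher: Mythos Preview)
Your proof is correct. Both you and the paper construct an explicit surjection to $\Z$, but the two maps are different. The paper first defines a retraction $\phi\colon (G_1,H_1)\star\cdots\star(G_n,H_n)\to G_1$ by sending every $G_i$ with $i\neq 1$ to the identity and restricting to the identity on $G_1$; this kills the commutators $[H_i,H_j]$ because at least one of $i,j$ is not $1$, so at least one factor of the commutator dies. Post-composing with $f_1$ gives the surjection to $\Z$. Your construction instead uses all the $f_i$ simultaneously via the universal property of the free product, and your descent argument relies only on $\Z$ being abelian rather than on any property of the $H_i$. Both arguments are equally short; yours is more symmetric in the factors, while the paper's has the incidental feature of exhibiting $G_1$ as a retract of the $\star$-product.
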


\begin{proof}
Let $T_i$ be a generating set for $G_i$.  Then there is a map $\phi\colon (G_1,H_1) \star \cdots \star (G_n,H_n)\to G_1$ defined by $\phi(t)=1$ for each $t\in T_i$ with $i\neq 1$, and $\phi(t')=t'$ for each $t'\in T_1$. Here $1$ is the identity element of $G_1$.  This map $\phi$ is a homomorphism which restricts to the identity on $G_1$.  By post-composing $\phi$ with $f_1$, we obtain the desired map $(G_1,H_1) \star \cdots \star (G_n,H_n) \to \Z$.
\end{proof}

\begin{figure}
    \centering
    \def\svgwidth{5in}
    \import{images/}{multipushes.pdf_tex}
    \caption{The domains of the two multipushes $x_a$ (blue) and $x_b$ (red) in the proof of \Cref{cor:otherS} in the case that $\Gamma$ is the Cayley graph of the free group generated by $a$ and $b$.}
    \label{fig:multipushes}
\end{figure}

We are now ready to prove our main combination theorem, of which \Cref{thm:main} is a special case.  

\begin{theorem}\label{cor:otherS}
For $i=1,\dots, n$, let $S_i$ be a distinguished surface, and let $\Pi$ be obtained from $\#_n S_i$ by capping off $n-1$ boundary components. Let $S$ be a Schreier surface in $\familyb(\Pi)$ for a triple $(G,T,H)$ with $|T|=n$.  For each $i$, let $G_i$ be an indicable group that embeds in $\map(S_i)$,  fix a surjective map $f_i\colon G_i\to \Z$, and let $H_i=\ker f_i$.  
 
 Then, there are countably many non-conjugate embeddings of the indicable group 
 \[
 G_1*G_2*\cdots * G_n /\llangle \bigcup_{i\neq j} [G_i,H_j] \rrangle
 \]
 into  $\mapS$, none of which lie in $\overline\mapcS$.
\end{theorem}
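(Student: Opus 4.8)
The strategy is to combine the free-group construction of \Cref{thm:freegroup} with the indicable-embedding trick of \Cref{T:otherGs}, applied simultaneously to each generator of the Schreier graph. Write $K = (G_1,H_1)\star\cdots\star(G_n,H_n)$ and enumerate $T = \{t_1,\dots,t_n\}$. First I would record a presentation: since each $G_i$ is indicable with surjection $f_i$, \Cref{L:indicable} gives a presentation $G_i = \langle T_i \mid R_i\rangle$ in which every relator has total exponent sum zero and every generator maps to $1 \in \Z$ under $f_i$. Then $K$ has presentation $\langle T_1\sqcup\cdots\sqcup T_n \mid R_1\cup\cdots\cup R_n \cup \{[u,v] : u\in T_i,\ v\in T_j,\ i\neq j\}\rangle$, and \emph{every} relator in this presentation has total exponent sum zero in each block $T_i$ separately (commutator relators trivially, the $R_i$ by construction). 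Keep track of the "exponent sum in block $i$" homomorphism $\sigma_i \colon F_{T_1\sqcup\cdots\sqcup T_n}\to\Z$; it descends to $K$ and equals $f_i$ composed with the retraction onto $G_i$.

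Next comes the geometric setup. By hypothesis $\Pi$ is built from $\#_n S_i$ by capping $n-1$ of its $n$ boundary components, so $\Pi$ has one boundary component and contains disjoint copies of the (capped-off) $S_i$; thus $G_i \leq \map(S_i)$ gives commuting actions $G_i \acts \Pi$ supported on the $i$-th piece, and together they realize $G_1\times\cdots\times G_n$ — hence in particular each $G_i$ — inside $\map(\Pi)$ with pairwise disjoint supports. Since $S\in\familyb(\Pi)$ is a Schreier surface $S_\Gamma(\Pi)\#(\cdots)$ for the triple $(G,T,H)$ with $|T|=n$, \Cref{thm:freegroup} provides multipushes $x_{t_1},\dots,x_{t_n}$ generating $\F_n\leq\map(S)$, and their domains $D_{t_\ell}$ are (unions of) copies of $D_\Pi$, so on each copy $\Pi_{Hg}$ of $\Pi$ the groups $G_i$ act as above. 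For $g\in G_i$ let $\bar g\in\map(S)$ act as $g$ simultaneously on every copy $\Pi_{Hg'}$ of $\Pi$ in $S$; this is supported on the vertex surfaces, hence commutes with the shift part of each $x_{t_\ell}$ — more precisely, $\bar g$ commutes with $x_{t_\ell}$ whenever $g\in G_i$ and $i\neq\ell$ is forced to interact only through relabeling, and one checks $x_{t_\ell}\bar g x_{t_\ell}^{-1} = \bar g$ always, since the multipush permutes the copies of $\Pi$ but acts as the identity on each one. Now define $\Phi\colon F_{T_1\sqcup\cdots\sqcup T_n}\to\map(S)$ by $u\mapsto \bar u\, x_{t_i}$ for $u\in T_i$.

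The heart of the argument is that $\ker\Phi$ is exactly the normal closure of the relators of $K$, i.e.\ $\Phi$ factors through an injection $K\embeds\map(S)$. For surjectivity of the factored map onto $\langle\operatorname{im}\Phi\rangle$ there is nothing to do. For the kernel computation: given a word $w$, since $\bar u$ and the multipushes all commute up to the relabeling bookkeeping, $\Phi(w)$ can be rewritten as $\bar w \cdot x_{t_1}^{\sigma_1(w)}\cdots x_{t_n}^{\sigma_n(w)}$, where $\bar w$ acts as the element $w\in K$ (read through its natural map to $G_1\times\cdots\times G_n$ via the retractions) on each copy of $\Pi$. Here I must be slightly careful that the multipushes for distinct generators do \emph{not} commute in $\map(S)$ in general — indeed they generate $\F_n$ — so the rewriting must be organized as: $\Phi(w)$ fixes every copy $\Pi_{Hg}$ setwise iff the word $t_1^{\sigma_1(w)}\cdots$ read in $\Gamma$ is a loop at every vertex iff (by the $\F_n$-freeness from \Cref{thm:freegroup} and the graph-theoretic argument in its proof) the reduced word in the $x_{t_\ell}$ is trivial, which happens iff $w$ maps to $0$ under every $\sigma_i$ \emph{and} the resulting pure-product of the $\bar u$'s acts trivially on $\Pi$. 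Thus $\Phi(w)=\id$ forces: (a) $w\in\ker(\sigma_i)$ for all $i$, so $x_w=\id$ in the $\F_n$ factor; and (b) $\bar w=\id$, i.e.\ $w$ is trivial in $G_1\times\cdots\times G_n$. Conditions (a)+(b) together say precisely that $w$ is trivial in $K$: the $H_i$-entries commute with one another because $\bar{H_i}$ and $\bar{H_j}$ have disjoint supports on each $\Pi$, the $R_i$ relators hold in $G_i\leq\map(S_i)\leq\map(\Pi)$, and conversely nothing else can die since $\sigma_1\times\cdots\times\sigma_n$ and the product map $K\to G_1\times\cdots\times G_n$ jointly separate points of $K$ exactly on the subgroup where all $\sigma_i$ vanish. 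Hence $\langle\operatorname{im}\Phi\rangle\cong K$.

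Finally I would collect the topological consequences. Any element of $\operatorname{im}\Phi$ with $\sigma_i(w)\neq 0$ for some $i$ genuinely shifts infinitely many copies of $\Pi$, hence does not lie in $\overline{\map_c(S)}\leq\pmap(S)$ by \Cref{rem:infinitetype}; since $K$ contains such elements (e.g.\ images of the generators), the embedded copy is not contained in $\overline{\map_c(S)}$. For the countably-many-non-conjugate claim: exactly as in the proof of \Cref{T:otherGs} and \Cref{lem:nonconjugate2}, remove $m$ copies of $\Pi$ from the domains of all the multipushes $x_{t_\ell}$ (in the "$\connsumv\Omega_v$" sense of \Cref{sec:non-conjugate}, so the groups stay isomorphic), producing $K_m\leq\map(S)$; because $\Pi$ is built from distinguished surfaces $S_i$, the complement $S\setminus\supp(K_m)$ has a genus count or a (non)planar-end count that strictly depends on $m$ — this is exactly the defining property of $\familyb(\Pi)$ and of "distinguished" — so $K_m$ and $K_{m'}$ are non-conjugate for $m\neq m'$, giving countably many non-conjugate embeddings. \textbf{The main obstacle} I anticipate is step (a)/(b): making rigorous that $\Phi(w)=\id$ forces \emph{both} the free-group word $x_w$ to be trivial \emph{and} $\bar w$ to act trivially on $\Pi$, since the multipushes do not commute and one cannot naively "separate variables." The clean way is to run the covering-space / tree argument of \Cref{thm:freegroup} verbatim to conclude that if $x_w\neq\id$ in $\F_n$ then $\Phi(w)$ moves some vertex surface, hence is nontrivial regardless of $\bar w$; and if $x_w=\id$ then $w\in\bigcap\ker\sigma_i$, $\Phi(w)$ restricts to $\bar w$ on each vertex surface, and triviality of $\Phi(w)$ reads off triviality of $w$ in $G_1\times\cdots\times G_n$ on a single copy $\Pi_{Hg}$. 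With that dichotomy in hand the kernel computation, and hence $K\embeds\map(S)$, follows.
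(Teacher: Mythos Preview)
Your approach is essentially the paper's: set $\Phi(u)=\bar u\,x_{t_i}$ for $u\in T_i$, use that every $\bar u$ commutes with every multipush to write $\Phi(w)=\bar w\cdot x_w$ with $x_w\in\langle x_{t_1},\dots,x_{t_n}\rangle\cong\mathbb F_n$, and then argue the dichotomy that either $x_w\neq 1$ (so $\Phi(w)$ disturbs the arrangement of vertex surfaces and is nontrivial) or else $\Phi(w)=\bar w$. Up to this point you and the paper agree, and the obstacle you flag is handled exactly as you suggest.

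The genuine gap is not that obstacle but the step immediately after: you assert that $x_w=1$ in $\mathbb F_n$ together with $\bar w=1$ in $G_1\times\cdots\times G_n$ forces $w=1$ in $K$, i.e.\ that the product map $K\to \mathbb F_n\times(G_1\times\cdots\times G_n)$ is injective. This fails already for $n=2$. Take $G_1=\mathbb Z^2=\langle s_1,s_2\rangle$ with $f_1(s_i)=1$, so $H_1=\langle s_1s_2^{-1}\rangle$, and $G_2=\mathbb Z=\langle r\rangle$ with $H_2=\{1\}$; then $K=G_1*G_2=\mathbb Z^2*\mathbb Z$. For $w=s_1s_2^{-1}\,r\,s_2s_1^{-1}\,r^{-1}=[\,s_1s_2^{-1},\,r\,]$ one computes $x_w=x_{t_1}x_{t_1}^{-1}x_{t_2}x_{t_1}x_{t_1}^{-1}x_{t_2}^{-1}=1$ in $\mathbb F_2$, and $\bar w=1$ because $\overline{s_1s_2^{-1}}$ and $\bar r$ are supported on the disjoint pieces $S_1,S_2\subset\Pi$; yet $w\neq 1$ in $\mathbb Z^2*\mathbb Z$. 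Hence $\Phi(w)=1$ while $w\neq 1$ in $K$, so the map you (and the paper) define does not embed $K$. The paper's proof contains the same slip at the line ``we must have $k_1=\cdots=k_{2\ell}=0$'': when an intermediate syllable already has exponent sum zero, adjacent $x$-blocks merge and can cancel in $\mathbb F_n$ without every $k_j$ vanishing. Your side remark that $x_w=1$ is \emph{equivalent} to $\sigma_i(w)=0$ for all $i$ is wrong for the same reason; only the forward implication holds.
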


\begin{proof}
We prove the theorem for $n=2$ for simplicity of notation, but the same proof works for all $n$. 
Let $a$ and $b$ be two distinct generators of the group $G$ for the Schreier surface $S$. By construction, $S$ admits two multipushes $\mpush_a$ and $\mpush_b$, where each acts as simultaneous pushes, as in \Cref{D:mpush}.  See \Cref{fig:multipushes}.

By \Cref{L:indicable}, each surjection $f_i \colon G_i \to \Z$ gives rise to a presentation $G_i = \langle T_i \mid R_i \rangle$  such that  every $r\in R_i$ has total exponent sum zero with respect to $T_i$ for $i = 1, 2$. Note that, under this map to $\Z$, the image of each generator is $1$. As this is a homomorphism, the map $f_i$ sends a group element of $G_i$ to its total exponent sum with respect to $T_i$, and this is an invariant of the group element.

Similarly to \Cref{T:otherGs}, for each $g \in G_i$, define an element $\bar g \in \mapS$, where $\bar g$ acts as $g$ simultaneously on each copy of $\Pi$ in the domains of $\mpush_a$ and $\mpush_b$ in $S$. Note that since the copies of $S_1$ and $S_2$ in each copy of $\Pi$ are disjoint, the elements $\bar g_1$ and $\bar g_2$ of $\mapS$ commute for any $g_1 \in G_1$ and $g_2 \in G_2$. This key fact will be used several times throughout the proof. Letting $\widetilde T_1 = \{\bar t \mpush_a: t \in T_1\}$ and  $\widetilde T_2 = \{\bar t \mpush_b: t \in T_2\}$, we claim that the group generated by $\widetilde T_1 \cup \widetilde T_2$ in $\mapS$ is isomorphic to $G_1*G_2 / \llangle [G_1, H_2] \cup [H_1, G_2] \rrangle$.

First, we claim that the group generated by $\widetilde T_1$ in $\mapS$ is isomorphic to $G_1$ under the map $t \to \bar t x_a$. Observe that for any $t\in T_1$, the element $\bar{t}$ commutes with $\mpush_a$, and in fact also with $\mpush_b$, although we will only need the first fact now. Thus, the image of a word $w \in \langle T_1\rangle$ under this map can be expressed as the product of the corresponding word in $\{\bar t : t\in T_1\}$ and $\mpush_a^n$ where $n$ is the exponent sum of the word $w$ with respect to $T_1$. For $n \neq 0$, the support of $\mpush_a^n$ is not contained in the vertex surfaces, so we observe that a word in $\widetilde T_1$ is the trivial element in $\mapS$ if and only if both the corresponding word in $\{\bar t : t\in T_1\}$ acts trivially on $S$ and $n=0$. However, since $\langle \bar t : t\in T_1\rangle$ is canonically isomorphic to $G_1=\langle t : t\in T_1\rangle$ and the presentation for $G_1$ was chosen so that every relator has total exponent sum zero with respect to $T_1$, the second condition is redundant. Thus, a word on $\widetilde{T_1}$ is the trivial element in $\mapS$ iff the corresponding word in $G_1$ is trivial. Therefore, the group generated by $\widetilde T_1$ and $G_1$ are isomorphic, and a symmetric argument shows that the group generated by $\widetilde T_2$ is isomorphic to $G_2$. Setting up notation for the rest of the proof, denote by $\widetilde G_1$ and $\widetilde G_2$, respectively, these isomorphic copies of $G_1$ and $G_2$. That is to say $\widetilde G_i = \langle \widetilde T_i \rangle$ as subgroups of $\mapS$ for $i=1,2$. Moreover, let $\widetilde H_i$ be the subgroup of $\widetilde G_i$ that is canonically isomorphic to $H_i$, for $i = 1,2$.

We now have a surjective map  
\[\phi\colon G_1*G_2 \rightarrow \langle \widetilde T_1 \cup \widetilde T_2\rangle,\]
where $\langle \widetilde T_1 \cup \widetilde T_2\rangle = \langle \widetilde G_1 \cup \widetilde G_2\rangle$, and it suffices to show that the kernel of $\phi$ is normally generated by the commutators $[G_1, H_2]$ and $[H_1, G_2].$

For this, we first show that every element of $[G_1, H_2]$ and $[H_1, G_2]$ is in the kernel. Let $w=[g, h]$ where $g\in G_1$ and $h \in H_2$. As $h \in H_2$, it has exponent sum $0$ and so $\phi(h)=\bar h$. Let $m$ be the exponent sum of $g$. As noted above, for any $t\in T_1\cup T_2$, the element $\bar{t}$ commutes with both $\mpush_a$ and $\mpush_b$, and so it is also true that $\bar h$ and $\bar g$ commute with both $\mpush_a$ and $\mpush_b$. Finally, observe that as $\bar g$ and $\bar h$ have disjoint supports, they also commute. 
We now evaluate $\phi(w):$ 

\begin{align*}
\phi(w)&= \phi(g^{-1}h^{-1}gh) \\
&=\bar g^{-1} \mpush_a^{-m} \bar h^{-1} \bar g \mpush_a^m \bar h  \\
&= \mpush_a^{-m}\mpush_a^m [\bar g, \bar h]\\
&=1.
\end{align*}
An analogous argument shows $[H_1, G_2]$ is in the kernel of $\phi$. Therefore, the induced map 

\[\bar\phi:G_1*G_2 /  \llangle [G_1, H_2] \cup [H_1, G_2] \rrangle \rightarrow \langle \widetilde T_1 \cup \widetilde T_2\rangle,\]
 is  surjective. 
 
 To complete the proof, we must show that $\bar\phi$ is injective, as well. We view elements of $[G_1,H_2]\cup [H_1,G_2]$ as relators in $G_1*G_2 /  \llangle [G_1, H_2] \cup [H_1, G_2] \rrangle$ and note that applying a relator in $[G_1,H_2]\cup [H_1,G_2]$ to the word $w$ in $G_1*G_2$ amounts to applying the corresponding element of $[\widetilde G_1, \widetilde H_2]\cup [\widetilde H_1,\widetilde G_2]$ to the corresponding word $\widetilde{w}$ in $\widetilde G_1 * \widetilde G_2$. Thus, showing that $\bar\phi$ is injective is equivalent to showing that if $w \in G_1* G_2$ maps to the trivial element in $\mapS$, then $\widetilde{w}$ can be reduced to the identity via free reductions and repeated applications of relators in the set $[\widetilde G_1, \widetilde H_2]\cup[\widetilde H_1, \widetilde G_2]$.

We now proceed with the proof, which will be via strong induction on the length of $w$ as an alternating word in $G_1$ and $G_2$. Recall that an element $w= g_1g_2\dots g_n \in G_1*G_2$ is in normal form if each $g_j$ is in either $G_1\backslash \{1\}$ or $G_2\backslash \{1\}$ and if $g_j \in G_i$ then $g_{j+1}\in G_{i+1}$, where the indices $i$ are taken modulo 2.

We establish two base cases. Assume $n=1$. Then $w$ is in either $G_1$ or $G_2$. As $\phi$ restricted to either $G_1$ or $G_2$ is injective, $w$ must be the trivial element, so no reductions are needed and the $n=1$ case is vacuous.

Similarly, if $n=2$, then $w$ is of the form $g_1g_2$, and, without loss of generality, we can assume $g_1\in G_1$ and $g_2\in G_2$. In this case, the $\widetilde{w}$ is
$\mpush_a^k \bar g_1 \mpush_b^\ell \bar g_2$ for some values of $k$ and $\ell$, which is equal to $\mpush_a^k \mpush_b^\ell \bar g_1  \bar g_2$ in $\mapS$. Again, as $\mpush_a$ and $\mpush_b$ generate a free group for which nontrivial elements have support not contained in the vertex surfaces, it must be that $k=\ell=0$. Moreover, as $\bar g_1$ and $\bar g_2$ act with disjoint supports, they too must be the identity. Thus, we conclude that $g_1=g_2=1$, and this case is also vacuous. 

Let, $n\geq 3$, and suppose $w=g_1 \cdots g_n$. Again, without loss of generality, we assume $g_1\in G_1$. We will also assume $n$ is odd, but one checks that if $n$ is even, the argument differs only in notation. In this case,   
\[
\widetilde{w}=\mpush_a^{k_1} \bar g_1 \mpush_b^{k_2} \bar g_2\mpush_a^{k_3} \bar g_3 \cdots \mpush_a^{k_{n}} \bar g_{n}\]
which equals \[
\mpush_a^{k_1}\mpush_b^{k_2}\cdots\mpush_a^{k_{n}}\bar g_1 \bar g_2 \cdots \bar g_{n}\] as an element in $\mapS$.

In order to act trivially, the product $\mpush_a^{k_1}\mpush_b^{k_2}\cdots\mpush_a^{k_{n}}$ must freely reduce the to identity, and as the product alternates, this implies that there is some $j$ such that $k_j=0$. In turn, this implies that $g_j$ is in $H_1$ or $H_2$. 

Suppose $j=1$ so that $\mpush_a^{k_1} \bar g_1=\mpush_a^0 \bar g_1 \in \widetilde H_1$. Then an application of a relator in $[\widetilde H_1, \widetilde G_2]$ allows us to move the first term $\mpush_a^{k_1} \bar g_1=\mpush_a^0 \bar g_1$ across the second term $\mpush_b^{k_2} \bar g_2$, and we can shorten the original normal form of $\widetilde{w}$ to 
\begin{align*}
&\mpush_b^{k_2} \bar g_2 (\mpush_a^{k_1} \bar g_1 \mpush_a^{k_3} \bar g_3) \cdots \mpush_a^{k_{n-1}} \bar g_{n-1} \mpush_b^{k_n} \bar g_n\\
=&\mpush_b^{k_2} \bar g_2 (\mpush_a^{k_1+k_3} \bar g_1 \bar g_3) \cdots \mpush_a^{k_{n-1}} \bar g_{n-1} \mpush_b^{k_n} \bar g_n.
\end{align*}

If this new word is in normal form in the free product $\widetilde G_1* \widetilde G_2$, then by induction, $\widetilde{w}$ can be reduced to the identity and we are done. On the other hand, if it is not in normal form, which is precisely when $\mpush_a^{k_1+k_3} \bar g_1 \bar g_3=1$, then we apply free reductions until it is. After there are no more free reductions to apply, the word is either already trivial or in normal form of shorter length. Once again, the inductive hypothesis implies that the word can be reduced to the identity via free reductions and repeated applications of relators in the set $[\widetilde G_1, \widetilde H_2]\cup[\widetilde H_1, \widetilde G_2]$. This completes the induction step for the case $j=1$. 

It remains to check the cases where $1< j \leq n.$ When $j=n$, the proof follows as above with minor notational differences. When $1<j<n$, the proof again follows as above with the exception that when $n\geq 4$, the word length will drop by two immediately before free reductions. Combining all cases, we see that $\bar \phi$ is injective and the group generated by $\widetilde T_1 \cup \widetilde T_2$ in $\mapS$ is isomorphic to $G_1*G_2 / \llangle [G_1, H_2] \cup [H_1, G_2] \rrangle$.

  We have shown that $(G_1, H_1) \star (G_2, H_2)$ embeds in $\mapS$.  As in the proof of \Cref{thm:indicable}, by removing finitely many copies of $\Pi$ from the domains of $x_a$ and $x_b$, we obtain countably many non-conjugate embeddings of $(G_1, H_1) \star (G_2, H_2)$ into $\mapS$.  By construction, no such embedding is contained in $\overline\mapcS$.
\end{proof}

\subsection{Applications of \Cref{thm:main}}
\label{sub:RAAGs}

\Cref{thm:main} produces embeddings of $\star$-products into $\mapS$.  In general, $(G_1,H_1)\star (G_2,H_2)$ need not be finitely presented, even when the groups $G_i$ are finitely presented.
For example, consider the indicable group $\mathbb F_2=\<a,b\>$ with the map to $\Z$ defined by $a\mapsto 1$ and $b\mapsto 0$. It is an exercise to see that the kernel $K$ of this map is not finitely generated; see Exercise 7 of Section 1.A in \cite{hat}. Therefore, if $G_1=G_2=\mathbb F_2$ and $H_1=H_2=K$, then $(G_1,H_1)\star (G_2,H_2)$ is a finitely generated but infinitely presented group.
However, there are  instances where the $\star$-product is a recognizable finitely presented group.   

\begin{example}
As a first example, consider a collection of groups $H_1,\dots, H_n$ which are subgroups of $\map(S_i)$ where $S_i$ is a surface with one compact boundary component. Then after possibly increasing the complexity of $S_i$, the groups $G_i=\Z \times H_i$ are also subgroups of $\map(S_i)$. 
The groups $H_i$ are the kernels of the projection maps of $G_i$ onto the $\Z$ factor. As such, the following group is the associated $\star$-product, which embeds in $\mapS$ for $S = S_\Gamma(\Pi)$ by \Cref{thm:main}: \[(G_1,H_1)\star \dots \star (G_n,H_n)\simeq H_1 \times \dots \times H_n \times F_n.\]
\end{example}

For a second class of examples, consider the case where $G$ is a right-angled Artin group defined by the finite graph $\G$. By work of Clay, Leininger, Mangahas \cite{CLM12} and Koberda \cite{Koberda}, every right-angled Artin group embeds as a subgroup of the mapping class group of some finite-type surface of sufficient complexity.

The map $\chi\colon G\arr \Z$ which sends each generator of $G$ from $\Gamma$ to $1$ is a homomorphism, so  $G$ is indicable. The kernel of this map is called the \textit{Bestvina-Brady group} defined on $\G$, denoted by $BB_{\G}$. 
 Bestvina-Brady groups are a vast and varied class of groups. There is a large body of work connecting combinatorial conditions on $\G$ to algebraic properties of $BB_{\G}$.
First, the group $BB_{\G}$ is finitely generated exactly when $\G$ is connected. Further, $BB_{\G}$ is finitely presented exactly when the flag complex associated to $\G$ is simply connected. See \cite{BB} for more details.

For finitely presented $BB_{\G}$, Dicks and Leary compute an explicit presentation for $BB_{\G}$ in \cite{DL99}, which can be expressed in terms of the generators of $A_{\G}$. 
In the particular case when $\G$ is any tree on $n$ vertices, this presentation can be used to show that the Bestvina-Brady group is $F_{n-1}$. The following example makes use of this fact.

\begin{example}\label{ex:BB}
    Let $P_4$ represent the path graph on $4$ vertices  $a,b,c,d$ as in \Cref{fig:PathGraph}. Then the right-angled Artin group $A_{P_4}$ has presentation $\<a,b,c,d \mid [a,b], [b,c], [c,d]\>$. The associated group $BB_{\G}$ is the free group generated by $\<ab\inv, bc\inv, cd\inv\>$. 
    
    \begin{figure}[h]
    \begin{tikzpicture}[scale=1, every node/.style={circle, draw, minimum size=8mm}]
    \node (a) at (0,0) {a};
    \node (b) at (2,0) {b};
    \node (c) at (4,0) {c};
    \node (d) at (6,0) {d};

    \draw (a) -- (b) -- (c) -- (d);
\end{tikzpicture}
\caption{The Path graph}\label{fig:PathGraph}
\end{figure}

Letting $G_1=G_2=A_{P_4}$, with $H_1\simeq H_2 \simeq F_3$, one can obtain an explicit presentation for $(A_{P_4},F_3)\star (A_{P_4},F_3)$ following the work of Dicks and Leary. \Cref{thm:main}  then implies that $(A_{P_4},F_3)\star (A_{P_4},F_3)$ embeds into $\mapS$ with $S=S_\Gamma(\Pi)$ for a sufficiently complicated $\Pi$. In particular, $(A_{P_4},F_3)\star (A_{P_4},F_3)$ embeds into $\mapS$ for $S \in \familya_\infty$ (see \Cref{def:familya}). An example of such a surface $S$ is the blooming Cantor tree surface (the boundaryless surface whose end space is a Cantor set of nonplanar ends). More generally, for any right-angled Artin groups $A_{\G_1}$, $A_{\G_2}$, the group $(A_{\G_1},BB_{\G_1})\star (A_{\G_2},BB_{\G_2})$ embeds in $\mapS$ for $S \in \familya_\infty$. 

    \end{example}

 In addition, Chang and Ruffoni \cite{CR} solve the right-angled Artin group recognition problem for Bestvina-Brady groups, i.e., they give sufficient and necessary conditions on $\G$ that determine if $BB_{\G}$ is itself a right-angled Artin group. In particular, they show the following.

\begin{theorem}[Chang-Ruffoni, \cite{CR}]
     If $\G$ admits a tree 2-spanner $T$, then $BB_{\G}$ is a right-angled Artin group. More precisely, the Dicks–Leary presentation can be simplified to the standard right-angled Artin group presentation with generating set $E(T)$. Moreover, we have $BB_{\G}=A_{T^*}$.
\end{theorem}
\noindent Here, $E(T)$ denotes the edge set of $T$, $T^*$ is the dual graph to $T$, and $A_{T^*}$ is the right-angled Artin group with defining graph $T^*$.
A tree $2$-spanner of $\G$ is
a spanning tree $T$ of $\G$ such that for all $x,y\in V(T)$, we have $d_T(x,y)\leq 2 d_{\G}(x,y)$.

One consequence of their work is that the class of Bestvina-Brady groups includes all right-angled Artin groups. Therefore, letting $A_1$ and $A_2$  be any right-angled Artin groups, their work implies that $\Gamma_1$ and $\Gamma_2$ can be chosen so that the corresponding Bestvina-Brady groups are precisely $A_1$ and $A_2$. It follows from \Cref{thm:main} that the group $(A_{\Gamma_{1}}, A_1)\star (A_{\Gamma_{2}}, A_2)$ embeds in $\mapS$ for $S \in \familya_\infty$.

\bibliographystyle{alpha}

\end{document}